\documentclass[12pt]{amsart}
\usepackage{fullpage,tikz,scalerel}
\usepackage[utf8]{inputenc}
\usepackage{color,amsfonts,amsmath,amssymb,amsthm,cases,wasysym}
\usepackage[normalem]{ulem}
\usepackage{svg}
\usepackage{enumitem}
\usepackage{booktabs}
\usepackage{mathtools}
\usepackage{float}
\usepackage{hyperref}
\usetikzlibrary{shapes.multipart}

\title{On the common generalization of gentle algebras and framed directed acyclic graphs}
\author{Jonah Berggren}
\date{}

\newtheorem{thm}{Theorem}[section]
\newtheorem{prop}[thm]{Proposition}
\newtheorem{lemma}[thm]{Lemma}
\newtheorem{cor}[thm]{Corollary}

\newtheorem{thmIntro}{Theorem}    
    
\theoremstyle{definition}
    
\newtheorem{defn}[thm]{Definition}
 
\newtheorem{remk}[thm]{Remark}
\newtheorem{example}[thm]{Example}

\newcommand{\K}{\mathcal K}

\renewcommand{\k}{k}
\newcommand{\e}{\varepsilon}

\newcommand{\inn}{\textup{in}}
\newcommand{\out}{\textup{out}}

\renewcommand{\epsilon}{\varepsilon}

\newcommand{\F}{\mathcal F}

\newcommand{\tL}{{\tilde\Lambda}}
\renewcommand{\L}{{\Lambda}}
\newcommand{\I}{\mathcal{I}}

\newcommand{\xx}{\textbf{x}}

\newcommand{\B}{\mathcal{B}}
\newcommand{\R}{\mathcal{R}}

\renewcommand{\epsilon}{\varepsilon}
\newcommand{\bK}{\bar{\mathcal K}}
\renewcommand{\K}{\mathcal{K}}
\renewcommand{\B}{\mathcal{B}}

\renewcommand{\int}{\circ}
\newcommand{\D}{\Delta}

\renewcommand{\R}{\mathfrak{F}}
\renewcommand{\emph}{\textbf}
\newcommand{\te}{\tilde e}
\newcommand{\tf}{\tilde f}
\newcommand{\tg}{\tilde g}
\renewcommand{\th}{\tilde h}
\newcommand{\G}{\Gamma}

\begin{document}

\maketitle

\begin{abstract}
	In the study of flow polytopes, a directed acyclic graph (DAG) with a choice of framing gives a regular unimodular triangulation on its space of unit nonnegative flows.
	In representation theory, a gentle algebra has recently been equipped with a space of unit flows admitting triangulation and subdivision results capturing its $\tau$-tilting theory.
	These theories from different areas of mathematics overlap: flows on gently framed DAGs are the same as flows on paired representation-finite gentle algebras.
	In this article we develop the common generalization of these two theories by defining {(framed) turbulence charts}, which may be thought of as analogs of (framed) DAGs without the conditions of (D)irectedness and (A)cyclicity.
	The space of unit flows on a turbulence chart is its {turbulence polyhedron}. We give presentation and subdivision results on turbulence polyhedra which restrict to known results in the settings of framed DAGs and gentle algebras.
\end{abstract}

\section{Introduction}

Unit flow polytopes model a notion of flow on a directed acyclic graph (DAG). They are a fundamental object of combinatorial optimization~\cite{Hille,MM,MS,RW} with connections to fields such as representation theory~\cite{BV}, Grothendieck polynomials~\cite{LMD}, and algebraic geometry~\cite{EM}.
Danilov, Karzanov, and Koshevoy~\cite{DKK} defined \emph{framings} on DAGs which induce a notion of pairwise compatibility on routes (paths from source to sink). The \emph{clique complex} of sets of pairwise compatible routes of a framed DAG serves as a combinatorial model for a regular unimodular \emph{framing triangulation} of the associated unit flow polytope. Framing triangulations have received considerable attention in recent years~\cite{WIWT,DHY,MMS,vBC,vBDMY} and many important classes of triangulated polytopes appear in this way, including associahedra, generalized permutahedra~\cite{MS}, $s$-permutahedra~\cite{Y2}, and many order polytopes~\cite{LMD}.
Moreover, the dual graphs of these framing triangulations have also seen recent study; in~\cite{vBC} the simplices of a framing triangulation were given a lattice structure generalizing classical lattices such as $\nu$-Tamari and $s$-weak order lattices.

Introduced in~\cite{AIR}, the $\tau$-tilting theory of a finite-dimensional algebra has proven to be a successful generalization of tilting and cluster-tilting theory~\cite{BMRRT}.
Gentle algebras are a particularly rich source of examples in the representation theory of algebras introduced in the 1980's and studied heavily since that time~\cite{AH81,AS87,BR87}.
It was independently shown in~\cite{PPP,BDMTY} that for a gentle algebra $\L$ one may construct a \emph{fringed algebra} $\tL$ of $\L$ by adding some extra ``fringe'' vertices and arrows.
The \emph{clique complex} (or non-kissing complex) of non-kissing walks between fringe vertices on $\tL$ describes the $\tau$-tilting theory of $\L$.
It was observed recently~\cite{WIWT} that the combinatorics of the $\tau$-tilting theory of certain gentle algebras aligns with the combinatorics of the framing triangulations of certain framed DAGs.
In response to this, the author~\cite{BERG} defined the \emph{turbulence polyhedron} $\F_1(\tL)$ of a gentle algebra $\L$ and obtained presentation, triangulation, and subdivision results for $\F_1(\tL)$ modelled by walks on the quiver of $\tL$.
This turbulence polyhedron has a concrete quotient map onto the \emph{g-polyhedron} of the gentle algebra $\L$ which captures its \emph{g-vector fan}~\cite{AIR}, a construction which has received much attention in the representation theory of algebras~\cite{AHIKM,AY,AsaiWall,DIJ,HPS,PPP2023,PPP,PYK}.
When $\L$ is paired and representation-finite, the triangulated turbulence polyhedron $\F_1(\tL)$ is the same as the framing-triangulated flow polytope of an associated \emph{gentle framed} DAG through the recipe first described in~\cite{WIWT}. Hence, the work~\cite{BERG} develops gentle algebras as a generalization of the theory of framing triangulations of gentle framed DAGs.

This raises a question: what is the common generalization of framed DAGs and gentle algebras, when viewing these as two independent generalizations of gentle framed DAGs?
The goal of this article is to answer this question by defining \emph{(framed) turbulence charts}. Turbulence charts will give rise to turbulence polyhedra, generalizing flow polytopes from directed graphs and turbulence polyhedra from gentle algebras, and a framing on a turbulence chart will induce a simplicial subdivision (or triangulation, in bounded cases) on the turbulence polyhedron indexed by collections of ``noncrossing'' (or non-kissing) maximal walks along the chart.

\begin{center}
	\begin{tikzpicture}[every text node part/.style={align=center}]
		\node (A) at (0,0) [draw] {\textbf{Gentle Framed DAGs} \\ ample framing-triangulated \\ flow polytopes};
		\node (B) at (-4,2) [draw] {\textbf{Framed DAGs} \\ framing-triangulated \\ flow polytopes};
		\node (C) at (4,2) [draw] {\textbf{Gentle Algebras} \\ gentle subdivided \\ turbulence polyhedra};
		\node (D) at (0,4) [draw] {\textbf{\textcolor{red}{Framed Turbulence Charts}} \\ \textcolor{red}{framing-subdivided} \\ \textcolor{red}{turbulence polyhedra}};
		\draw (A) -- (B) -- (D) -- (C) -- (A);
	\end{tikzpicture}
\end{center}

We hope that our new constructions will prove useful in unifying the theories of framing triangulations and the $\tau$-tilting theory of gentle algebras, and in transporting results between these settings. As a taste, we pose a few questions here:
\begin{enumerate}
	\item There is a great deal of study on volume and $h^*$-polynomials of flow polytopes~\cite{BV,WIWT,CKM,DHY,MMS,Zeil}.
		What techniques and results generalize to bounded turbulence polyhedra, and is there a representation-theoretic interpretation of their specialization to the g-polytope in the gentle bounded case?
	\item Maximal simplices of the bundle subdivision of a directable framed turbulence chart (i.e., framed DAG) have the structure of a framing lattice~\cite{vBC}, and maximal simplices of the bundle subdivision of a gentle framed turbulence chart (i.e., gentle algebra) have the structure of a $\tau$-tilting poset~\cite{AIR}; these orders agree in the gentle directable acyclic case. Do these generalize to a reasonable order on the maximal cells and/or maximal simplices of the bundle subdivision of an arbitrary framed turbulence chart?
	\item The techniques of this paper realize the bundle subdivision of an turbulence chart $(G,\sim,\R)$ as a restriction of the bundle subdivision of a gentle algebra $\L$ to a face of $\F_1(\tL)$. Can something more structural be said about the relationship between the representation theory of $\L$ and the bundle subdivision $\mathcal S_1(G,\sim,\R)$, perhaps using techniques similar to those used in~\cite{WILT}?
\end{enumerate}

\subsection{Summary of results in more detail}

Define a \emph{turbulence chart} $(G,\sim)$ to be an undirected graph $G$ such that for every vertex $v$ of $G$ with $\text{deg}(v)\geq2$, the half-edges at $v$ are separated into two nonempty equivalence classes by an equivalence relation $\sim_v$. We call the vertices of $G$ with degree 1 \emph{fringe vertices} and draw them as boxes; all other vertices are \emph{internal vertices} and drawn as dots. At each internal vertex $v$, we draw a blue line separating the equivalence classes of $\sim_v$. See, for example, the middle of Figure~\ref{fig:turbchartintro}.

One obtains a turbulence chart from a directed graph by (1) considering each source edge to begin at its own fringed vertex, (2) considering each sink edge to end at its own fringed vertex, (3) letting $\sim_v$ separate the incoming half-edges at each internal vertex $v$ from the outgoing half-edges, and (4) forgetting the orientations of all edges.
Figure~\ref{fig:turbchartintro} features a turbulence chart along with an analogous DAG and gentle algebra (ignoring the red numbers representing the framing for now).
A turbulence chart is \emph{directable} if it comes from a directed graph in this way; a turbulence chart fails to be directable if its edges may not be given orientations such that the incoming edges are separated from the outgoing edges at each internal vertex. 
Moreover, one obtains a turbulence chart from the fringed algebra of a gentle algebra by separating through $\sim_v$ the half-edges of the two relations at each internal vertex $v$, and subsequently forgetting the orientations of arrows.

\begin{figure}
	\centering
	\def\svgscale{.3}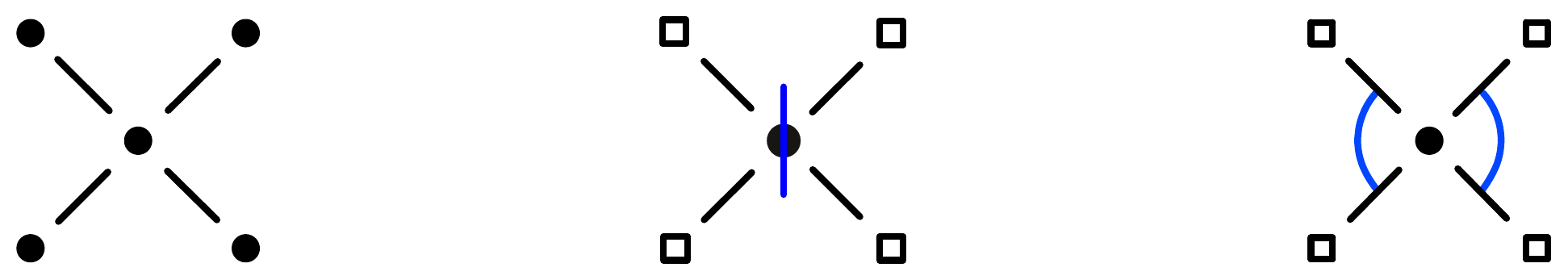
	\caption{A \textcolor{red}{framed} turbulence chart (middle) with an associated \textcolor{red}{framed} DAG (left) and fringed algebra (right).}
	\label{fig:turbchartintro}
\end{figure}

We say that a labelling $F$ of the edges of a turbulence chart \emph{conserves flow} at an internal vertex if the sum of the coefficients of edges in both equivalence classes at that vertex are equal. A \emph{flow} is a labelling which conserves flow at every vertex. A flow is \emph{unit} if the sum of coefficients of fringe edges is 2. The \emph{turbulence polyhedron} $\F_1(G,\sim)$ is the space of unit nonnegative flows. Note that if $(G,\sim)$ is associated to a directed graph (resp. fringed algebra) as in Figure~\ref{fig:turbchartintro}, then $\F_1(G,\sim)$ is equal to the flow polyhedron of the directed graph (resp. turbulence chart of the fringed algebra).

Similarly, we generalize paths on directed graphs and strings of a fringed algebra by defining \emph{strings} on turbulence charts to be walks along $G$ which must cross the equivalence class $\sim_v$ at each vertex $v$ they pass through. A \emph{route} is a maximal string (i.e., a string both of whose endpoints are fringe vertices) and a \emph{band} is a string which forms an infinitely repeatable cycle. We refer to routes and bands collectively as \emph{trails}. A turbulence chart is \emph{acyclic} if it has no bands, generalizing acyclicity of directed graphs.

We define some routes and bands to be \emph{elementary} (see Definitions~\ref{defn:elroute} and~\ref{defn:elband} for the technical definition or Figure~\ref{ELEMENTARYINTRO} for the pictorial summary) to obtain the following presentation of $\F_1(G,\sim)$:
\begin{thmIntro}[{Theorem~\ref{thm:pres-general}}]\label{ithm:pres}
	The map $p\mapsto\I(p)$ bijects elementary routes to vertices of $\F_1(G,\sim)$. The map $B\mapsto\I(B)$ is a map from elementary bands to the extremal rays of $\F_1(G,\sim)$ which is injective on simple bands but 2-to-1 on barbells.
\end{thmIntro}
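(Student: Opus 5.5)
The plan is to treat $\F_1(G,\sim)$ as a pointed polyhedron cut out by linear equations and sign constraints. Its vertices and extremal rays are then exactly the feasible points, respectively recession directions, whose support is minimal. Concretely, $\F_1(G,\sim)$ sits in the orthant $\mathbb{R}_{\geq 0}^{E(G)}$ and is defined by the conservation equations at internal vertices together with the unit condition. A point $F\in\F_1(G,\sim)$ is a vertex if and only if no nonzero solution $x$ of the homogeneous equations, with $x$ supported on $\operatorname{supp}(F)$, also satisfies $\sum_{\text{fringe}} x_e=0$. Similarly, a nonzero element $R$ of the recession cone (nonnegative flows with zero fringe total) spans an extremal ray if and only if its support is minimal among supports of nonzero elements of the cone. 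So the whole proof reduces to a combinatorial description of minimal supports. The key steps are:
\begin{enumerate}
\item Show that every nonnegative flow decomposes as a nonnegative combination of incidence vectors $\I(T)$ of trails. Starting at any edge carrying positive flow and walking while always crossing $\sim_v$, conservation at each internal vertex guarantees an edge of positive flow on the other side. The walk either reaches fringe vertices at both ends, giving a route, or revisits a (half-edge, side) state, giving a band. Subtract a multiple of $\I(T)$ and induct on support size.
\item Deduce that vertices lie in $\{\I(p)/\!\!\sim\}$ for routes $p$, after normalizing so that the fringe total is $2$; note that each route uses exactly two fringe half-edges, so $\I(p)$ is already unit. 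Likewise, extremal rays are spanned by $\I(B)$ for bands $B$. This uses the fact that at a vertex or extremal ray the decomposition from step (1) must be trivial up to scaling.
\item Characterize, among routes and bands, those whose incidence vector has minimal support (respectively, those for which the linear system restricted to the support has a one-dimensional solution space). This should match Definitions~\ref{defn:elroute} and~\ref{defn:elband}. A route $p$ that passes through some edge or vertex-side in a way allowing a ``shortcut'' or a detachable cycle yields two distinct trails with supports inside $\operatorname{supp}\I(p)$. Conversely, for an elementary route, a support-restricted flow is forced edge-by-edge to be proportional to $\I(p)$, by propagating along $p$ and using that each vertex sees $p$ in the configurations allowed by the definition, where multiplicities force equalities.
\item Establish injectivity. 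Distinct elementary routes have distinct incidence vectors, because an elementary route is recovered from its support graph by the forced-walk argument of (3). For bands, a simple band is likewise recovered up to the trivial symmetries that do not change the band. A barbell, however, consists of two cycles joined by a bar traversed twice, and it admits two distinct traversals (differing in the orientation of one bell relative to the other) with the same multiset of edges. This gives exactly the $2$-to-$1$ behaviour, and I would verify that there are no further coincidences.
\end{enumerate}

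The main obstacle is step (3): matching minimal support, equivalently a one-dimensional kernel on the support, exactly with the technical definition of elementary. This is harder than in the DAG case because trails in an undirected chart may revisit vertices and edges, with multiplicity $2$ as in barbells. So I must carefully analyse, for each local configuration at a vertex visited twice, whether the restricted conservation equations force proportionality. I would first try to reduce to the gentle setting: add fringe edges to make the chart gentle, realize $\F_1(G,\sim)$ as a face of the turbulence polyhedron $\F_1(\tL)$ of a gentle algebra, and import the presentation theorem of~\cite{BERG}. Since vertices and rays of a face are the vertices and rays of the ambient polyhedron lying in it, this would leave only the check that the elementary routes and bands of $\tL$ lying in that face are exactly those of $(G,\sim)$.
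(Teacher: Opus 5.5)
There is a genuine gap. Your steps (1) and (2) are fine. The walk-and-subtract argument is the standard flow decomposition. Extremality then forces every vertex to be a single $\I(p)$ and every extremal ray to be spanned by a single $\I(B)$. Your minimal-support criteria for vertices and rays are also correct.

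The gap is step (3), which is the entire content of the theorem and which you only sketch. You need precise statements of the following:
\begin{enumerate}
\item $\I(p)$ is a vertex \textit{only} if $p$ is simple or a lollipop; otherwise surjectivity onto vertices does not land on elementary routes.
\item $\I(p)$ \textit{is} a vertex for every simple route and every lollipop.
\item The edge multiset of an elementary route recovers it up to inversion.
\item The analogous facts hold for vortices: minimal supports are exactly simple bands and barbells, and each barbell shares its indicator vector with exactly one other band.
\end{enumerate}
The ``shortcut or detachable cycle'' heuristic gives no procedure for exhibiting two trails with distinct indicator vectors inside the support of an arbitrary nonelementary route. Such a route may revisit a vertex many times and retrace earlier segments in either direction. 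The gap can be closed by reading $(G,\sim)$ as a bidirected graph with all fringe vertices glued to one root and using frame-matroid combinatorics. A vertex support is then connected, has exactly one cycle (counting the root), and that cycle admits no nonzero circulation. Minimal vortex supports are the nonnegative circuits, i.e.\ band-traversable cycles and handcuffs. As written, though, your argument stops exactly where the difficulty begins.

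Your fallback, reducing to the gentle case and importing \cite{BERG}, is the paper's actual strategy, but as you describe it, it fails. Adding fringe edges only enlarges $\sim_v$-classes. So it can never make full a chart with a class of size at least three; the internal vertex of Figure~\ref{fig:bowtie} has two classes of size three. It also cannot repair an edge joining two fringe vertices. And in a chart that is already full no edge can be added at all, so a steep band (Definition~\ref{defn:gentchart}(4)) cannot be removed.

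The gentle envelope of Theorem~\ref{prop:face-of-gentle} additionally splits vertices and subdivides edges (degree-reduction, steepening and band-correction). Each of these is undone by contracting an idle edge. Consequently $\F_1(G,\sim)$ is the face $Q_W$ of a gentle turbulence polyhedron only up to the unimodular equivalence that forgets the coordinates of idle edges. The transfer therefore needs, besides Lemma~\ref{lem:elemdelete} for deleting $W$, two facts about contraction: it carries $\I(p)$ to $\I(\phi(p))$, and it maps elementary trails bijectively to elementary trails (Lemmas~\ref{lem:contractchart} and~\ref{lem:elemcontract}). With that in place, your step (4) observation about the two traversals of a barbell (Remark~\ref{remk:band-2to1}) correctly converts \cite{BERG}'s bijection with self-compatible elementary bands into the 2-to-1 statement.
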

\begin{figure}
\centering
	\begin{minipage}[b]{.15\textwidth}
  \centering
	\def\svgscale{.21}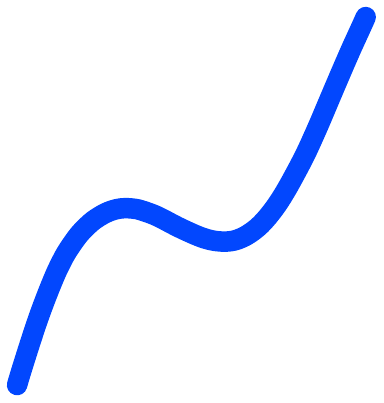

	simple
\end{minipage}
	\begin{minipage}[b]{.15\textwidth}
  \centering
	\def\svgscale{.21}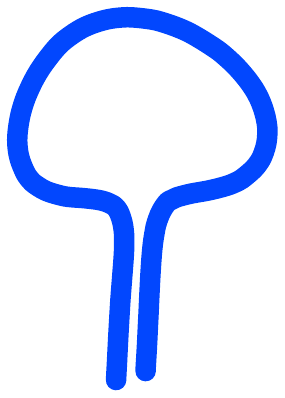

	lollipop
\end{minipage}
	\begin{minipage}[b]{.15\textwidth}
  \centering
	\def\svgscale{.21}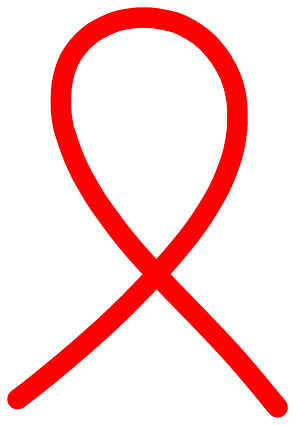

	nonelementary
\end{minipage}
	\begin{minipage}[b]{.15\textwidth}
  \centering
	\def\svgscale{.21}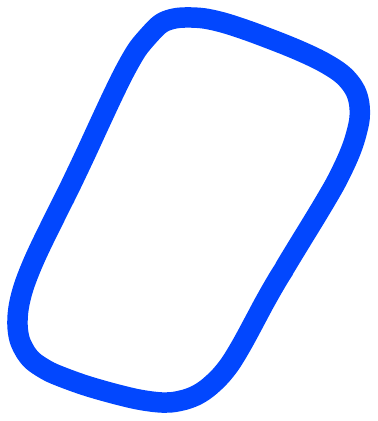

	simple
\end{minipage}
	\begin{minipage}[b]{.15\textwidth}
  \centering
	\def\svgscale{.21}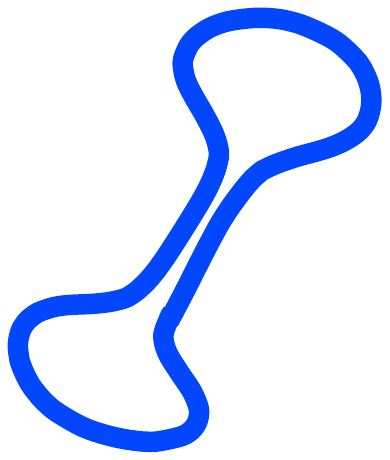

	barbell
\end{minipage}
	\begin{minipage}[b]{.15\textwidth}
  \centering
	\def\svgscale{.21}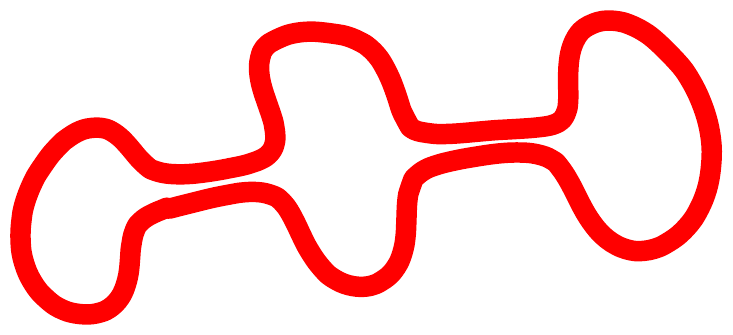

	nonelementary
\end{minipage}
	\caption{Examples of elementary (blue) and nonelementary (red) routes and bands.}
	\label{ELEMENTARYINTRO}
\end{figure}
Note, then, that a turbulence polyhedron is bounded if and only if its turbulence chart has no bands. For examples of this presentation result, see Figure~\ref{fig:trapezoid} (where the unique nonelementary route labels the unique integer point which is not a vertex) and Figure~\ref{fig:bowtie} (where the three vertices are labelled by the three elementary routes, and the unbounded direction represents the indicator vector of the two elementary bands).

We generalize framings on DAGs by defining a \emph{framing} $\R$ on a turbulence chart to be a set of total orders on the elements of every $\sim$-equivalence classes of half-edges.
A framing on a turbulence chart induces a notion of pairwise compatibility on its trails. 
Framings on DAGs give rise to framings on turbulence charts by preserving all incoming orders and reversing all outgoing orders, and the turbulence chart of a fringed quiver is given a framing by ordering all head half-edges high and tail half-edges low. In this way, framed turbulence charts generalize both framed DAGs and fringed quivers.
Call a framed turbulence chart \emph{gentle} if it comes from a fringed quiver of a gentle algebra.

A \emph{clique} is a collection of pairwise compatible routes, and a \emph{bundle} is a collection of pairwise compatible routes and bands.
Given a drawing of a turbulence chart $(G,\sim)$, one may define the \emph{clockwise framing} by ordering half-edges clockwise around each internal vertex as in Figures~\ref{fig:trapezoid} and~\ref{fig:bowtie}. In this case, a bundle is a collection of routes and bands which can be drawn together with no crossings. This generalizes the notion of a {planar framing} on an embedding of a DAG~\cite{DKK,vBC}.

If $\bK=\K\cup \B$ is a bundle of $(G,\sim,\R)$ (where $\K$ consists of routes and $\B$ consists of bands), then a \emph{(unit) $\bK$-bundle combination} is a nonnegative linear combination
$F=\sum_{p\in \bK}a_p\I(p)$
of indicator vectors of trails of $(G,\sim,\R)$ such that $\sum_{p\in\K}a_p=1$.
A bundle combination is a \emph{clique combination} if $\B=\emptyset$ or if $a_p$ is zero for every $B\in \B$. It is \emph{positive} if $a_p$ is positive for every $p\in \bK$.
If $\bK$ is a bundle, then its \emph{bundle simplihedron} $\D_1(\bK)$ is the polyhedron of unit $\bK$-bundle combinations (a ``simplihedron'' is a generalization of simplex to allow for unbounded polyhedra).

\begin{thmIntro}[{Theorem~\ref{thm:bundle}}]
	\label{ithm:bundle}
	Let $(G,\sim,\R)$ be a framed turbulence chart.
	Any nonnegative flow has at most one representation as a positive bundle combination.
	A dense subset of $F\in\F_{\geq0}(G,\sim)$ including all rational flows may be obtained as a bundle combination. If $F$ is an integer flow, then its bundle combination has integral coefficients.
\end{thmIntro}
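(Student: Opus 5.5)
The plan is to reduce to the gentle case, which the introduction (question (3)) indicates is how the paper proceeds: realize the bundle subdivision of $(G,\sim,\R)$ as the restriction of the bundle subdivision of a gentle algebra to a face of its turbulence polyhedron. For the gentle case we take as known the analogous bundle theorem of~\cite{BERG}: uniqueness of positive bundle combinations, density of representable flows, and integrality.

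\textbf{Step 1: build a gentle framed chart.} Given $(G,\sim,\R)$, replace each internal vertex $v$ by a local gadget. The two $\sim_v$-classes are each totally ordered by $\R$. Expand them into a chain of degree-$\le 4$ vertices carrying gentle (two-relation) structure, adding fringe edges where necessary. This produces a fringed gentle quiver $\tL$ with the following properties.
\begin{enumerate}
\item Strings of $(G,\sim)$, which must cross $\sim_v$, correspond to strings of $\tL$ that avoid the new fringe edges.
\item The framing order at $v$ becomes the head-high/tail-low order, so that compatibility of trails, i.e.\ crossing versus not crossing at $v$, is preserved.
\end{enumerate}
The flows on $(G,\sim)$ then embed linearly as the flows on $\tL$ that vanish on the added fringe edges. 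This set is a face of $\F_{\geq0}(\tL)$, cut out by nonnegative coordinates being zero. Unit normalization is preserved because only the original fringe edges carry fringe flow.

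\textbf{Step 2: transport the theorem.} Take a flow $F$ on $G$ and regard it as a flow on $\tL$. Suppose it is a positive bundle combination on $\tL$. Every trail appearing with positive coefficient has support inside the support of $F$, so it avoids the added fringe edges and is therefore a trail of $G$. Compatibility is preserved in both directions, so bundles of $\tL$ lying in the face correspond exactly to bundles of $(G,\sim,\R)$. Uniqueness, density (rational flows map to rational flows), and integrality then transfer directly. Density within the face needs a short argument: density in the ambient cone does not by itself give density in a face. Since the statement explicitly includes all rational flows, it is enough to check that rational flows are representable, and this transfers from the face.

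\textbf{Alternative route.} If the reduction is not available, I would argue directly.
\begin{itemize}
\item \emph{Uniqueness.} Induct on the support. At each internal vertex, compatibility forces the trails of a bundle to pair incoming half-edge positions monotonically with respect to the framing orders. The coefficients on the edges at $v$ then determine how flow is routed through $v$, by a ``greedy'' matching of the two ordered classes: the $k$-th unit of flow from the top of one class goes to the $k$-th unit from the top of the other. Gluing these local matchings determines the trails and their coefficients uniquely.
\item \emph{Existence for rational, hence integral, flows.} Scale $F$ to an integer flow. Perform the greedy matching at every vertex, splitting each edge into $F(e)$ unit strands. Follow the strands: each strand either reaches fringe vertices, giving a route, or closes up, giving a periodic band. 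Strands following the same trail are grouped, which yields integer coefficients. Pairwise compatibility of the resulting strands is automatic from the monotone matching.
\end{itemize}

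\textbf{Main obstacle.} The hard part is the band case of the direct argument. Closed strands must be identified as powers of primitive bands, and two closed strands along the same band must be shown to be compatible, since otherwise they would not form a bundle. Irrational flows are covered only by density, which is why the statement asserts only a dense subset. I would handle the band case by a strand-counting argument on the covering periodic walk.
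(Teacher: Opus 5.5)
Your overall strategy is the paper's: embed $(G,\sim,\R)$ as the $W$-avoiding face of a gentle chart, note that a positive bundle combination of a $W$-avoiding flow can only use $W$-avoiding trails, and transport back through contractions of idle edges. Your observation that density on the face should come from representability of rational flows is also a correct refinement, since rational points are dense in the rational face.

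The gap is in Step~1. Expanding internal vertices into chains and adding fringe edges does not in general produce a \emph{gentle} chart, and you never check the conditions of Definition~\ref{defn:gentchart} beyond fullness. Two things go wrong.

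First, edges need not become steep. Suppose $e$ joins internal vertices $v,w$ and is the lower element of a two-element class at both ends. Then no expansion happens at either end, and filling only touches singleton classes. So $e$ keeps two low half-edges and cannot be oriented low-to-high. Likewise, an edge between two fringe vertices has no internal vertex to expand. The paper handles both cases with steepening moves: subdivide the edge through a new degree-two vertex whose half-edges are lonely, then fill.

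Second, and more seriously, a full chart with all edges steep can still contain a steep band, i.e.\ an oriented cycle carrying no relations. Take one internal vertex with classes $\{a_1<a_2\}$ and $\{b_1<b_2\}$, a loop joining $a_2$ to $b_1$, and fringe edges at $a_1$ and $b_2$. This chart is already full and steep, so your gadget changes nothing. Yet the loop becomes an arrow $\alpha$ with $\alpha^2\notin I$. The resulting algebra is only locally gentle and infinite-dimensional, so the theorem from \cite{BERG} you invoke does not apply. The missing ingredient is the paper's band-correction move:
\begin{enumerate}
\item replace one edge of the steep band by a path $f_1f_2f_3$ through two new degree-two vertices;
\item fill so that $\tf_2$ is descending, which breaks the band in both traversal directions;
\item observe that this is undone by deleting the new fringe edges and contracting two of $f_1,f_2,f_3$;
\item repeat until no steep band remains.
\end{enumerate}
Only after steepening and band correction do you have an honest gentle envelope to which your Step~2 applies.

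Your fallback direct argument has the local rule backwards for this paper's convention. With $s$ lazy, a pair $(\te_1\tf_1,\te_2\tf_2)$ with $\te_1^h<\te_2^h$ and $\tf_1^t<\tf_2^t$ is an incompatibility. So compatible trails through a vertex must be matched order-reversingly: the $k$-th unit from the top of one class goes to the $k$-th unit from the bottom of the other. In addition, the order of strands on an edge must flip between its two ends. Matching ``top to top'' produces exactly the forbidden configuration.
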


Theorem~\ref{ithm:bundle} is really a simplicial subdivision result on $\F_1(G,\sim)$. Specifically, for an arbitrary framed turbulence chart $(G,\sim,\R)$ the \emph{bundle subdivision}
	\[\mathcal S_1(G,\sim,\R):=\{\D_1(\bK)\ :\ \bK\text{ is a maximal bundle of }(G,\sim,\R)\text{ with at least one route}\}\]
	is a subdivision of $\F_1(G,\sim)$ into simplihedra which covers every rational point of $\F_1(G,\sim)$.
	When $(G,\sim,\R)$ is bounded, there are no bands the bundle subdivision is a unimodular triangulation called the \emph{clique triangulation}. 
	Versions of bundle subdivisions and clique triangulations also hold for $\F_{\geq0}(G,\sim)$.

When $(G,\sim)$ is directable (resp. gentle), Theorems~\ref{ithm:pres} and~\ref{ithm:bundle} specialize to known presentation, triangulation and subdivision results on the corresponding framed DAG (resp. fringed algebra). See Figure~\ref{fig:sqturb}, which shows the chart from Figure~\ref{fig:turbchartintro} with its turbulence polytope triangulated by the two maximal cliques (which are the same as the maximal cliques of the framed DAG or fringed algebra of Figure~\ref{fig:turbchartintro}; see Figures~\ref{fig:sqdag} and~\ref{fig:sqgent}, respectively).
\begin{figure}
	\centering
	\def\svgscale{.3}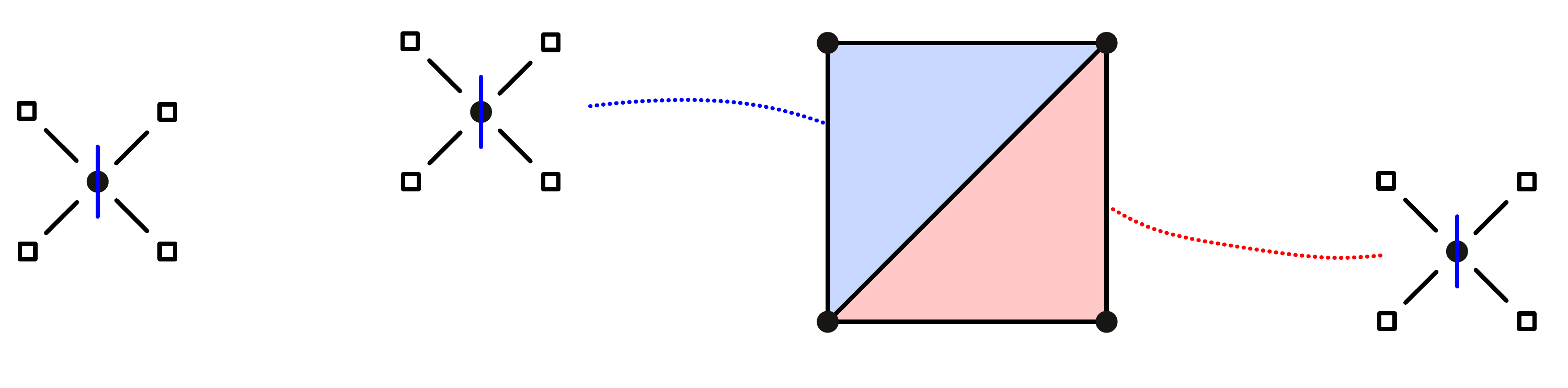
	\caption{The clique-triangulated turbulence polytope of the chart in Figure~\ref{fig:turbchartintro} (which agrees with the triangulated polytopes of the framed DAG and fringed algebra).}
	\label{fig:sqturb}
\end{figure}

See Figure~\ref{fig:trapezoid} for the bundle subdivisions of two framings of the same acyclic turbulence chart. Figure~\ref{fig:bowtie} shows the bundle subdivision of a framed turbulence chart with a cycle. Since both of these figures feature the clockwise framing, the cells of their subdivisions represent maximal collections of trails which can be drawn with no crossings.

	\begin{figure}
		\centering
		\def\svgscale{.25}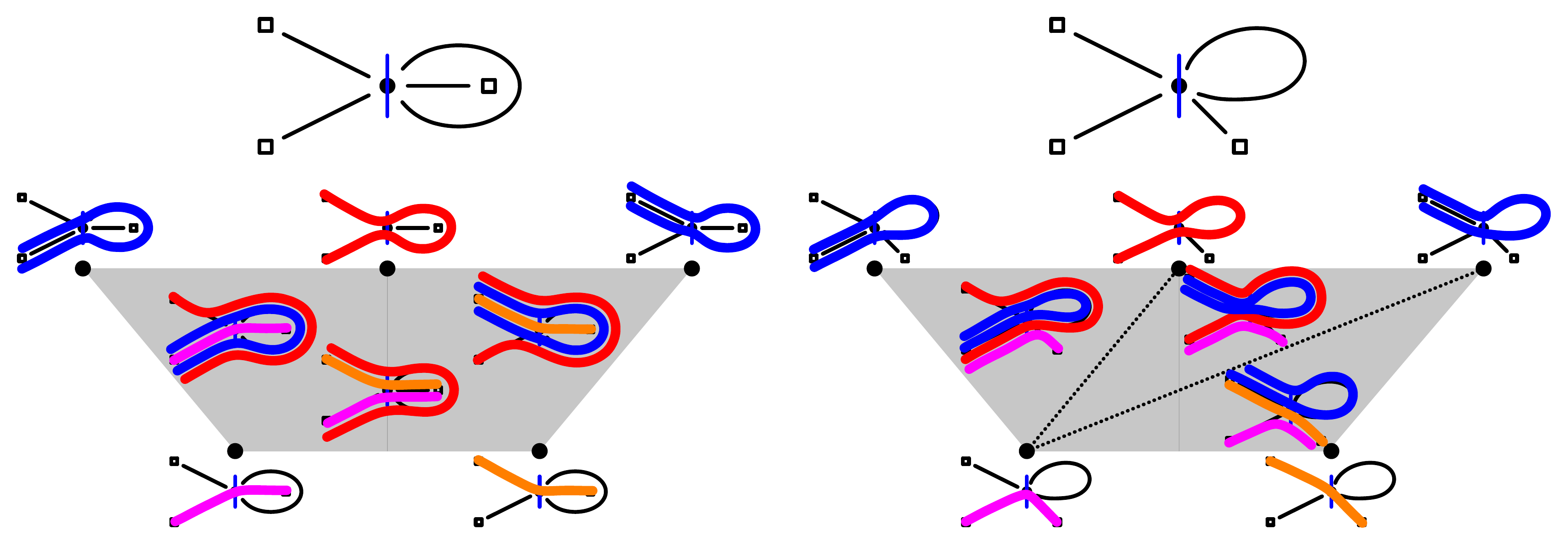
		\caption{Two framings of a turbulence chart which are acyclic, but not directable or gentle. The five integer points are labelled by routes, and the three maximal simplices of the bundle subdivision are labelled by maximal bundles.}
		\label{fig:trapezoid}
	\end{figure}

	\begin{figure}
		\centering
		\def\svgscale{.25}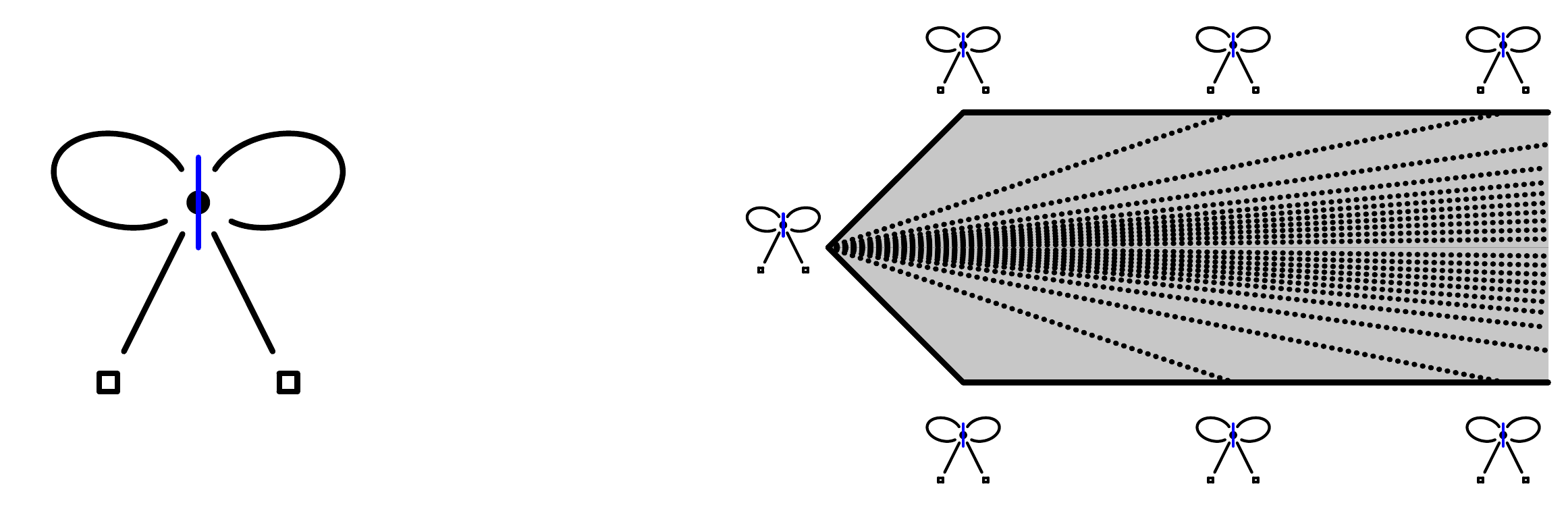
		\caption{A framed turbulence chart which is not directable, acyclic, or gentle.}
		\label{fig:bowtie}
	\end{figure}

We now speak to the methods used to prove Theorems~\ref{ithm:pres} and \ref{ithm:bundle}. Versions of these theorems for flows on fringed algebras of gentle algebras were proven in~\cite{BERG}. Recall that certain \emph{gentle} framed turbulence charts come directly from fringed algebras. The technical core of this paper is a process by which we may take a general framed turbulence chart $(G,\sim,\R)$ and obtain a gentle framed turbulence chart $(G',\sim',\R')$ with a set of edges $W$ such that deleting the edges of $W$ from $(G',\sim',\R')$ and then performing some contractions retrieves the original $(G,\sim,\R)$; this is similar to the construction of \emph{ample envelopes} appearing in~\cite{WILT}. In this way, we show that (up to unimodular equivalences realized by contractions) turbulence polyhedra are precisely faces of gentle turbulence polyhedra. Moreover, we are able to prove Theorems~\ref{ithm:pres} and \ref{ithm:bundle}
by transporting analogous results from the gentle case.

We finish the introduction by discussing the properties of directability, acyclicity, and gentleness of a framed turbulence chart. These properties may be independently varied; see Figure~\ref{fig:CUBEOFGEN}. 
Framed turbulence charts which are directable, acyclic, and gentle correspond to gentle framed DAGs~\cite[Proposition 4.29]{BERG}, which are closely related to the amply framed DAGs of~\cite{DKK,WIWT}.
Directable (resp. framed) turbulence charts are those whose flow may be given consistent direction across the edges, and are described by (resp. framed) directed graphs (which may or may not have oriented cycles). In fact, our presentation and subdivision results are new when specialized to framed directed graphs with cycles as we discuss in Section~\ref{sec:specialization} (note that the clique complexes of certain framed directed graphs with cycles, called \emph{cyclic amply framed DAGs}, were described in~\cite{UQAM}; see Remark~\ref{remk:uqam1}).
Gentle turbulence charts are those which may be associated to fringed algebras of gentle algebras.
Acyclicity is equivalent to the presence of bands, which generalize oriented cycles in the directed graph setting and representation-infiniteness in the gentle algebra setting. A turbulence chart is acyclic if and only if its turbulence polyhedron is bounded, in which case the bundle subdivision is a unimodular triangulation of the turbulence polytope. Turbulence polytopes of acyclic turbulence charts are furthermore examples of signed flow polytopes of signed graphs, defined by M\'esz\'aros and Morales~\cite{MMSigned} as a generalization of flow polytopes from type $A_n$ root systems to types $C_{n+1}$ and $D_{n+1}$.

\begin{figure}
\begin{center}
	\scalebox{.89}
	{
	\begin{tikzpicture}[xscale=1, yscale=1, every text node part/.style={align=center}]
		\node (AAA) at (0,0) [draw] {\textbf{\color{blue}{gentle\ +\ acyclic\ +\ directable}} \\ \textbf{\color{blue}{turbulence charts}}
		\\  \\	\begin{minipage}{170pt}\begin{itemize}
				\item Paired representation-finite gentle algebras
				\item Gentle framed DAGs
		\end{itemize}\end{minipage}
		};
		\node (AAB) at (0,8) [draw] {\textbf{\color{blue}{gentle\ +\ \textcolor{red}{\sout{acyclic}}\ +\ directable}} \\ \textbf{\color{blue}{turbulence charts}}
	\\  \\	\begin{minipage}{160pt}\begin{itemize}
				\item Paired gentle algebras
				\item Gentle framed directed graphs
		\end{itemize}\end{minipage}
		};
		\node (ABA) at (8,0) [draw] {\textbf{\color{blue}{gentle\ +\ acyclic\ +\ \textcolor{red}{\sout{directable}}}} \\ \textbf{\color{blue}{turbulence charts}}
	\\  \\	\begin{minipage}{150pt}\begin{itemize}
				\item Representation-finite gentle algebras
		\end{itemize}\end{minipage}
		};
		\node (ABB) at (8,8) [draw] {\textbf{\color{blue}{gentle\ +\ \textcolor{red}{\sout{acyclic}}\ +\ \textcolor{red}{\sout{directable}}}} \\ \textbf{\color{blue}{turbulence charts}}
		\vspace{5pt}  \\	\begin{minipage}{140pt}\begin{itemize}
				\item Gentle algebras
		\end{itemize}\end{minipage}
		};
		\node (BAA) at (4,4) [draw] {\textbf{\color{blue}{\textcolor{red}{\sout{gentle}}\ +\ acyclic\ +\ directable}} \\ \textbf{\color{blue}{turbulence charts}}
		\vspace{5pt}  \\	\begin{minipage}{140pt}\begin{itemize}
				\item Framed DAGs
		\end{itemize}\end{minipage}
		};
		\node (BAB) at (4,12) [draw, shade, top color=pink, bottom color=pink] {\textbf{\color{blue}{\textcolor{red}{\sout{gentle}}\ +\ \textcolor{red}{\sout{acyclic}}\ +\ directable}} \\ \textbf{\color{blue}{turbulence charts}}
		\vspace{5pt}  \\	\begin{minipage}{140pt}\begin{itemize}
				\item Framed directed graphs
				\item Figure~\ref{fig:kronface}
		\end{itemize}\end{minipage}
		};
		\node (BBA) at (12,4) [draw, shade, top color=pink, bottom color=pink] {\textbf{\color{blue}{\textcolor{red}{\sout{gentle}}\ +\ acyclic\ +\ \textcolor{red}{\sout{directable}}}} \\ \textbf{\color{blue}{turbulence charts}}
		\vspace{5pt}  \\	\begin{minipage}{140pt}\begin{itemize}
				\item Figure~\ref{fig:trapezoid}
		\end{itemize}\end{minipage}
		};
		\node (BBB) at (12,12) [draw, shade, top color=pink, bottom color=pink] {\textbf{\color{blue}{\textcolor{red}{\sout{gentle}}\ +\ \textcolor{red}{\sout{acyclic}}\ +\ \textcolor{red}{\sout{directable}}}} \\ \textbf{\color{blue}{turbulence charts}}
		\vspace{5pt}  \\	\begin{minipage}{140pt}\begin{itemize}
				\item Figure~\ref{fig:bowtie}
		\end{itemize}\end{minipage}
		};

		\draw (AAA) -- (AAB) -- (ABB) -- (ABA) -- (AAA);
		\draw (AAA) -- (BAA) -- (BBA) -- (ABA);
		\draw (BAA) -- (BAB) -- (BBB) -- (BBA) -- (BAA);
		\draw (ABA) -- (BBA);
		\draw (AAB) -- (BAB);
		\draw (ABB) -- (BBB);
	\end{tikzpicture}}
\end{center}
	\caption{The cube of turbulence charts generated by the three independent generalizations to gentle framed DAGs which we treat in this article. Gentle framed DAGs lie at the bottom-left; moving up generalizes by removing the assumption of acyclicity, moving right generalizes by removing the assumption of directability, and moving up-right diagonally generalizes by removing the assumption of gentleness. The three shaded boxes are those for which our presentation and subdivision results are new.}
	\label{fig:CUBEOFGEN}
\end{figure}

The structure of the paper is as follows.
Sections~\ref{sec:back-dag} and~\ref{sec:back-turb} give background on flow polytopes of (framed) DAGs and turbulence polyhedra of fringed algebras.
In Section~\ref{sec:tc} we define turbulence charts and their turbulence polyhedra, trails, and bundle complex. In Section~\ref{sec:gda} we define and isolate the properties of gentleness, directability, and acyclicity. Section~\ref{sec:ge} is the technical core of the paper, where we reduce an arbitrary turbulence chart to a gentle envelope with some edges deleted.
In Section~\ref{sec:pres-subd} we use gentle envelopes to prove our presentation and subdivision results about turbulence polyhedra.
Section~\ref{sec:specialization} reiterates that our results specialize to known results for gentle algebras and framed DAGs, and that they specialize to new results for framed directed graphs with cycles.
We finish the article in with some examples from the Kronecker quiver in Section~\ref{sec:examples}.

\subsection*{Acknowledgments}

The author was supported by the NSF grant DMS-2451909. This work was supported by a grant from the Simons Foundation International [SFI-MPS-TSM-00013650, KS].

\section{Background on Polyhedra, Subdivisions, and Triangulations}
\label{ssec:subd}

We first give some basic definitions and results on polyhedra. 
A \emph{(convex) polyhedron} $P$ in $\mathbb R^n$ is the solution space $\{\xx\ :\ A\xx\leq0\}$, where $A$ is a real matrix with $n$ columns. A \emph{polytope} is a bounded polyhedron.
A \emph{face} of the polyhedron $P$ is a space of the form $P\cap\{\xx\ :\ A'\xx=0\}$ where $A'$ is a matrix given by deleting some columns from $A$.
A \emph{vertex} of $P$ is a face of dimension 0.
A polyhedron is \emph{pointed} if it contains at least one vertex. 
The polyhedra considered in this paper will all be in $\mathbb R_{\geq0}^n$ and hence pointed.

\subsection{Vertices, extremal rays, and presentations}

Given finite sets $A$ and $B$ of vectors in $\mathbb R^n$ such that $A$ is nonempty, define $P_{(A,B)}:=\textup{conv}(A)+\textup{cone}(B)$.

\begin{thm}[{\cite[Corollary 7.1b]{Schrijver}}]
	\label{thm:decomposition}
	A nonempty subset $P\subseteq\mathbb R^n$ is a polyhedron if and only if $P=P_{(A,B)}$ for some finite sets $A,B\subseteq\mathbb R^n$ with $A\neq\emptyset$.
\end{thm}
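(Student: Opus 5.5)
The plan is to prove the two directions of the Minkowski–Weyl decomposition separately, using the given conventions: a polyhedron is $\{\xx : A\xx\le 0\}$, which we read as the usual $\{\xx: A\xx\le b\}$ after homogenizing. The cleanest route is to first prove the conic version (Farkas–Minkowski–Weyl): a cone $C\subseteq\mathbb R^{m}$ is finitely generated if and only if it is polyhedral, $C=\{\mathbf y: M\mathbf y\le 0\}$. We then pass to polyhedra by homogenization.

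For the conic statement, the direction ``polyhedral $\Rightarrow$ finitely generated'' works as follows. Suppose first that $C=\{\mathbf y:M\mathbf y\le0\}$ is pointed. We show every $\mathbf y\in C$ is a nonnegative combination of the extreme rays, namely the one-dimensional faces obtained by setting $m-1$ linearly independent inequalities of $M$ to equality. There are finitely many such choices, so the generating set is finite. The argument is by induction on the dimension of the minimal face containing $\mathbf y$: move along a line in that face until a new inequality becomes tight, and write $\mathbf y$ as a combination of two points in lower-dimensional faces. For the non-pointed case, split off the lineality space $\ker M$, spanned by $\pm$ a basis. For the converse, ``finitely generated $\Rightarrow$ polyhedral'', we use the Fourier–Motzkin elimination. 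Then $\mathrm{cone}(B)$ is the projection of $\{(\mathbf y,\lambda):\mathbf y=B\lambda,\ \lambda\ge0\}$, and projections of polyhedral cones are polyhedral. Alternatively, the polar of $\mathrm{cone}(B)$ is $\{\mathbf z: B^T\mathbf z\le0\}$, which is finitely generated by the first direction, and applying Farkas' lemma to polars returns $\mathrm{cone}(B)$ as polyhedral.

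To pass to polyhedra, take $P=\{\xx:A\xx\le b\}$ and form $C=\{(\xx,t): A\xx-bt\le0,\ t\ge0\}$. This $C$ is a polyhedral cone, so it is generated by finitely many vectors $(\mathbf a_i,1)$ and $(\mathbf b_j,0)$ after rescaling the generators with positive last coordinate. Intersecting with $t=1$ gives $P=\mathrm{conv}(A')+\mathrm{cone}(B')$, and $A'$ is nonempty because $P\neq\emptyset$. Conversely, given $P_{(A,B)}$, the cone generated by $\{(\mathbf a,1)\}\cup\{(\mathbf b,0)\}$ is finitely generated, hence polyhedral. Its slice at $t=1$ is exactly $P_{(A,B)}$ and is cut out by linear inequalities, so it is a polyhedron.

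The main obstacle is the step ``finitely generated $\Rightarrow$ polyhedral''. I would handle it via Fourier–Motzkin elimination, checking that eliminating one variable from a system of homogeneous inequalities yields an equivalent finite system, which is a routine induction. Since the statement is the classical result cited from Schrijver, in the paper itself I would simply invoke it.
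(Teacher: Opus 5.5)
Your proposal is correct and follows essentially the route of the cited source (the paper gives no proof of its own). Schrijver obtains this decomposition theorem exactly as you do, by applying the conic Farkas--Minkowski--Weyl theorem to the homogenization $\{(\xx,t): A\xx-bt\le 0,\ t\ge 0\}$ and slicing at $t=1$. Your reading of the paper's definition as $\{\xx: A\xx\le b\}$ is also the right one, since with the literal homogeneous definition the statement would fail (e.g.\ for a single nonzero point). The only step to tighten is the pointed-cone induction: choose the direction $z$ in the face $F$ with $z\notin F$ and $-z\notin F$, so that the line through $\mathbf y$ actually meets a new tight inequality on both sides.
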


We say that $P_{(A,B)}$ is \emph{presented} by $(A,B)$.
The polyhedra considered in this paper will all be in $\mathbb R_{\geq0}^n$ and hence pointed, so we cite a version of~{\cite[Theorem 8.5]{Schrijver}} specific to pointed polyhedra.

\begin{thm}[{\cite[Theorem 8.5]{Schrijver}}]
	\label{thm:sch-pres}
	Let $P$ be a nonempty pointed polyhedron in $\mathbb R^n$.
	Let $A$ be the set of vertices of $P$.
	Let $B\subseteq \mathbb R^n$ be minimal under inclusion with the property that
	\[
		P=\textup{conv}\big(A\big)+\textup{cone}\big(B\big).
	\]
	Then the set $B$ is unique up to positive scaling of its vectors.
\end{thm}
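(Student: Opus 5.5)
The idea is to identify the minimal sets $B$ with the extremal rays of $P$, namely the one-dimensional faces of the recession cone $\textup{rec}(P)=\{\xx : A\xx\leq 0\}$. That cone is intrinsic to $P$ and is pointed, since $P$ is pointed.

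\emph{Step 1: reduce to the recession cone.} Let $P=\textup{conv}(A)+\textup{cone}(B)$, where $A$ is the vertex set. Then $\textup{cone}(B)=\textup{rec}(P)$. The inclusion $\subseteq$ holds because each $b\in B$ satisfies $a+tb\in P$ for all $t\geq0$. For the reverse inclusion, take $y\in\textup{rec}(P)$ and a point $a\in A$. For large $t$ write $a+ty=c_t+d_t$ with $c_t\in\textup{conv}(A)$ and $d_t\in\textup{cone}(B)$. Dividing by $t$ and letting $t\to\infty$, the bounded part $c_t/t$ vanishes. Since $\textup{cone}(B)$ is finitely generated, hence closed, we get $y\in\textup{cone}(B)$.

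\emph{Step 2: every generating set contains the extremal rays.} Let $C=\textup{rec}(P)$. Pointedness of $P$ implies $C$ contains no line: a line in $C$ would be a lineality direction of $P$, and then $P$ would have no vertices. For an extremal ray $r$ of $C$, write a nonzero $v\in r$ as $v=\sum\lambda_b b$ with $\lambda_b>0$ and $b\in B$. Extremality forces every such $b$ to lie in $r$. Hence $B$ contains a positive multiple of each extremal ray.

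\emph{Step 3: the extremal rays suffice, and minimality forces equality.} A pointed cone is generated by its extremal rays; this is the standard Minkowski argument, proved by induction on dimension via a hyperplane section of $C$, which is a polytope whose vertices correspond to the extremal rays of $C$. So one representative per extremal ray already generates $C$. By Step 1, such a set $B'$ satisfies $P=\textup{conv}(A)+\textup{cone}(B')$.

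If a minimal $B$ contained a vector outside the extremal rays, that vector could be removed. The remaining vectors still contain representatives of all extremal rays by Step 2, so they still generate $C$, contradicting minimality. Two representatives of the same ray likewise contradict minimality. Therefore $B$ consists of exactly one positive multiple of each extremal ray, which gives uniqueness up to positive scaling.

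The main obstacle is Step 3, the generation of a pointed cone by its extremal rays, which requires the hyperplane-section argument. I would cite it from standard theory rather than reprove it.
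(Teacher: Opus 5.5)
The paper gives no proof of this statement; it simply quotes Schrijver, where the result comes out of the decomposition theorem by choosing one vector in each minimal proper face of the characteristic cone $\{\xx : A\xx\leq 0\}$. Your argument is correct and follows essentially this standard route. Step~1 shows that $\textup{cone}(B)$ must equal the recession cone $C$, whichever presenting $B$ is chosen. Steps~2--3 then identify the inclusion-minimal generating sets of the pointed polyhedral cone $C$ with sets containing exactly one nonzero vector from each extremal ray.

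Two small remarks. First, Step~1 uses that $\textup{cone}(B)$ is closed, i.e.\ that $B$ is finite. This is implicit in the paper's framework, since presentations $P_{(A,B)}$ are only defined for finite $B$, but you should say so.

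Second, the ingredient you plan to cite (a pointed cone is generated by its extremal rays) is not actually needed.
\begin{itemize}
\item Minimality gives $b\notin\textup{cone}(B\setminus\{b\})$ for every $b\in B$, and in particular $b\neq 0$.
\item If $b=u+w$ with $u,w\in C$, expand $u$ and $w$ over $B$ (possible by Step~1) to get $b=\sum_{c\in B}\mu_c c$.
\item The case $\mu_b<1$ contradicts minimality.
\item The case $\mu_b>1$ puts $-b\in C$, contradicting pointedness.
\item The case $\mu_b=1$ forces $\sum_{c\neq b}\mu_c c=0$, hence $\mu_c=0$ for all $c\neq b$ by pointedness, so $u,w\in\mathbb R_{\geq0}b$.
\end{itemize}
Thus every element of a minimal $B$ already spans an extremal ray. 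Combined with your Step~2, and the remark that two vectors on one ray make one of them redundant, this finishes the proof using only pointedness and finiteness of $B$.
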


In the notation of Theorem~\ref{thm:sch-pres}, we call the elements of $B$ (up to positive scaling) the \emph{extremal rays} of $P$ and we consider the vertices $A$ and extremal rays $B$ to be the \emph{minimal presentation} of $P$.
We call $\textup{cone}\big(B\big)$ the \emph{recession cone} of $P$.

\subsection{Simplices and simplihedra}

We prove the following proposition.

\begin{prop}\label{prop:simplicial}
	Let $P$ be a pointed polyhedron with vertex set $A$ and extremal ray set $B$. The following are equivalent:
	\begin{enumerate}
		\item Every point $\xx\in P$ has a unique presentation
			\[\xx=\sum_{v\in A}a_vv+\sum_{r\in B}a_rr\]
			where $\sum_{v\in A}a_v=1$ and all coefficients $a_v$ and $a_s$ are nonnegative.
		\item The set of nonempty faces of $P$ is
			\[\{P_{(A',B')}\ :\ \emptyset\subsetneq A'\subseteq A\textup{ and }B'\subseteq B\}.\]
	\end{enumerate}
\end{prop}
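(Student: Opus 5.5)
The plan is to recast both conditions as statements about affine/linear independence of the generators. Homogenize by setting $\hat v=(v,1)$ for $v\in A$ and $\hat r=(r,0)$ for $r\in B$. Then $\hat P:=\textup{cone}(\{\hat v\}\cup\{\hat r\})$ is a pointed cone whose extremal rays are exactly these vectors, and $P$ is its slice at height $1$. Under this correspondence, condition (1) says that every point of $\hat P$ is a \emph{unique} nonnegative combination of the $\hat v,\hat r$. Condition (2) says that every subset of the generators spans a face of $\hat P$, and distinct subsets give distinct faces; the requirement $A'\neq\emptyset$ just discards the faces lying at height $0$, i.e.\ the empty faces of $P$. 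Both conditions will be shown equivalent to the statement that $\{\hat v\}\cup\{\hat r\}$ is linearly independent, i.e.\ that $\hat P$ is a simplicial cone.

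\textbf{(1)$\Rightarrow$(2).} First, uniqueness forces linear independence. Suppose there were a nontrivial relation $\sum c_v\hat v+\sum c_r\hat r=0$. Take $\xx$ to be the barycentric point with all coefficients equal to $1/|A|$ on the vertices and $1$ on the rays. Perturbing its coefficients by $\epsilon c$ for small $\epsilon$ keeps them nonnegative, and the height-$1$ coordinate shows $\sum c_v=0$, so the perturbed coefficients still satisfy $\sum a_v=1$. This gives a second presentation of $\xx$, contradicting (1).

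With independence established, fix a linear functional $\ell$ that is zero on the chosen generators in $A'\cup B'$ and positive on the remaining generators. It exists because the generators are independent. The face of $\hat P$ cut out by $\ell$ is exactly the cone on the chosen generators, and slicing at height $1$ gives $P_{(A',B')}$.

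Conversely, let $Q$ be any nonempty face of $P$. Then $Q$ is pointed, and its vertices and extremal rays are vertices and extremal rays of $P$ (a standard fact). By Theorem~\ref{thm:decomposition} and Theorem~\ref{thm:sch-pres}, $Q=P_{(A',B')}$ for those subsets, with $A'\neq\emptyset$ because $Q$ is nonempty and pointed.

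\textbf{(2)$\Rightarrow$(1).} Existence of a presentation is Theorem~\ref{thm:sch-pres}, so only uniqueness needs proof, and for that I will show linear independence of the homogenized generators. Condition (2) gives an injective, order-preserving map from subsets of $A\sqcup B$ (with $A'\neq\emptyset$) to faces of $P$. Extend it to the cone $\hat P$ by adding the height-$0$ faces: these are the faces of the recession cone, and they correspond to $A'=\emptyset$.

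I would then argue by induction on $|A|+|B|$ that a pointed cone whose face lattice contains a full Boolean lattice on its extremal rays is simplicial. The facets are the sets missing exactly one generator. For each generator $g$, the facet omitting $g$ has a supporting functional $\ell_g$ vanishing on all the other generators and positive on $g$, since $g$ is not in that face. The collection $\{\ell_g\}$ is then dual to the generators, which forces the generators to be linearly independent.

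The main obstacle is a subtle point in this step. Condition (2) lists the faces as sets $P_{(A',B')}$, but it does not obviously assert that distinct pairs $(A',B')$ give distinct faces. I need the face omitting $g$ not to contain $g$, which requires $g\notin P_{(A\setminus\{g\},B)}$ (or the analogous set with $B\setminus\{g\}$). This follows from extremality: a vertex cannot be written as a convex-plus-conic combination of the other generators, and an extremal ray cannot lie in the cone of the others plus translates. Making this rigorous, including the degenerate cases $|A|=1$ and $B$ empty, is where the care is needed.
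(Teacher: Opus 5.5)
Your proof is correct. It differs from the paper's mainly in (1)$\Rightarrow$(2). The paper observes that uniqueness makes the coefficient map $\xx\mapsto(a_s)_{s\in A\cup B}$ an affine isomorphism from $P$ onto the product of the standard simplex in $\mathbb R^A$ with the orthant $\mathbb R^B_{\geq0}$, and then reads off the faces of that product in one line. You instead homogenize, extract linear independence of the $\hat v,\hat r$ from uniqueness by the perturbation trick, and build a supporting functional for each $P_{(A',B')}$. You then get the reverse inclusion from the standard fact that vertices and extremal rays of a face of $P$ are vertices and extremal rays of $P$. The paper's route is shorter and needs no facts about faces of faces; yours makes the simplicial-cone structure explicit.

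For (2)$\Rightarrow$(1) the two arguments are the same idea. The paper takes functionals $\phi_r,\phi_v$ supporting $P_{(A,B\setminus\{r\})}$ and $P_{(A\setminus\{v\},B)}$ and solves for each coefficient. After homogenizing $\phi_r-c_r$ and $\phi_v-c_v$, these are exactly your dual family $\{\ell_g\}$. No induction is needed in either version.

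The point you flag as the main obstacle is routine. The paper relies on it silently when it divides by $\phi_r(r)$ and by $\phi_v(v)-c_v$.
\begin{itemize}
\item Rays: the height-$0$ points of $\textup{cone}(\hat A\cup\hat B\setminus\{\hat r\})$ are exactly $\textup{cone}(B\setminus\{r\})\times\{0\}$. So $\hat r$ lying in that cone would mean $r\in\textup{cone}(B\setminus\{r\})$, contradicting the minimality of $B$ in Theorem~\ref{thm:sch-pres}.
\item Vertices: suppose $v=y+d$ with $y\in\textup{conv}(A\setminus\{v\})$ and $d\in\textup{cone}(B)$. If $d\neq0$, then $v=\tfrac12\big(y+(y+2d)\big)$ is a nontrivial midpoint of two points of $P$. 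If $d=0$, then $v$ is a convex combination of other vertices. Both are impossible for a vertex.
\item When $|A|=1$, the height coordinate itself serves as $\ell_{\hat v}$.
\end{itemize}
Full injectivity of $(A',B')\mapsto P_{(A',B')}$ is never needed.
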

\begin{proof}
	Suppose (1) is true with the intent to prove (2). 
	Define a map $\phi$ sending a point 
	$\xx=\sum_{v\in A}a_vv+\sum_{r\in B}a_rr$ of $P$ to the coordinate $(a_s)_{s\in A\cup B}$ of $\mathbb R^{A\cup B}$.
	It may be checked that $P$ is an affine equivalence from $P$ onto its image, the set of points $(a_s)_{s\in A\cup B}\in\mathbb R^{A\cup B}$ such that all coordinates $a_s$ are nonnegative and $\sum_{s\in A}a_s=1$. This image is the Cartesian product of the standard simplex in $\mathbb R^A$ with the nonnegative cone in $\mathbb R^B$. It is immediate that the nonempty faces of this Cartesian product are as described in (2).

	Now suppose (2) with the intent to prove (1).
	For any $r\in B$, the fact that $P_{(A,B\backslash\{r\})}$ is a nonempty face of $P$ implies that there exists a linear functional $\phi_r$ whose minimum on $P$ is achieved precisely on the face $P_{(A,B\backslash\{r\})}$.
	In particular, this implies that $\phi_r(r')=0$ for all $r'\in B\backslash\{r\}$ and that $\phi_r(v_1)=\phi_r(v_2)$ for any $v_1,v_2\in A$. Define the constant $c_r$ to be $\phi_r(v)$ for a vertex $v\in A$.
	Similarly, if $|A|>1$ then for any $v\in A$ there exists a linear functional $\phi_v$ whose minimum on $P$ is achieved precisely on the face $P_{(A\backslash\{v\},v)}$.
	Then $\phi_v(r)=0$ for all $r\in B$ and $\phi_v(v_1)=\phi_v(v_2)$ for all $v_1,v_2\in A$ distinct from $v$. Define the constant $c_v$ to be $\phi_v(v')$ for $v'\in A\backslash\{v\}$.

	Pick a point $\xx\in P$.
	Then it may be presented 
	$\xx=\sum_{v\in A}a_vv+\sum_{r\in B}a_rr$
			where $\sum_{v\in A}a_v=1$ and all coefficients $a_v$ and $a_r$ are nonnegative.
	To show (1), we must show that the coefficients $a_s$ for $s\in A\cup B$ are determined by $\xx$.
	First, take $r\in B$ and observe that
	\begin{align*}
		\phi_r(\xx)&=\sum_{v\in A}a_v\phi_r(v)+\sum_{r'\in B}a_{r'}\phi_r(r')\\
		&=\sum_{v\in A}a_vc_r+a_r\phi_r(r)\\
		&=c_r+a_r\phi_r(r).
	\end{align*}
	Solving for $a_r$ gives $a_r=\frac{\phi_r(\xx)-c_r}{\phi_r(r)}$.

	Now, take $v\in A$. If $|A|=1$, then it is immediate that $a_v=1$ and there is nothing more to show. Otherwise,
	\begin{align*}
		\phi_v(\xx)&=\sum_{v'\in A}a_{v'}\phi_v(v')+\sum_{r\in B}a_r\phi_v(r)\\
		&=a_v\phi_v(v)+(1-a_v)c_v.
	\end{align*}
	Solving for $a_v$ gives
	$a_v=\frac{\phi_v(\xx)-c_v}{\phi_v(v)-c_v}$.

	We have obtained formulas for the coefficients $(a_s)_{s\in A\cup B}$ depending only on $\xx$ and the functionals $\phi_r$ and $\phi_v$, hence the presentation
	$\xx=\sum_{v\in A}a_vv+\sum_{r\in B}a_rr$ is unique. This completes the proof.
\end{proof}

We call a polyhedron \emph{simplicial} if it satisfies the conditions of Proposition~\ref{prop:simplicial}. We also say that $P$ is a \emph{simplihedron}. A \emph{simplex} is a bounded simplihedron.

\subsection{Subdivisions of polyhedra}

We now make clear what we ask of a subdivision of a polyhedron, following~\cite{BERG}.

\begin{defn}\label{defn:subd}
	Let $P$ be a lattice polyhedron. A \emph{subdivision} of $P$ is a set $\mathcal{S}$ of polyhedra (called ``cells'') satisfying:
\begin{enumerate}
	\item \textbf{Density:} $\cup_{Q\in\mathcal{S}}Q$ is a dense subset of $P$,
	\item \textbf{Strong Intersection Property:} for distinct $Q_1,Q_2\in\mathcal{S}$, the polyhedron $Q_1 \cap Q_2$ is a face of $Q_1$ and $Q_2$.
\end{enumerate}
	The subdivision is \emph{simplicial} if all cells are simplihedra.
	The subdivision is \emph{complete} if $P=\cup_{Q\in\mathcal{S}}Q$.
	A \emph{triangulation} is a subdivision in which each cell is a (bounded) simplex of dimension $\text{dim}(P)$.
	A triangulation is \emph{unimodular} if each cell has normalized volume 1 (equivalently, if the traditional volume of each cell within the affine span $\textup{aff}(P)$ is the factorial of $\dim P$).
	As an abuse of notation, we will also use the word (resp. unimodular) triangulation to refer to the cone over a (resp. unimodular) triangulation.
\end{defn}

Other sources may require all subdivisions to be complete, or require each cell of a subdivision to be full-dimensional.

\section{Background on Framed Directed Graphs and Flow Polyhedra}
\label{sec:back-dag}

We give background on framed directed graphs, focusing on framed DAGs~\cite{DKK} and the induced regular unimodular triangulations on flow polytopes.

\subsection{Flow polyhedra}

Let $G=(V,E)$ be a finite directed graph with vertex set $V$ and edge set $E$. 
We consider each edge $e\in E$  to begin at its \emph{tail} $t(e)$ and end at its \emph{head} $h(e)$.
For each $v\in V$, let $\inn(v):=\{e\in E\ :\ h(e)=v\}$ and $\out(v):=\{e\in E\ :\ t(e)=v\}$ denote the incoming and outgoing edges of $v$, respectively. A vertex $v$ is called a \emph{source} if $\inn(v)=\emptyset$ and it is called a \emph{sink} if $\out(v)=\emptyset$.
All other vertices are called \emph{internal vertices}. 
An edge $e\in E$ is \emph{internal} if it is between two internal vertices, and otherwise is a \emph{source/sink edge}. More specifically, it is either a \emph{source edge} (if its tail is a source) and/or a \emph{sink edge} (if its head is a sink). 
A \emph{route} of $G$ is a maximal (directed) path in $G$.
An edge $e\in E$ is \emph{idle} if it is the only incoming edge to its internal head, or the only outgoing edge to its internal tail.

\begin{defn}\label{defn:flow-polytope}
	A \emph{flow} $F$ on a directed graph $G$ is a function $F:E\to\mathbb R$ which preserves flow at each internal vertex, i.e., for every internal vertex $v$ we have
	\[\sum_{e\in\inn(v)}F(e)=\sum_{e\in\out(v)}F(e).\]
	A flow $F$ is \emph{nonnegative} if $F(e)\geq0$ for all edges $e\in E$.
	The \emph{cone of (nonnegative) flows} $\F_{\geq0}(G)$ is the space of nonnegative flows on $G$. The \emph{(unit) flow polyhedron} $\F_1=\F_1(G)$ is the set of all \emph{unit} nonnegative flows of $G$, i.e., nonnegative flows satisfying
	\[\sum_{\substack{v\text{ is a source}\\ e\in\out(v)}}F(e)=1.\]
	If $G$ is acyclic, then $\F_1(G)$ is a polytope and we call it the \emph{flow polytope} of $G$. If $G$ has a cycle, then $\F_1(G)$ is an unbounded polyhedron.
\end{defn}

Refer to the right of Figure~\ref{fig:flow} for examples of a unit flow (labelled in blue) on a directed graph.

\begin{remk}
	More generally, given a directed graph with a netflow vector $\mathbf{a}$ on its vertices one defines an $\mathbf{a}$-flow to be a labelling $F$ of the edges of $E$ satisfying
	\[
		\mathbf{a}_v=\sum_{e\in\out(v)}F(e)-\sum_{e\in\inn(v)}F(e)
	\]
	at each vertex $v\in V$. A unit flow on a DAG $G$ in the sense of Definition~\ref{defn:flow-polytope} is then a $(1,0,\dots,0,-1)$-flow on the DAG $G'$ obtained by identifying all sources of $G$ to a vertex with netflow 1 and all sinks of $G$ to a vertex with netflow -1 (while giving all internal vertices netflow 0). 
\end{remk}

We call a directed graph \emph{convenient} if every source vertex and every sink vertex is incident to only one edge. If $G$ is an arbitrary directed graph, we may separate the source and sink vertices to obtain a convenient $G'$ with the same flow polyhedron.
Because of this, we will often restrict our attention to convenient directed graphs in this article; this does not restrict the combinatorics of framed directed graphs and it makes the directed graph pictures line up better with fringed algebras and with our novel definition of turbulence charts.

Given a route $p$ of $G$, the \emph{indicator vector} $\I(p)$ with 1's in the coordinates of edges used by $p$ and 0's elsewhere is a vertex of $\F_1(G)$. 
All vertices of $\F_1(G)$ are obtained in this way. 
Moreover, if $G$ is a DAG then $\I$ is a bijection from routes of $G$ to vertices of $\F_1(G)$.

\subsection{Framing triangulations of flow polytopes}
\label{ssec:fdag}

Framed DAGs were defined in~\cite{DKK} and used to induce regular unimodular triangulations on the associated flow polytope.
We give here a more general definition allowing oriented cycles in the underlying graphs.

\begin{defn}\label{defn:framingdag}
	Let $G$ be a directed graph.  For each internal vertex $v$ of $G$, assign a linear order to the edges in $\inn(v)$ and assign a linear order to the edges in $\out(v)$. This assignment is called a \emph{framing} on $G$, which we denote by $\mathfrak{F}$. We call a directed graph (resp. DAG) $G$ with a framing $\mathfrak{F}$ a \emph{framed directed graph} (resp. \emph{framed DAG}) and write $(G,\mathfrak{F})$. At times, we will use the single symbol $\Gamma=(G,\R)$ to refer to a framed DAG. If $e$ is less than $f$ in the linear order for $\mathfrak{F}$ on $\inn(v)$, we write $e<_{\mathfrak{F},\inn(v)}f$ (and similarly for $\out(v)$). When $\mathfrak{F}$ and/or $\inn(v)$ or $\out(v)$ is clear, we may drop one or both subscripts.
\end{defn}

In this paper, to denote a framing we label the internal half-edges of a directed graph with integers. See Figure~\ref{fig:sqdag}.
For the rest of this subsection, we focus on framed DAGs.

\begin{defn}\label{defn:compatible}
	Let $(G,\R)$ be a framed DAG.
	Let $R$ be a path in $(G,\R)$ from $v$ to $w$, where both vertices are internal (we may have $v=w$, in which case $R$ is the empty path from $v$ to $v$).
	Let $e_1$ and $e_2$ be edges of $G$ ending at $v$ such that $e_1<_{\mathfrak{F},\inn(v)}e_2$. Let $f_1$ and $f_2$ be edges of $G$ starting at $w$ such that $f_1<_{\mathfrak{F},\out(v)}f_2$.
	We say that $(e_1Rf_2,e_2Rf_1)$ is an \emph{incompatibility}.
	Two paths (or, indeed, routes) $p$ and $q$ are \emph{incompatible} if there exist subpaths $p'$ of $p$ and $q'$ of $q$ such that $(p',q')$ is an incompatibility. Otherwise, $p$ and $q$ are \emph{compatible}.
\end{defn}

A \emph{clique} is a set of pairwise-compatible routes in $(G,\R)$.
The \emph{clique complex} of $(G,\R)$ is the simplicial complex of cliques of $(G,\R)$.

Given an embedding of a DAG $G$ into the page with all edges moving left to right, one may define the \emph{planar framing} $\R_{\text{pl}}$ which orders the incoming (resp. outgoing) edges at each vertex $v$ from bottom to top in the order $<_{\mathfrak{F}_{\textup{pl}}}$. Cliques of $(G,\R_{\text{pl}})$ are collections of routes which can be drawn on the DAG $G$ without any crossings along edges of $G$ -- see Figure~\ref{fig:sqdag}.

Recall that taking indicator vectors gives a bijection from routes of $G$ onto vertices of $\F_1(G)$. Through this correspondence, we may view a maximal clique of $(G,\R)$ as a collection of vertices of $\F_1(G)$ which form a simplex. The set of such simplices forms a regular unimodular triangulation of $\F_1(G)$, called the \emph{framing triangulation}.
We refer to~\cite[\S2.2]{LRS} for the definition of a regular triangulation.

\begin{thm}[{\cite[Theorem 1, Theorem 2]{DKK}}]
	\label{thm:dkk}
	Let $(G,\R)$ be a framed DAG. The set of maximal cliques of $(G,\R)$ forms a regular unimodular triangulation of the flow polytope $\F_1(G)$.
\end{thm}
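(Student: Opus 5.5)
We prove Theorem~\ref{thm:dkk} directly, verifying the conditions of Definition~\ref{defn:subd} together with unimodularity; regularity comes from an explicit height function. Fix a framed DAG $(G,\R)$, convenient without loss of generality. The first step is the \emph{uniqueness} statement. Every nonnegative flow $F\in\F_{\geq0}(G)$ has at most one expression $F=\sum_{p\in\K}a_p\I(p)$ with $\K$ a clique and all $a_p>0$. To see this, note that a clique is totally ordered at each internal vertex $v$. Comparing two routes through $v$, take the order of their incoming edges, breaking ties by the (path-reversed) history. Compatibility ensures that this order matches the order of their outgoing edges, breaking ties by future. Consequently, the routes of a clique passing through $v$ are sorted compatibly on both sides. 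The flow entering $v$ along each incoming edge, in framing order, must then be sent to the outgoing edges in framing order, which is a ``greedy'' matching of intervals. Inducting along a topological order of $G$, the coefficients $a_p$ are forced. Concretely, one reconstructs a unique decomposition by treating flow as stacked intervals $[0,\text{total}]$ subdivided at each vertex according to the framing.

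The second step is \emph{existence}, which gives density and in fact completeness. Every unit flow $F$ decomposes as a clique combination. Construct this by the interval picture: assign to each source edge a subinterval of $[0,1]$, ordered by any fixed order of sources. At each internal vertex, concatenate the incoming intervals in framing order into $[0,\sum F]$, then cut this interval into consecutive pieces of lengths $F(f)$ for the outgoing edges $f$ in framing order. Each point $t\in[0,1]$ traces a route $p_t$, and the map $t\mapsto p_t$ is piecewise constant with finitely many values. Thus $F=\sum_p \lambda(\{t:p_t=p\})\,\I(p)$. Two routes $p_s,p_t$ with $s<t$ are compatible, because at every vertex the relative order of $s$ and $t$ is preserved, which rules out the crossing pattern of Definition~\ref{defn:compatible}. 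Integrality follows because all cut points are integers when $F$ is integral.

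The third step combines the first two. Uniqueness shows that the indicator vectors of a clique are affinely independent, so each maximal clique spans a simplex. It also shows that two cells meet in the common face spanned by the intersection of the cliques, which is the strong intersection property. Existence shows the cells cover $\F_1(G)$. Unimodularity follows from integrality: the lattice points of the affine span lie in the integer span of any maximal clique's vertices. Full dimensionality of maximal cliques follows because covering forces some simplex of full dimension near each generic point, and maximality plus uniqueness lets every clique extend to one. Finally, regularity requires a height function $h$ on routes, for instance $h(p)=\sum$ over pairs of half-edge positions along $p$ of a strictly convex function of the framing ranks. One would check that each incompatible pair $p,q$ satisfies $h(p)+h(q)>h(p')+h(q')$, where $p',q'$ are obtained by swapping tails at the conflict. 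These swapped pairs are compatible, or strictly closer to being so.

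I expect this regularity step to be the main obstacle. Defining $h$ so that every swap strictly decreases height, and verifying that the lower hull faces are exactly the cliques, requires care. The fallback is to cite \cite{DKK} for regularity while proving the subdivision and unimodularity self-containedly as above.
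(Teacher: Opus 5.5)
The paper gives no proof of this statement; it is quoted from \cite{DKK}. The closest in-paper argument is Corollary~\ref{cor:tri} transported through Proposition~\ref{prop:dg-to-chart}. That reduces, via a gentle envelope, to the fringed-algebra results of \cite{BERG}, and yields that maximal cliques give a complete unimodular triangulation. It says nothing about regularity.

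For that part, your stacking argument is a genuinely different and much more elementary route, and it is essentially sound. The greedy interval construction gives existence, completeness and integrality at once. The topological induction over histories forces all coefficients independently of the clique. Affine independence, the strong intersection property and full-dimensionality then follow as you say. Two repairs are needed:
\begin{enumerate}
\item The relative order of particles $s<t$ is \emph{not} preserved at every vertex; it is reset by the in-framing whenever they arrive along different edges. The correct invariant is that their stack order is constant along a shared segment $R$. It is fixed at the first vertex of $R$ by the in-framing, so the lower particle leaves along the lower edge at the last vertex, which is exactly the non-crossing pattern.
\item Prove full-dimensionality before unimodularity. Then write a lattice point $x$ of the affine hull as $\sum_{p\in\K}c_p\I(p)$, and apply uniqueness and integrality to the positive combination $x+N\sum_{p\in\K}\I(p)$ for $N\gg0$.
\end{enumerate}
What your route costs is generality. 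Particle tracing and the topological induction use acyclicity and a global orientation, which is precisely what the paper's reduction to gentle algebras is built to avoid.

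The genuine gap is regularity, which is part of the statement and which you only conjecture or defer to \cite{DKK}. Your height is not yet a proof, for a concrete reason. Write the conflict as $p=Ae_1Rf_2B$, $q=Ce_2Rf_1D$, with swap $p'=Ae_1Rf_1D$, $q'=Ce_2Rf_2B$. Anything additive over edges or half-edges cancels, because $\I(p)+\I(q)=\I(p')+\I(q')$. When $R$ is nonlazy, so does anything depending only on the in/out pair at a single vertex. So $h$ must couple the in-rank at $v$ with the out-rank at $w$ through a strictly submodular interaction such as $-ab$. But then every pair $(x,y)$ with $x$ in the prefix up to $v$ and $y$ in the suffix from $w$ also survives the swap, with no sign control. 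You therefore need weights that make the innermost pair $(v,w)$ dominate.

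One choice that works: fix a topological numbering $\nu$ of the vertices and set $h(p)=-\sum_{x\preceq y}M^{-(\nu(y)-\nu(x))}\operatorname{rk}(\mathrm{in}_p(x))\operatorname{rk}(\mathrm{out}_p(y))$. The sum runs over pairs of internal vertices of $p$ in path order, and $\operatorname{rk}$ is the position, in the framing order, of the edge by which $p$ enters $x$, resp.\ leaves $y$. With this $h$:
\begin{enumerate}
\item all terms outside the prefix-by-suffix block cancel;
\item the $(v,w)$ term contributes $M^{-g}(\operatorname{rk}e_2-\operatorname{rk}e_1)(\operatorname{rk}f_2-\operatorname{rk}f_1)\geq M^{-g}$, where $g=\nu(w)-\nu(v)$;
\item every other surviving term has weight at most $M^{-g-1}$.
\end{enumerate}
Hence $h(p)+h(q)>h(p')+h(q')$ for $M$ large.

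You also do not need $p',q'$ to be compatible or ``closer''. Suppose a lower face of the lifted configuration, with supporting affine function $\ell$, contained $p$ and $q$. Then $h(p)+h(q)=\ell(\I(p'))+\ell(\I(q'))\leq h(p')+h(q')$, a contradiction. So every maximal cell of the regular subdivision is a full-dimensional clique simplex, and by your triangulation result the two subdivisions coincide.
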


See the framed DAG on left of Figure~\ref{fig:sqdag} whose flow polytope is the square in the middle of the figure. There are two maximal cliques; the top induces the red simplex, and the bottom induces the blue simplex.

\begin{figure}
	\centering
	\def\svgscale{.3}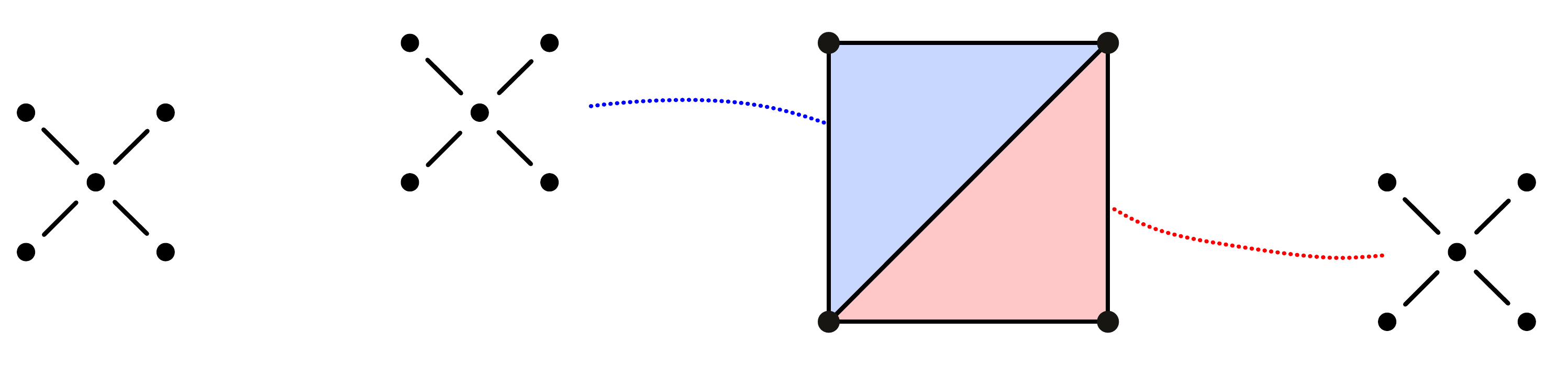
	\caption{A framed DAG with framing marked in red on the left with its maximal cliques and framing-triangulated flow polytope on the right.}
	\label{fig:sqdag}
\end{figure}

\subsection{Bundles on Framed Directed Graphs}
\label{ssec:frameddg}

We now extend the definitions of the previous subsection to framed directed graphs which may have cycles. Framing triangulation results are not known in the literature for general framed directed graphs, but were proven in~\cite{BERG} for a subclass of framed directed graphs known as \emph{gentle}. In Section~\ref{sec:specialization} we will get subdivision and presentation results for general framed directed graphs as a consequence of our results on framed turbulence charts.
Let $(G,\R)$ be a framed directed graph.

\begin{defn}
A \emph{route} of a directed graph is a path from a source vertex do a sink vertex. A \emph{band} $B$ of a directed graph is a path of the form $e_1e_2\dots e_m$, where $h(e_m)=t(e_1)$ and $B$ is not a power of a strictly smaller cycle. We consider bands only up to cyclic equivalence, so that $B$ is equivalent to the cycle $e_je_{j+1}\dots e_me_1\dots e_{j-1}$ for any $j\in[m]$. A subpath of the band $B$ is a subpath of any power of the underlying cycle of $B$. A \emph{trail} of a directed graph is a route or band.
\end{defn}

Two trails $p$ and $q$ are \emph{incompatible} if there exist subpaths $p'$ of $p$ and $q'$ of $q$ which are incompatible as in Definition~\ref{defn:compatible}.
A \emph{clique} is a set of pairwise-compatible routes of $(G,\R)$, and a \emph{bundle} is a set of pairwise-compatible trails of $(G,\R)$ (where bands are considered up to cyclic equivalence).
The \emph{bundle complex} of $(G,\R)$ is the simplicial complex of bundles of $(G,\R)$.

In~\cite{BERG}, it was shown that the clique and bundle complexes of certain framed directed graphs with cycles index subdivisions of the underlying flow polyhedra. We will give more detail about this in Section~\ref{ssec:gentlyframeddag}.

\section{Background on Gentle Algebras and their Turbulence Polyhedra}
\label{sec:back-turb}

We give background on gentle algebras, their fringed algebras, and their turbulence polyhedra, following~\cite{BERG}. Turbulence polyhedra of gentle algebras will be a special case of the turbulence polyhedra defined in this paper.

\subsection{Gentle algebras}

We recall the definition of a gentle algebra from~\cite{AS87}.
A \emph{quiver} is a directed graph.
For an arrow $\alpha$ of a quiver $Q$, we write $h(\alpha)$ for the head (or endpoint) of $\alpha$ and $t(\alpha)$ for the tail (or start point) of $\alpha$. We consider the formal inverse $\alpha^{-1}$ to have head $t(\alpha)$ and tail $h(\alpha)$.

\begin{defn}\label{defn:gentle}
	A finite-dimensional $\k$-algebra $\Lambda$ is \emph{gentle} if it is (Morita equivalent to) a bound path algebra $\k Q/I$, where $Q$ is a quiver and $I$ is an admissible ideal in $\k Q$ such that
	\begin{enumerate}
		\item for each vertex $v\in Q_0$, there are at most two edges starting at $v$ and at most two edges ending at $v$,
		\item for each arrow $\alpha\in Q_1$, there is at most one arrow $\beta$ with $h(\alpha)=t(\beta)$ such that $\alpha\beta\not\in I$, and there is at most one arrow $\gamma$ with $t(\alpha)=h(\gamma)$ such that $\gamma\alpha\not\in I$,
		\item for each arrow $\alpha\in Q_1$, there is at most one arrow $\beta$ with $h(\alpha)=t(\beta)$ such that $\alpha\beta\in I$, and there is at most one arrow $\gamma$ with $t(\alpha)=h(\gamma)$ such that $\gamma\alpha\in I$, and
		\item the ideal $I$ is generated by length-two monomial relations.
	\end{enumerate}
	We say that $(Q,I)$ is a \textit{gentle bound quiver}.
\end{defn}

\begin{remk}\label{remk:locally-gentle}
	A $\k$-algebra $\Lambda$ is \emph{locally gentle} if it satisfies the conditions of Definition~\ref{defn:gentle}, but may or may not be finite-dimensional algebra; i.e., locally gentle algebras are an infinite-dimensional version of gentle algebras. Locally gentle algebras give rise to a non-kissing complex similar to the gentle case~\cite{PPP2019}, and will give examples of our turbulence charts.
\end{remk}

See the left of Figure~\ref{fig:bloss} for an example of a gentle algebra $\L$. The ideal $I$ is generated by the relations $\beta\gamma$ and $\gamma\beta$ of $\L$. The relation $\beta\gamma$ (resp. $\gamma\beta$) is marked visually by a blue arc connecting $\beta$ to $\gamma$ (resp. $\gamma$ to $\beta$) around the vertex $h(\beta)=t(\gamma)$ (resp. $h(\gamma)=t(\beta)$). In the following, we will express the relations of a gentle algebra $\L=\k Q/I$ using this visual convention without writing them in text.

Henceforth, let $\Lambda=\k Q/I$ be a gentle algebra.
We collect some facts about gentle algebras following~\cite{PPP,BERG}.

\begin{defn}
		A \emph{string} of $\Lambda$ is a word of the form $p=\alpha_1^{\e_1}\alpha_2^{\e_2}\dots\alpha_m^{\e_m}$, where
			\begin{enumerate}
				\item $\alpha_i\in Q_1$ and $\e_i\in\{-1,1\}$ for all $i\in[m]$,
				\item $h(\alpha_i^{\e_i})=t(\alpha_{i+1}^{\e_{i+1}})$ for all $i\in[m-1]$,
				\item for any $\beta\gamma\in I$, neither $\beta\gamma$ nor $\gamma^{-1}\beta^{-1}$ appears in $p$, and
				\item no factor $\alpha\alpha^{-1}$ or $\alpha^{-1}\alpha$ appears in $p$, for $\alpha\in Q_1$.
			\end{enumerate}
			The integer $m$ is called the \emph{length} of $p$. We write $t(p):=t(\alpha_1^{\e_1})$ and $h(p):=h(\alpha_m^{\e_m})$. For each vertex $v\in Q_0$, there is a \emph{lazy string} of length zero, denoted by $e_v$, starting and ending at $v$. A \emph{substring} of $p$ is a word of the form $\alpha_i^{\e_i}\dots\alpha_j^{\e_j}$, for some $1\leq i\leq j\leq m$. We also consider a lazy string $e_v$, where $v$ is some vertex through which $p$ passes, to be a substring.
\end{defn}

We consider the the two inverse strings $p$ and $p^{-1}$ to be \emph{equivalent}.
We use the symbol $p^{\pm1}$ to refer to the (equivalent) strings $p$ and $p^{-1}$ simultaneously as an abuse of notation. For example, we may write ``$q$ is a substring of $p^{\pm1}$'' to mean, ``$q$ is a substring of $p$ or $p^{-1}$.''

	The strings of the gentle algebra on the left of Figure~\ref{fig:bloss} up to equivalence are 
	\[e_1\ ,e_2\ ,e_3,\ \alpha,\ \beta,\ \gamma,\ \alpha\beta,\text{ and }\alpha\gamma^{-1}\] (labelling the vertices from 1 to 3, left to right).
	Note, for example, that the string $\alpha$ is equivalent to $\alpha^{-1}$ and $\gamma\alpha^{-1}$ is equivalent to $\alpha\gamma^{-1}$.

In the gentle algebra on the left of Figure~\ref{fig:bloss}, the lazy string $e_2$ at the middle vertex is a bottom substring of $\alpha\gamma^{-1}$ and a top substring of $\beta$.

A \emph{band} of $\Lambda$ is a string $B=\alpha_1^{\e_1}\dots\alpha_m^{\e_m}$ of length at least one such that $h(B)=t(B)$, the walk $\alpha_m^{\e_m}\alpha_1^{\e_1}$ is a string, and $B$ is not itself a power of a strictly smaller string.
The gentle algebra of Figure~\ref{fig:bloss} has no bands.
We consider a \emph{substring} of $B$ to be a substring of any power of $\alpha_1^{\e_1}\dots\alpha_m^{\e_m}$.
Two bands $B$ and $B'$ are \emph{equivalent} if the underlying string of $B'$ is a substring of $B$ or $B^{-1}$ (equivalently, if $B^{\pm1}$ and $B'$ are cyclically equivalent).

Strings and bands of a gentle algebra $\L$ are important because they describe the indecomposable modules over $\L$.
Given a string $p$, one may define the \emph{string module} $M(p)$ over $\Lambda$. We note that $M(p)$ is isomorphic to $M(p^{-1})$, which is why we consider strings up to equivalence.
Similarly, given a band $B$ of a gentle algebra $\Lambda$, a power $n\in\mathbb Z_{\geq0}$, and $\lambda\in\k^\times$, we may define a \textit{band module} $M(B,n,\lambda)$ which depends on $B$ only up to equivalence. The specific constructions of band and string modules are not important in this paper, and hence we refer to~\cite{BR87},~\cite[\S2]{BDMTY},~\cite[\S1]{PPP} for more information on band modules and string modules. For context, we cite the following theorem stating that the combinatorics of strings describes much of the representation theory of $\L$.

\begin{thm}[{\cite[p. 161]{BR87}}]
	The string and band modules are a complete list of the indecomposable $\Lambda$-modules up to isomorphism. Moreover,
	\begin{enumerate}
		\item a string module is never isomorphic to a band module,
		\item two string modules $M(p)$ and $M(p')$ are isomorphic if and only if $p'=p^{\pm1}$, and
		\item two band modules $M(B,n,\lambda)$ and $M(B',n',\lambda')$ are isomorphic if and only if $n=n'$, $\lambda=\lambda'$, and the bands $B$ and $B'$ are equivalent.
	\end{enumerate}
\end{thm}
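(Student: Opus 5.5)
This is the classical classification of modules over string algebras, and gentle algebras are a special case. I would therefore follow the functorial filtration method of Gelfand--Ponomarev as adapted by Butler and Ringel, rather than anything specific to the combinatorics of this paper. The approach has three steps:
\begin{enumerate}
	\item Check directly that each $M(p)$ and each $M(B,n,\lambda)$ is indecomposable.
	\item Show that every finite-dimensional $\Lambda$-module is a direct sum of such modules.
	\item Read off the isomorphism criteria (1)--(3) from invariants that the filtration detects.
\end{enumerate}

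For indecomposability, I would compute endomorphism rings. For a string module, $M(p)$ has a basis indexed by the vertices visited by $p$. Any endomorphism is determined by where it sends basis vectors, and the gentle conditions (at most one continuation not in $I$ and at most one in $I$) force $\textup{End}(M(p))$ to be local. For band modules, $\textup{End}(M(B,n,\lambda))$ surjects onto $\textup{End}_{\k[x]}(\k[x]/(x-\lambda)^n)$ with nilpotent kernel, so it is again local.

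For the decomposition, fix a vertex $v$ and a module $M$. The two maximal (possibly infinite, eventually periodic) ``half-words'' leaving $v$ in the two directions allowed by the relations define a chain of subspaces of $M_v$. Each half-word $C$ gives subfunctors $C^+(M)\subseteq C^-(M)\subseteq M_v$: images and preimages along the arrows of $C$, intersected appropriately. Ordering the half-words lexicographically, using a sign convention on direct versus inverse letters, these subfunctors are totally ordered. For each pair $(C,D)$ of half-words that glue at $v$ into a string or band, I would form the functor
\[
F_{C,D}(M) = \frac{C^-(M)\cap D^-(M)}{(C^+(M)\cap D^-(M))+(C^-(M)\cap D^+(M))}.
\]
I would then show two things. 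First, $M_v$ is filtered with subquotients exactly the $F_{C,D}(M)$. Second, each $F_{C,D}$ vanishes on all string and band modules except those whose word passes through $v$ as $C^{-1}D$, where it is $1$-dimensional on string modules and $n$-dimensional on $M(B,n,\lambda)$, carrying a $\k[x]$-action with eigenvalue $\lambda$.

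The main obstacle is lifting these subquotients to direct summands: given a nonzero $F_{C,D}(M)$, one must produce a split monomorphism from $M(C^{-1}D)$, or from a band module, into $M$. I would try the standard argument. Pick a vector in $C^-(M)\cap D^-(M)$ outside the denominator and propagate it along the arrows of $C$ and $D$ to build a homomorphism $M(C^{-1}D)\to M$. Then use the defining ``covering'' properties to show the composite $M(C^{-1}D)\to M\to M(C^{-1}D)$ is an isomorphism after a suitable choice of retraction. The band case needs extra care, since one must use a Jordan decomposition of the $\k[x]$-action on $F_{B}(M)$. The gentle conditions guarantee that the relevant subfunctor lattices are chains, which is what makes the ``suitable choice'' possible.

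Once the decomposition holds, the isomorphism criteria follow. Statement (1) holds because the functors $F_{C,D}$ separate string words from periodic band words. Statement (2) holds because a string is recovered from the pair $\{C,D\}$ up to swapping, that is, up to inversion $p\leftrightarrow p^{-1}$. Statement (3) holds because the band, up to rotation and inversion, together with the Jordan type $(n,\lambda)$ is recovered from the $\k[x]$-module $F_B(M)$. Krull--Schmidt then gives completeness of the list.
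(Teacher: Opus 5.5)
The paper gives no proof of this theorem. It quotes Butler--Ringel~\cite{BR87}, whose argument is the Gelfand--Ponomarev functorial filtration method you outline, so your architecture is the right one.

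The gap sits exactly at the step you call the main obstacle. You build $\phi\colon M(C^{-1}D)\to M$ from one vector $u$ and then assert that the covering property yields a retraction. But the covering property is a statement about subspaces of $M_v$: every nonzero vector of $M_v$ lies in $C^-(M)\setminus C^+(M)$ for exactly one half-word $C$ of each sign. It makes your bifiltration exhaustive, but it produces no map out of $M$. Nothing in your sketch constructs a map $M\to M(C^{-1}D)$.

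The standard proof does not split summands off one at a time. Instead:
\begin{enumerate}
\item Fix one position $(C,D)$ for each word.
\item Choose a complement $U_{C,D}$ of the denominator in $C^-(M)\cap D^-(M)$.
\item Lift it to $\theta_{C,D}\colon M(C^{-1}D)\otimes U_{C,D}\to M$.
\item Use the natural isomorphisms between the functors at the different positions of one word. They show that $F_{C',D'}(\theta_{C,D})$ is an isomorphism when $(C',D')$ is a position of $C^{-1}D$, and is zero otherwise.
\end{enumerate}
Then $\bigoplus\theta_{C,D}$ induces isomorphisms on all subquotients of the finite functorial filtration of every $M_v$, hence is an isomorphism. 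That is where your filtration claim actually does its work. The splitting of an individual $\phi$ is a consequence, not an input. To get a retraction directly, you would have to run the dual construction on $\operatorname{Hom}_k(M,k)$ over $\Lambda^{\mathrm{op}}$. You would start from a functional that vanishes on $C^+(M)+D^+(M)$ and takes the value $1$ at $u$, and then invoke locality of $\operatorname{End}M(C^{-1}D)$.

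For bands, the missing ingredient is more specific than a Jordan decomposition. $F_B(M)$ is only a subquotient of $M_v$. Its automorphism is induced by the linear relation $R$ on $M_v$ given by going once around $B$. Decomposing that subquotient gives no homomorphism from a band module into $M$. What you need is the splitting lemma for linear relations: some complement $U$ of the denominator in $C^-(M)\cap D^-(M)$ has $R\cap(U\times U)$ equal to the graph of an automorphism $\varphi$ of $U$. This gives $M(B,U,\varphi)\to M$, which you then decompose by the Jordan form of $\varphi$.

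In deducing (3), note that reading a band backwards inverts the monodromy. With the usual conventions this gives $M(B,n,\lambda)\cong M(B^{-1},n,\lambda^{-1})$. So your invariant recovers $(B,\lambda)$ only up to $(B,\lambda)\sim(B^{-1},\lambda^{-1})$. Statement (3) holds as written only once a representative of each band class is fixed. The paper does this implicitly when it says $M(B,n,\lambda)$ depends only on the class of $B$.

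Two smaller points:
\begin{itemize}
\item The nilpotence of the kernel in your indecomposability step is easiest to prove after the filtration is in place. An endomorphism killed by every $F_{C,D}$ strictly lowers a finite filtration, so it is nilpotent.
\item Completeness of the list comes from the decomposition itself, not from Krull--Schmidt.
\end{itemize}
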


\subsection{Fringed algebras and the bundle complex of a gentle algebra}\label{ssec:bloss}

It was shown independently in~\cite{BDMTY} and~\cite{PPP} that a gentle algebra $\L$ gives rise to a larger algebra called its fringed algebra $\tL$, such that the $\tau$-tilting theory of $\L$~\cite{AIR} is described by the simplicial complex of maximal strings on $\tL$.

If $v\in Q$, we say that $\textup{indeg}(v)$ is the number of arrows with head $v$ and we say that $\textup{outdeg}(v)$ is the number of arrows with tail $v$. If $\Lambda$ is gentle, then every vertex of $\Lambda$ has $\textup{indeg}(v)\leq2$ and $\textup{outdeg}(v)\leq2$.

\begin{defn}[{\cite[Definition 2.1]{PPP},\cite[Definition 3.2]{BDMTY}}]
	The \emph{fringed algebra} of a gentle algebra $\Lambda=\k Q/I$ is the gentle bound quiver $(\tilde Q,\tilde I)$ obtained by adding at each vertex $v\in Q_0$:
	\begin{itemize}
		\item\label{g1} arrows from new vertices in and out of $v$ such that its in-degree and out-degree both become $2$, and
		\item relations such that vertex $v$ fulfills the gentle bound quiver conditions.
	\end{itemize}
	Any arrow added through this construction is called a \emph{fringe arrow}, and is considered to be incident to $v$ as well as a new \emph{fringe vertex}. Vertices of $\tL$ coming from $\L$ are called \emph{internal vertices}. By construction, all {internal vertices} of $\tL$ have in-degree and out-degree 2, and all fringe vertices are incident to only one (fringe) arrow.
	We use the symbols $V$ and $E$ to refer to the vertices and arrows of $\tL$, respectively.
	The \textit{fringed algebra} of $\Lambda$ is $\tilde\Lambda=\k\tilde Q/\tilde I$.
\end{defn}

See Figure~\ref{fig:bloss} for an example of a gentle algebra and its fringed algebra.
When we draw fringed algebras, we draw fringe vertices as squares.

Abusing notation, we will use the term \emph{fringed algebra} to refer to the fringed algebra $\tL$ as well as its bound quiver $(\tilde Q,\tilde I)$.
We may refer to vertices or arrows of $\tilde Q$ as vertices or arrows of the fringed algebra $\tilde\Lambda$, respectively. Note that each vertex $v$ of $\tilde \Lambda$ corresponding to a vertex of $Q$ is incident to exactly four arrows (counting multiplicities, if it is incident to the same arrow at the head and tail). We call such vertices \emph{internal}. We say that the \emph{relations} of $\tL$ at an internal vertex $v$ are those compositions $\alpha_1\alpha_2$ and $\beta_1\beta_2$ such that $h(\alpha_1)=h(\beta_1)=t(\alpha_2)=t(\beta_2)=v$ and neither $\alpha_1\alpha_2$ nor $\beta_1\beta_2$ is a string of $\tL$. 

\begin{figure}
	\centering
	\def\svgscale{.21}
\begingroup%
  \makeatletter%
  \providecommand\color[2][]{%
    \errmessage{(Inkscape) Color is used for the text in Inkscape, but the package 'color.sty' is not loaded}%
    \renewcommand\color[2][]{}%
  }%
  \providecommand\transparent[1]{%
    \errmessage{(Inkscape) Transparency is used (non-zero) for the text in Inkscape, but the package 'transparent.sty' is not loaded}%
    \renewcommand\transparent[1]{}%
  }%
  \providecommand\rotatebox[2]{#2}%
  \newcommand*\fsize{\dimexpr\f@size pt\relax}%
  \newcommand*\lineheight[1]{\fontsize{\fsize}{#1\fsize}\selectfont}%
  \ifx\svgwidth\undefined%
    \setlength{\unitlength}{1141.06018066bp}%
    \ifx\svgscale\undefined%
      \relax%
    \else%
      \setlength{\unitlength}{\unitlength * \real{\svgscale}}%
    \fi%
  \else%
    \setlength{\unitlength}{\svgwidth}%
  \fi%
  \global\let\svgwidth\undefined%
  \global\let\svgscale\undefined%
  \makeatother%
  \begin{picture}(1,0.16803498)%
    \lineheight{1}%
    \setlength\tabcolsep{0pt}%
    \put(0,0){\includegraphics[width=\unitlength,page=1]{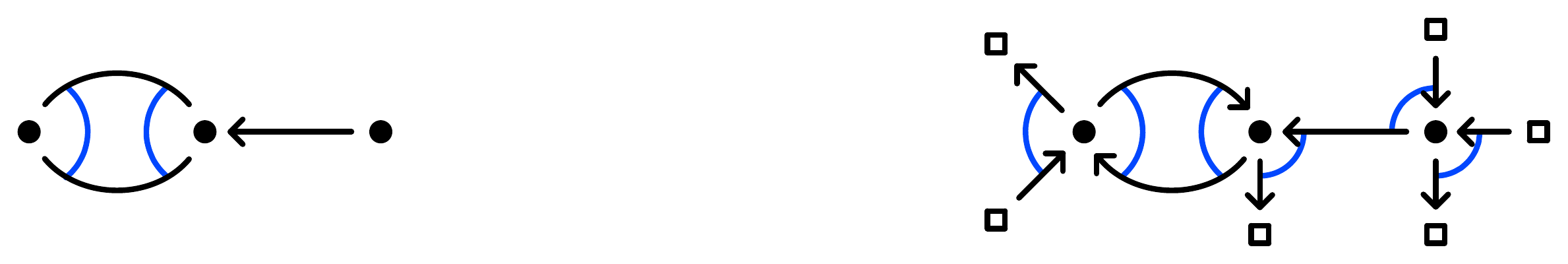}}%
    \put(0.73080615,0.05943422){\color[rgb]{0,0,0}\makebox(0,0)[lt]{\lineheight{1.25}\smash{\begin{tabular}[t]{l}$\scriptstyle{\beta}$\end{tabular}}}}%
    \put(0.72712019,0.13727586){\color[rgb]{0,0,0}\makebox(0,0)[lt]{\lineheight{1.25}\smash{\begin{tabular}[t]{l}$\scriptstyle{\gamma}$\end{tabular}}}}%
    \put(0.84160075,0.09682401){\color[rgb]{0,0,0}\makebox(0,0)[lt]{\lineheight{1.25}\smash{\begin{tabular}[t]{l}$\scriptstyle{\alpha}$\end{tabular}}}}%
    \put(0,0){\includegraphics[width=\unitlength,page=2]{genttoblos.pdf}}%
    \put(0.05774884,0.05944474){\color[rgb]{0,0,0}\makebox(0,0)[lt]{\lineheight{1.25}\smash{\begin{tabular}[t]{l}$\scriptstyle{\beta}$\end{tabular}}}}%
    \put(0.05406279,0.1372855){\color[rgb]{0,0,0}\makebox(0,0)[lt]{\lineheight{1.25}\smash{\begin{tabular}[t]{l}$\scriptstyle{\gamma}$\end{tabular}}}}%
    \put(0.16854335,0.09683453){\color[rgb]{0,0,0}\makebox(0,0)[lt]{\lineheight{1.25}\smash{\begin{tabular}[t]{l}$\scriptstyle{\alpha}$\end{tabular}}}}%
  \end{picture}%
\endgroup%

	\caption{{A gentle algebra (left) and its fringed algebra (right).}}
	\label{fig:bloss}
\end{figure}

\begin{defn}
	A \emph{route} of $\tL$ is a maximal string in $\tilde \Lambda$ (thus beginning and ending at fringe vertices of $\tilde \Lambda$).
	A route is \emph{straight} if it is an oriented walk in $\tilde \Lambda$, and \emph{bending} otherwise.
	A \emph{trail} of $\tL$ is a route or band of $\tL$.
\end{defn}

\begin{defn}\label{defn:tilt-complex}
	Let $\sigma$ be a string of $\tL$ from vertices $v$ to $w$. Let $\alpha\beta$ be a relation at $v$ and $\gamma\delta$ be a relation at $w$ so that $\alpha\sigma\gamma^{-1}$ and $\beta^{-1}\sigma\delta$ are strings. We say that $(\alpha\sigma\gamma^{-1},\beta^{-1}\sigma\delta)$ is a \emph{kiss}, or an \emph{incompatibility}.
	Two trails $p$ and $q$ are \emph{incompatible} if there exist substrings $p'$ of $p$ and $q'$ of $q$ such that $(p',q')$ is an incompatibility. Otherwise, $p$ and $q$ are \emph{compatible}.
\end{defn}

Shown in Figure~\ref{fig:kiss} are two incompatible routes -- the string $\alpha$ is at the top of the red route and at the bottom of the blue route.
\begin{figure}
	\centering
	\def\svgscale{.21}
\begingroup%
  \makeatletter%
  \providecommand\color[2][]{%
    \errmessage{(Inkscape) Color is used for the text in Inkscape, but the package 'color.sty' is not loaded}%
    \renewcommand\color[2][]{}%
  }%
  \providecommand\transparent[1]{%
    \errmessage{(Inkscape) Transparency is used (non-zero) for the text in Inkscape, but the package 'transparent.sty' is not loaded}%
    \renewcommand\transparent[1]{}%
  }%
  \providecommand\rotatebox[2]{#2}%
  \newcommand*\fsize{\dimexpr\f@size pt\relax}%
  \newcommand*\lineheight[1]{\fontsize{\fsize}{#1\fsize}\selectfont}%
  \ifx\svgwidth\undefined%
    \setlength{\unitlength}{426.14199829bp}%
    \ifx\svgscale\undefined%
      \relax%
    \else%
      \setlength{\unitlength}{\unitlength * \real{\svgscale}}%
    \fi%
  \else%
    \setlength{\unitlength}{\svgwidth}%
  \fi%
  \global\let\svgwidth\undefined%
  \global\let\svgscale\undefined%
  \makeatother%
  \begin{picture}(1,0.41174772)%
    \lineheight{1}%
    \setlength\tabcolsep{0pt}%
    \put(0,0){\includegraphics[width=\unitlength,page=1]{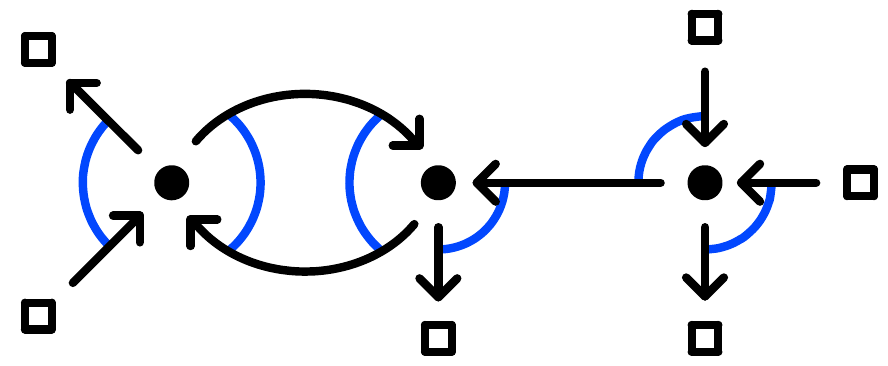}}%
    \put(0.29828508,0.14004721){\color[rgb]{0,0,0}\makebox(0,0)[lt]{\lineheight{1.25}\smash{\begin{tabular}[t]{l}$\scriptstyle{\beta}$\end{tabular}}}}%
    \put(0.28841537,0.34848008){\color[rgb]{0,0,0}\makebox(0,0)[lt]{\lineheight{1.25}\smash{\begin{tabular}[t]{l}$\scriptstyle{\gamma}$\end{tabular}}}}%
    \put(0.5949545,0.24016407){\color[rgb]{0,0,0}\makebox(0,0)[lt]{\lineheight{1.25}\smash{\begin{tabular}[t]{l}$\scriptstyle{\alpha}$\end{tabular}}}}%
    \put(0,0){\includegraphics[width=\unitlength,page=2]{kiss.pdf}}%
  \end{picture}%
\endgroup%

	\caption{Two incompatible routes.}
	\label{fig:kiss}
\end{figure}

A \emph{clique} is a set of pairwise compatible (self-compatible) routes.
The \emph{clique complex} of $\tL$ is the simplicial complex of cliques.
A \emph{bundle} is a set of pairwise compatible (self-compatible) trails. The \emph{bundle complex} of $\tL$ is the simplicial complex of bundles.

The clique complex was defined in~\cite{PPP,BDMTY} as the \emph{non-kissing complex}. We call it the clique complex to differentiate it from the bundle complex, introduced by the author in~\cite{BERG}.
See the right of Figure~\ref{fig:sqgent} for an example of a fringed algebra and its two maximal cliques.

\begin{figure}
	\centering
	\def\svgscale{.3}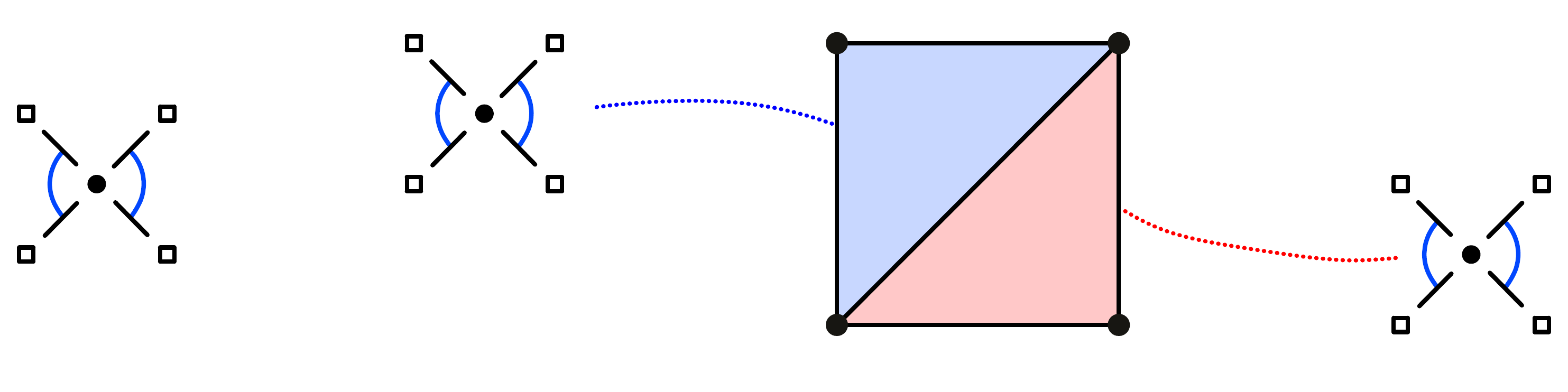
	\caption{A fringed algebra with its two maximal cliques and turbulence polyhedron on the right.}
	\label{fig:sqgent}
\end{figure}

\begin{remk}
	The fringed algebra $\tL$ of a gentle algebra $\L$ and its clique complex were developed in~\cite{PPP,BDMTY} to describe the $\tau$-tilting theory of $\L$, an important generalization of tilting theory introduced in~\cite{AIR}. Self-compatible routes of $\tL$ are in bijection with $\tau$-rigid indecomposable $\L$-modules and shifted projectives, and compatibility of routes describes $\tau$-rigidity of direct sums of the corresponding indecomposables.
\end{remk}

\subsection{Flows and turbulence polyhedra of fringed algebras}
\label{ssec:Bpres}

In~\cite{BERG}, the author introduced the notion of flow on the fringed algebra of a gentle algebra. The unit nonnegative flows make up the turbulence polyhedron, which is triangulated and subdivided by pairwise non-kissing routes and bands. We recall this theory in this subsection, and in the next we will see that it generalizes framing-triangulated flow polytopes of certain framed DAGs.

\begin{defn}\label{defn:flowgent}
	Let $\tL$ be a fringed algebra.
	A function $F:E\to\mathbb R$ is a \emph{flow} on $\tL$ (or on $\L$) if it satisfies
\textbf{conservation of flow:} i.e., if for any internal vertex $v$ of $\tL$ with relations $\alpha_1\alpha_2$ and $\beta_1\beta_2$ we have 
	\[F(\alpha_1)+F(\alpha_2)=F(\beta_1)+F(\beta_2).\]
	$F$ is \emph{nonnegative} if for any arrow $\alpha\in E$, we have $F(\alpha)\geq0$.
	The \emph{strength} of a flow $F$ is the sum $\frac{1}{2}\sum_{\alpha\in E\text{ fringe}}F(\alpha)$. A flow is \emph{unit} if it has strength 1, and it is a \emph{vortex} if it has strength 0.
	The \emph{turbulence polyhedron} $\F_1(\tL)$ is the space of unit nonnegative flows on $\tL$.
\end{defn}

The turbulence polyhedron $\F_1(\tL)$ is a rational polyhedron because it is defined as a finite intersection of rational hyperplanes and half-spaces in $\mathbb R^E$.
Refer to the middle of Figure~\ref{fig:flow} for an example of a flow on a fringed algebra.

\begin{defn}\label{defn:qw}
	Let $W$ be a set of arrows of $\tL$. Define the face $Q_W$ of $\F_1(\tL)$ as
	\[Q_W:=\{F\in\F_1(\tL)\ :\ F(\alpha)=0\text{ for all }\alpha\in W\}.\]
\end{defn}

\begin{lemma}[{\cite[Lemma 5.6]{BERG}}]
	\label{lem:gentface}
	$Q_W$ is a face of $\F_1(\tL)$, and all faces of $\F_1(\tL)$ are of the form $Q_W$.
\end{lemma}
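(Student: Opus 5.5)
The plan is to use the definition of a face from Section~\ref{ssec:subd}, together with the fact that $\F_1(\tL)$ is cut out by linear equalities and the inequalities $F(\alpha)\geq0$ for $\alpha\in E$.

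Explicitly, $\F_1(\tL)$ is the set of $F\in\mathbb R^E$ satisfying the conservation equalities at each internal vertex, the strength equality $\frac12\sum_{\alpha\text{ fringe}}F(\alpha)=1$, and the nonnegativity inequalities $-F(\alpha)\leq 0$. Rewrite each equality as a pair of opposite inequalities, so that we have a presentation $A\xx\leq b$. The only inequality rows that are not already forced to be tight on all of $\F_1(\tL)$ are the coordinate inequalities $-F(\alpha)\leq0$.

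For the first claim, observe that setting the rows indexed by $W$ to equality gives exactly $Q_W$. Hence $Q_W$ is a face by definition. Alternatively, one can use the linear functional $\phi_W(F)=\sum_{\alpha\in W}F(\alpha)$, which is nonnegative on $\F_1(\tL)$. If $Q_W$ is nonempty, then $\phi_W$ attains its minimum $0$ exactly on $Q_W$, which also exhibits $Q_W$ as a face. If $Q_W$ is empty, it is the empty face.

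For the converse, take an arbitrary face. By definition it is $\F_1(\tL)\cap\{A'\xx=b'\}$ for some subset of rows of the inequality system. Rows coming from the equalities (conservation and strength) impose nothing new on $\F_1(\tL)$. Each remaining row is some $F(\alpha)\geq0$, and making it tight imposes $F(\alpha)=0$. Letting $W$ be the set of arrows whose nonnegativity rows were selected, the face equals $Q_W$.

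The only subtle step is bookkeeping about the presentation. The paper's definition of polyhedron is written as $A\xx\leq 0$, while $\F_1(\tL)$ is naturally described inhomogeneously. So I would either homogenize by adjoining a coordinate, or note that the standard notion of face, as in~\cite{Schrijver}, applies to $A\xx\leq b$. In either formulation, the faces are obtained by tightening a subset of rows, and the only non-equality rows are the coordinate nonnegativity conditions. This identification is the step I would write out carefully. The rest is immediate.
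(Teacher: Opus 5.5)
Your proof is correct. Since $\F_1(\tL)$ is cut out by equalities (conservation and strength) together with the coordinate inequalities $F(\alpha)\geq0$, tightening a subsystem of rows gives exactly the sets $Q_W$, and your functional $\phi_W$ gives a presentation-independent check of the first claim. The paper does not reprove this lemma, since it quotes it from \cite{BERG}; your argument is the standard one it relies on, and you are right to read the background definition of face as tightening rows of a system $A\xx\leq b$.
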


\begin{defn}
	If $p$ is a string of $\tL$, then the \emph{indicator vector} $\I(p)$ is the vector in $\mathbb R^E$ such that the coordinate of an arrow $\alpha$ is the number of times the arrow $\alpha^{\pm1}$ appears in $p$.
	The \emph{indicator vector} of a route or band of $\tL$ is the indicator vector of its underlying string.
\end{defn}

We will use indicator vectors of certain routes and bands to obtain a minimal presentation for the turbulence polyhedron of a fringed algebra.

\begin{defn}\label{defn:elrouteg}
	Let $\tL$ be a fringed algebra.
	We say that a route $p$ of $\tL$ is \emph{simple} if it does not use the same vertex twice. 
	We say that $p$ is a \emph{lollipop} if it is of the form $s\sigma s^{-1}$ for some strings $s$ and $\sigma$ such that every vertex of $s\sigma s^{-1}$ appears exactly twice if it is a vertex of $s$, and once otherwise.
	A self-compatible route is \emph{elementary} if it is simple or it is a lollipop.
\end{defn}

\begin{defn}\label{defn:elbandg}
	Let $\tL$ be a fringed algebra.
	We say that a band $B$ of $\tL$ is \emph{simple} if it does not use the same vertex twice (not counting $t(B)=h(B)$). 
	We say that $B$ is a \emph{barbell} if up to cyclic equivalence it is of the form $s\sigma_1 s^{-1}\sigma_2$ for some strings $s$, $\sigma_1$, and $\sigma_2$ such that every vertex used appears exactly twice if it is a vertex of $s$, and once otherwise.
	A band is \emph{elementary} if it is simple or it is a barbell.
\end{defn}

\begin{remk}
	In~\cite{BERG}, elementary bands were required to be self-compatible, but we drop that requirement here. This is because we will extend this to a notion of elementary trails on turbulence charts, and we do not want the notion of an elementary trail to depend on the choice of framing.
	Note that for each self-compatible barbell $s\sigma_1 s^{-1}\sigma_2$, there is a non-self-compatible barbell $s\sigma_1 s^{-1}\sigma_2^{-1}$.
\end{remk}

See Figure~\ref{fig:ELEMENTARY} for a pictorial summary of elementary and nonelementary trails. The following result shows that elementary routes and bands of a fringed algebra give the vertices and unbounded directions of its turbulence polyhedron.

\begin{thm}[{\cite[Theorem A]{BERG}}]\label{thm:BPres}
	The map $p\mapsto\I(p)$ bijects elementary routes to vertices of $\F_1(\tL)$. The map $B\mapsto\I(B)$ bijects self-compatible elementary bands to the elementary rays of $\F_1(\tL)$.
\end{thm}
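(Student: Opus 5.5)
The plan is to characterize vertices and extremal rays of $\F_1(\tL)$ through supports: a point $F\in\F_1(\tL)$ is a vertex iff it is the unique point of the face $Q_W$, where $W$ is the set of arrows on which $F$ vanishes (using Lemma~\ref{lem:gentface}). Likewise, a ray of the recession cone $\F_0:=\{$nonnegative vortices$\}$ is extremal iff its support is minimal among supports of nonzero nonnegative vortices.

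The first step is to show that every integral nonnegative flow decomposes as a sum of indicator vectors of routes and bands. Starting from an edge carrying positive flow, I would walk along arrows with positive flow. Conservation of flow at each internal vertex, with the two relations giving two ``sides'', guarantees that whenever the walk enters through one side it can leave through the other while remaining a string. Continuing in both directions produces either a maximal string ending at fringe vertices or a closed walk, which after cutting yields a band. I then subtract its indicator vector and induct. Since $\F_1(\tL)$ is rational, its vertices are rational, and scaling shows each vertex is a convex combination of indicator vectors of routes plus bands. Extremality then forces a vertex to equal $\I(p)$ for a single route $p$, with no band contribution because bands are vortices. Similarly, each extremal ray is spanned by some $\I(B)$.

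The second step is a reduction to elementary trails. If a route $p$ revisits a vertex in a way that is not of lollipop form, I would split it at a repeated vertex into two routes, or a route plus a band, whose indicator vectors average to $\I(p)$. This shows $\I(p)$ is not a vertex. The same splitting handles non-simple, non-barbell bands, and also non-self-compatible barbells, whose indicator vector equals that of the compatible barbell obtained by reversing $\sigma_2$. Conversely, for an elementary route $p$ I must show that $\I(p)$ is the only unit flow supported on $\operatorname{supp}\I(p)$. On the support subgraph, conservation at each vertex visited once forces equal flow through consecutive arrows. At the doubled vertices of a lollipop's stick, the two passes pair up the four arrows so that conservation again propagates. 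The unit condition then fixes the scale. The analogous support-minimality argument applies to simple bands and barbells.

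Injectivity is immediate for routes, since an elementary route is recoverable from its support together with the multiplicities (1 or 2 on the stick). For bands I would check that distinct self-compatible elementary bands have distinct indicator vectors. The main obstacle is the case analysis of how a walk may revisit vertices in a gentle quiver, namely showing that anything other than simple or lollipop/barbell shapes genuinely splits into two strings. I would first try to organize this by the first repeated vertex and the two relation sides it uses.
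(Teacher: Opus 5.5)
The paper does not prove this statement. It is imported from \cite[Theorem A]{BERG}, so your proposal has to stand on its own.

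\textbf{What works.} Your framework is sound. Vertices of $\F_1(\tL)$ are exactly the points that are alone in their support face $Q_W$. Extremal rays of the vortex cone are exactly the minimal-support vortices. Integral nonnegative flows decompose into indicator vectors of routes and (powers of) bands. For that decomposition, though, you should walk on $F(\alpha)$ parallel copies of each arrow, pairing copies across the two sides of each vertex, and close up at a repeated oriented edge rather than a repeated vertex. A walk guided only by the support can run twice along an arrow of flow $1$, for instance returning down a stick of flow $1$.

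\textbf{The main gap.} The real gap is the step you call the main obstacle, and the splitting you describe does not work as stated. If two passes through a vertex $v$ go in the same direction, you do get a route plus a nonzero closed string. If they go in opposite directions, write $p=p_1p_2p_3$. The routes $p_1p_2p_1^{-1}$ and $p_3^{-1}p_2p_3$ average to $\I(p)$, but both equal $\I(p)$ as soon as $\I(p_1)=\I(p_3)$. This happens for non-elementary routes too. Take $p=x\rho y\,\tau\,y^{-1}\rho x^{-1}$ split at $h(y)$, where $\rho$ and $\tau$ are closed strings at $h(x)$ and $h(y)$ that leave and return on the side opposite to the last edge of $x$ (resp.\ $y$). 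So you must show that a good split exists. One way:
\begin{enumerate}
\item After ruling out same-direction pairs, every vertex is passed at most twice, and in opposite directions.
\item Then $\I(p_1)=\I(p_3)$ forces $p_1$ to be simple and $p_3=p_1^{-1}$.
\item Taking $p_1$ as long as possible gives the lollipop form.
\end{enumerate}

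\textbf{The band reduction.} For bands your claim is false as stated. Take twin barbells $B_1=s\sigma_1s^{-1}\sigma_2$ and $B_2=s\sigma_1s^{-1}\sigma_2^{-1}$. The product $B_1B_2$ is a non-elementary band, yet $\I(B_1B_2)=2\I(B_1)$ lies on an extremal ray. The correct reduction is to take a shortest band on a given extremal ray. The pieces of any split are vortices summing to a point of the ray, so they lie on it. Then run the opposite-direction analysis as for routes; reversing a lobe turns interleaved repeated vertices into same-direction ones.

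\textbf{Self-compatibility.} The framing never enters your argument, yet the statement is about self-compatible trails. First, you need that simple routes, lollipops and simple bands are automatically self-compatible. The only substrings occurring twice are lazy strings at doubled vertices, where the two passes share a half-edge, and substrings of the stick, which diverge at no more than one end. Without this, surjectivity onto vertices is incomplete.

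Second, the band bijection rests on a twin lemma. For a barbell with $u=t(s)$ and $w=h(s)$, the two occurrences of $s$ in $B^{\pm1}$ leave $w$ through the two half-edges of $\sigma_1$ and enter $u$ through the two half-edges of $\sigma_2$. Hence exactly one of $s\sigma_1s^{-1}\sigma_2^{\pm1}$ contains a kiss, since reversing $\sigma_2$ swaps the half-edges at $u$; no other repeated substring can kiss. This fact gives both surjectivity onto extremal rays and injectivity on self-compatible elementary bands. You presuppose it (``the compatible barbell obtained by reversing $\sigma_2$'', ``I would check'') rather than prove it.
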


\subsection{Subdivisions of turbulence polyhedra of fringed algebras}
\label{ssec:Bsubd}

We now recall subdivision results on turbulence polyhedra of fringed algebras.

\begin{defn}
	Let $\bK=\K\cup \B$ be a bundle. A \emph{$\bK$-bundle combination} (of $\tL$) is a linear combination
	\[F=\sum_{p\in \bK}a_p\I(p)\]
	of indicator vectors of trails, such that each $a_p$ is nonnegative. The \emph{strength} of the bundle combination is $\sum_{p\in \K}a_p$ (note we iterate only over routes $\K$, not all trails $\bK$). Note that this is the strength of the resulting flow $F$. The bundle combination is \emph{unit} if its strength is 1. In other words, a unit $\bK$-bundle combination is a convex combination of indicator vectors of routes in ${\K}$ plus a nonnegative combination of indicator vectors of bands in $\B$.
	It is a \emph{vortex} if its strength is 0 -- i.e., if it is a nonnegative combination of indicator vectors of bands in $\B$, and no routes.
	A bundle combination is a \emph{clique combination} if $a_p=0$ for every $B\in \B$. It is \emph{positive} if $a_p$ is positive for every $p\in \bK$.
\end{defn}

If $\bK$ is a bundle, then the \emph{bundle cone} $\D_{\geq0}(\bK)$ is the polyhedron of nonnegative $\bK$-bundle combinations and the \emph{bundle simplihedron} $\D_{1}(\bK)$ is the polyhedron of unit $\bK$-bundle simplihedra. The maximal bundle simplihedra are precisely the bundle simplihedra of maximal bundles.
When $\bK$ is a clique, $\D_1(\bK)$ is a simplex and we call it a \emph{clique simplex}.
If $\bK$ is a bundle which is not a clique, then we call $\D_1(\bK)$ its \emph{(unit) bundle wall}.
Every maximal clique simplex is full-dimensional in $\F_1(\tL)$, and no maximal bundle wall is full-dimensional in $\F_1(\tL)$.

\begin{thm}[{\cite[Theorem B]{BERG}}]\label{thm:BClique}
	Any nonnegative flow has at most one representation as a positive clique combination.
	A dense subset of $\F_{\geq0}(\tL)$ may be obtained as a clique combination. If $F\in\F_{\geq0}(\tL)$ is an integer flow, then its clique combination has integral coefficients.
	Consequently, the \emph{clique triangulations}
	\begin{align*}
		\mathcal T_{\geq0}(\tL):&=\{\D_{\geq0}(\K)\ :\ \K\text{ is a maximal clique of }\tL\}\text{ and }\\
		\mathcal T_1(\tL):&=\{\D_1(\K)\ :\ \K\text{ is a maximal clique of }\tL\}
	\end{align*}
	are unimodular triangulations of $\F_{\geq0}(\tL)$ and $\F_1(\tL)$, respectively.
\end{thm}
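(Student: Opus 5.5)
The plan is to prove everything through one local construction: \emph{strand matching} at each internal vertex, which for integer flows recovers a canonical non-kissing decomposition.

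First fix an integer flow $F\in\F_{\geq0}(\tL)$. Let $v$ be an internal vertex with relations $\alpha_1\alpha_2$ and $\beta_1\beta_2$. Then $v$ has two ``sides'' $\{\alpha_1,\alpha_2\}$ and $\{\beta_1,\beta_2\}$. A string through $v$ must cross from one side to the other: it uses $\alpha_1\beta_2$, $\beta_1\alpha_2$, $\alpha_1\beta_1^{-1}$ or $\alpha_2^{-1}\beta_2$, and never stays on one side. Replace each arrow $\gamma$ by $F(\gamma)$ parallel unit strands. Linearly order the strands on each side of $v$, placing the head half-edge ($\alpha_1$ resp.\ $\beta_1$) above the tail half-edge ($\alpha_2$ resp.\ $\beta_2$). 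Conservation of flow says both sides carry the same number of strands, so we can match them top-to-bottom. The point to check is that this order-preserving matching is the \emph{unique} one with no kiss localized at $v$ (in the sense of Definition~\ref{defn:tilt-complex}), and that the orders of strands along an arrow are read consistently from its two endpoints.

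Gluing these matchings produces a multiset of walks, each of which is a string. Those reaching fringe vertices are routes, and the closed ones are (powers of) bands. Because every local matching is non-kissing, the resulting trails are pairwise compatible, so $F$ is a nonnegative integer bundle combination with strength equal to half the fringe total. This gives integrality once we know uniqueness.

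For uniqueness, suppose $F=\sum_{p\in\K}a_p\I(p)$ is a positive clique combination with real coefficients. Replace integer strands by weighted strands: the arrow $\gamma$ carries the interval $[0,F(\gamma)]$, subdivided into subintervals of lengths $a_p$ (with multiplicity) for the routes $p$ through $\gamma$. Compatibility of $\K$ forces these subintervals to be stacked at each vertex in exactly the order-preserving way described above. Otherwise two routes of $\K$ would meet at $v$ in the crossing pattern of a kiss, and one follows the common substring to its ends to produce an honest incompatibility. Hence the continuous matching is determined by $F$ alone. Following a point of $[0,F(\gamma)]$ through the matchings recovers the route containing it, so the routes and their weights are recovered from $F$. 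For integer $F$ this identifies any clique combination with the strand decomposition, so its coefficients are integers.

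For density, I would show that the set of flows whose decomposition involves a band lies in a finite union of lower-dimensional sets. Each such set is a bundle cone $\D_{\geq0}(\bK)$ with $\bK$ containing a band, and I would show it is not full-dimensional using the fact that a maximal bundle wall is not full-dimensional. The rational flows outside this union are multiples of integer flows whose strand decomposition has no closed strands, so they are clique combinations; since the union is nowhere dense, this complement is dense. I expect this density step, together with making the real-weight uniqueness argument rigorous (in particular the passage from a local crossing at $v$ to a global kiss), to be the main obstacle.

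Finally, the triangulation statements follow from the above together with two standard facts:
\begin{itemize}
\item Uniqueness of positive clique combinations gives the Strong Intersection Property: two clique cones intersect in the cone on the common subclique, which is a face of both by Proposition~\ref{prop:simplicial}.
\item Integrality gives unimodularity: every lattice point of $\D_{\geq0}(\K)$ is an integer combination of the $\I(p)$, so these vectors form a lattice basis of the span.
\end{itemize}
Restricting to the unit slice yields $\mathcal T_1(\tL)$.
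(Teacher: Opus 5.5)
The paper gives no proof of this statement (it is quoted from \cite{BERG}), so your proposal has to stand on its own. Its strand-decomposition, uniqueness and integrality parts are the standard ``non-crossing strands in the ribbon surface'' argument and are fine in outline. The density step, however, has a genuine gap.

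You assume that the bundle cones containing a band form a \textit{finite} union. This is false in general. Take $Q\colon 1\rightrightarrows 2\rightrightarrows 3$ with arrows $a,b\colon 1\to 2$, $c,d\colon 2\to 3$ and $I=(ac,bd)$. Then the bands $(cd^{-1})^n a^{-1}b$, $n\geq 1$, are pairwise inequivalent, and a direct check with Definition~\ref{defn:tilt-complex} shows that none of them kisses itself. So there are infinitely many distinct bundle cones containing a band. With countably many walls your argument no longer goes through. Their union is meager but need not be closed, and its complement need not contain any rational point: countably many rational subspaces can contain every rational point of an open set. Meanwhile your strand decomposition only certifies clique combinations at rational flows. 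What is missing is an approximation statement. For instance, you could show that every bundle cone containing a band lies in the closure of the union of the clique cones; compare Example~\ref{ex:kron1}, where the cones of $\{p_e,p_f,r_a,r_{a+1}\}$ accumulate on the wall of $\{p_e,p_f,\te_2\tf_2^{-1}\}$. Combined with your fact that every rational flow lies in some bundle cone, this would give density.

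Three further points need repair.

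First, your local rule is reversed. With the head half-edge above the tail half-edge on both sides, the order-preserving matching pairs $\alpha_1$-strands with $\beta_1$-strands and $\alpha_2$-strands with $\beta_2$-strands. Whenever all four values are positive, this produces exactly the lazy kiss $(\alpha_1\beta_1^{-1},\alpha_2^{-1}\beta_2)$. The non-kissing choice is the order-reversing matching, i.e.\ the nested matching in a disc with boundary order $\alpha_2,\alpha_1,\beta_2,\beta_1$. Reading every disc in the same rotational sense, the order of strands on an arrow must be reversed at its other endpoint.

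Second, global compatibility does not follow from local non-kissing alone. It follows from the fact that parallel strands keep their relative order along a common substring, so that entering above forces leaving below. The same comparison, applied to two passages followed until they diverge, is what orders the passages of a clique on each arrow in your uniqueness step.

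Third, the ``consequently'' needs every maximal clique cone to have dimension $\dim\F_{\geq0}(\tL)$, as Definition~\ref{defn:subd} requires for a triangulation. You do not address this. With infinitely many cliques it is not a formal consequence of density, uniqueness and integrality: a lower-dimensional maximal cone could sit where full-dimensional cones accumulate, as the band wall does in Example~\ref{ex:kron1}. One needs, for example, that every maximal clique consists of all straight routes together with $|Q_0|$ bending routes (from $\tau$-tilting theory). Its size is then $|Q_0|+\tfrac12\#\{\text{fringe vertices}\}=|E|-|Q_0|=\dim\F_{\geq0}(\tL)$.
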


The unimodular triangulation of Theorem~\ref{thm:BClique} is complete when $\tL$ is representation-finite, but is never complete when $\tL$ is representation-infinite. In~\cite{BERG}, we gave two larger subdivisions of $\F_1(\tL)$ given by adding lower-dimensional walls to the clique triangulation. First, we add the maximal bundle walls.

\begin{thm}[{\cite[Theorem C]{BERG}}]\label{thm:BBundle}
	Any nonnegative flow has at most one representation as a positive bundle combination.
	A dense subset of $\F_{\geq0}(\tL)$ including all rational flows may be obtained as a bundle combination. If $F\in\F_{\geq0}(\tL)$ is an integer flow, then its bundle combination has integral coefficients.
	Consequently, the \emph{bundle subdivisions}
	\begin{align*}
		\mathcal S_{\geq0}(\tL):&=\{\D_{\geq0}(\bK)\ :\ \bK\text{ is a maximal bundle of }\tL\} \text{ and}\\
		\mathcal S_{1}(\tL):&=\{\D_1(\bK)\ :\ \bK\text{ is a maximal bundle of }\tL\}
	\end{align*}
	formed by adding the bundle walls to the clique triangulation are simplicial subdivisions of $\F_{\geq0}(\tL)$ and $\F_1(\tL)$, respectively, which cover every rational point.
\end{thm}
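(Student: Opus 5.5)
The plan is to prove everything through a single construction: a canonical "strand decomposition" of an integer nonnegative flow. Given $F\in\F_{\geq0}(\tL)$ with integer values, I would replace each arrow $\alpha$ by $F(\alpha)$ parallel unit strands. At each internal vertex $v$ with relations $\alpha_1\alpha_2$ and $\beta_1\beta_2$, the non-relation compositions are $\alpha_1\beta_2$ and $\beta_1\alpha_2$, together with the inverse turns $\alpha_1\beta_1^{-1}$ and $\alpha_2^{-1}\beta_2$. I would read the strands arriving at $v$ on the four arrows in the cyclic (framing) order around $v$. Conservation $F(\alpha_1)+F(\alpha_2)=F(\beta_1)+F(\beta_2)$ says exactly that there are as many strand-ends on the $\alpha$-side as on the $\beta$-side. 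Each $\alpha$-side end must be glued to a $\beta$-side end, because only those gluings are strings. I would take the unique \emph{order-preserving}, non-crossing gluing of the $\alpha$-side ends to the $\beta$-side ends. Following strands globally then gives a finite multiset of maximal strings (routes, ending at fringe vertices) and closed walks; each closed walk is a power of a band. The coefficient $a_p$ is the number of strands tracing $p$, or the number of periods in the case of a band.

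The first key step is to check that any two of the resulting trails are compatible (non-kissing). A kiss $(\alpha\sigma\gamma^{-1},\beta^{-1}\sigma\delta)$ would require two strands running parallel along $\sigma$ whose relative order is reversed at the two ends. Order-preservation of the gluing at every vertex along $\sigma$ rules this out by a straightforward induction along $\sigma$. The same argument shows each trail is self-compatible. This yields existence of a bundle combination for every integer flow, and the coefficients are integral by construction. For rational flows I would scale by a common denominator, decompose, and divide; the decomposition is homogeneous under scaling, so the result is consistent. Density then follows because rational flows are dense in the rational polyhedral cone $\F_{\geq0}(\tL)$.

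The second step is uniqueness. Suppose $F=\sum_{p\in\bK}a_p\I(p)$ is a positive bundle combination. For the integral or rational case I would argue that non-kissingness of $\bK$ forces a consistent linear order of the strands of $\bK$ on each arrow (with multiplicity $a_p$), and that this order must be the order-preserving gluing at each vertex. Hence the combination coincides with the canonical decomposition of $F$. For real coefficients I would instead note two facts. First, for a fixed bundle the map $(a_p)\mapsto F$ is injective, since the canonical decomposition recovers it on a dense set and the map is linear. Second, two different bundles $\bK_1,\bK_2$ with positive combinations giving the same $F$ can be perturbed within their relative interiors to a common rational point. Uniqueness there then forces $\bK_1=\bK_2$.

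Finally I would assemble the subdivision statements. By uniqueness and Proposition~\ref{prop:simplicial}, each $\D_1(\bK)$ and $\D_{\geq0}(\bK)$ is a simplihedron (resp.\ simplicial cone). Uniqueness of positive representations also gives $\D(\bK_1)\cap\D(\bK_2)=\D(\bK_1\cap\bK_2)$, which is a common face; this is the strong intersection property. Density and coverage of rational points come from the existence step. I expect the main obstacle to be the uniqueness step in full generality, specifically for bands and real (irrational) coefficients, where strands are infinite periodic and the ordering argument must be made precise. I would first try to reduce it to the clique case of Theorem~\ref{thm:BClique}, treating band contributions via their periodic strand orders, before resorting to the perturbation argument.
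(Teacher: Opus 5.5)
The paper does not prove this statement. It quotes it from \cite[Theorem C]{BERG} and uses it as a black box, and its own argument runs the other way: it deduces the general Theorem~\ref{thm:bundle} from this result through gentle envelopes (restrict to the face $Q_W$, then contract). Your proposal is therefore an independent, direct proof in the spirit of the routing argument of \cite{DKK}, and in outline it is sound.

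Your passage from rational to real coefficients is correct, with one justification to add to the injectivity step. The indicator vectors are integer vectors, so a nonzero kernel of $(a_p)\mapsto\sum a_p\I(p)$ would contain a rational vector and produce two distinct positive rational combinations. The perturbation step is also right: $\mathrm{relint}\,\D_{\geq0}(\bK_1)\cap\mathrm{relint}\,\D_{\geq0}(\bK_2)$ is a nonempty, relatively open subset of the rational subspace $\mathrm{span}\,\D_{\geq0}(\bK_1)\cap\mathrm{span}\,\D_{\geq0}(\bK_2)$. It therefore contains a rational flow, whose coordinates in either independent basis are rational.

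A notable feature of your route is that it never uses fullness, steepness, or the absence of steep bands. Read with arbitrary equivalence classes and framing orders, it proves Theorem~\ref{thm:bundle} directly for every framed turbulence chart. For $\mathcal S_1$ you must then keep the restriction to bundles containing a route, which is automatic in the gentle case because straight routes are exceptional. It also gives an explicit algorithm for the bundle combination of an integer flow, and makes gentle envelopes unnecessary for the subdivision result. What the paper's route buys instead is the structural Corollary~\ref{cor:face-of-gentle} and, with the same machinery, the presentation Theorem~\ref{thm:pres-general}, which strand routing alone does not give.

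Three points need to be pinned down when you write it out.

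First, conventions. Relative to the framing's linear orders, the gluing at $v$ must be order-reversing: the lowest strand-end in one class goes to the highest in the other. The order of strand-units on an arrow must also be reversed between its two endpoints. If you use the same order at both ends, the induction along $\sigma$ produces exactly the kisses you want to exclude. For self-compatibility, note that a self-kiss lifts to two distinct strand-level occurrences. The gluing is deterministic, so two occurrences sharing a strand-unit on $\sigma$ would also share their ends.

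Second, uniqueness for integer coefficients carries the real weight, and you only assert it. Given a positive integral bundle combination, you must build a total order on the strand-units of each arrow. One way is to compare lexicographically: first by forward continuation, then by backward continuation, then by copy index. You then need to check three things:
\begin{enumerate}
\item the order is total;
\item non-kissing makes the forward and backward comparisons agree whenever both are defined, which is exactly what forces the order-reversing gluing at every vertex;
\item two strand-units with identical forward and backward continuations are copies of the same trail at the same position, using that bands are primitive and that no trail equals its own inverse.
\end{enumerate}

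Third, linear independence bounds $|\bK|\le|E|$. You need this so that every bundle lies in a maximal one when you cover rational points.
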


\subsection{Gentle framed directed graphs and subdivided flow polyhedra}
\label{ssec:gentlyframeddag}

Recall Section~\ref{ssec:fdag}, where framed DAGs gave rise to regular unimodular triangulations on the associated flow polytopes. The theory of turbulence polyhedra of gentle algebras allows one to extend these results to a certain class of framed directed graphs which may have oriented cycles.

We first isolate the class of fringed algebras whose turbulence polyhedra have been shown to be flow polyhedra.

\begin{defn}
	A gentle algebra $\Lambda=\k Q/I$ is \textit{paired} if there is a map $\psi$ from the arrows of $Q$ to $\{1,2\}$ such that 
		 if $\alpha,\beta\in Q_1$ with $h(\alpha)=t(\beta)$ then $\alpha\beta\in I\iff \psi(\alpha)\neq\psi(\beta)$.
	We say that $\psi$ is a \emph{pairing function} of $\Lambda$.
\end{defn}

It is immediate that $\Lambda$ is paired if and only if $\tL$ is paired.

We will connect paired fringed algebras with a special class of framed directed graphs which we define now.
Recall the definition of a framed directed graph from Definition~\ref{defn:framingdag}.

\begin{defn}\label{defn:amply-framed-dg}
	A directed graph is \emph{full} if every internal vertex is incident to precisely two outgoing edges and two incoming edges.
	A \emph{gentle framed directed graph} $(G,\mathfrak{F})$ is a framed directed graph $G$ such that
	\begin{enumerate}
		\item the directed graph $G$ is full,
		\item $G$ has no edges directly from source to sink,
		\item there is a (necessarily unique) map $\psi_\mathfrak{F}:E\to\{1,2\}$ such that, if ${e}$ and ${f}$ are edges with the same internal source or target, we have ${e}<_{\mathfrak{F}}{f}$ (in the partial order of $\inn(h({e}))$ or $\out(t({e}))$) if and only if $\psi_\mathfrak{F}({e})<\psi_\mathfrak{F}({f})$, and
		\item\label{dgh3} if $e_1e_2\dots e_m$ is an oriented cycle of $G$, then the image of $\{e_1,\dots,e_m\}$ through $\psi_\mathfrak{F}$ is $\{1,2\}$.
	\end{enumerate}
\end{defn}

If $\psi_{\mathfrak{F}}(e)=1$, we call $e$ a \emph{1-edge}; otherwise, it is a \emph{2-edge}.
We now give a map from (convenient) gentle framed directed graphs to paired gentle algebras.

\begin{defn}\label{defn:gent-from-dg}
	Let $\Gamma=(G,\mathfrak{F})$ be a convenient gentle framed directed graph. We associate to $\Gamma$ a fringed algebra $\tL_\Gamma$ as follows.
	The vertices of $\tL_\Gamma$ are in bijection with vertices of $\Gamma$. For each edge $e$ of $\Gamma$ with $\psi_{\mathfrak{F}}(e)=1$, there is an arrow $\tilde{e}:t({e})\to h({e})$ of $\tL_\Gamma$. For each edge ${f}$ of $\Gamma$, there is an arrow $\tilde{f}:h({f})\to t({f})$ of $\tL_\Gamma$. The relations of $I$ are all pairs of the form $\tilde{e}\tilde{f}$, where either ${e}$ is a 1-edge and ${f}$ is a 2-edge, or ${e}$ is a 2-edge and ${f}$ is a 1-edge. Then $\tL_\Gamma$ is paired, realized by the pairing function $\psi_{\tL_\Gamma}$ from arrows of $\tL$ to $\{1,2\}$ defined by $\psi_{\tL_\Gamma}(\tilde{e})=\psi_{\mathfrak{F}}({e})$.
\end{defn}

Both framed directed graphs on the right of Figure~\ref{fig:flow} map to the fringed algebra in the middle of Figure~\ref{fig:flow} through the map of Definition~\ref{defn:gent-from-dg}.

It is easy to check that the flow polyhedron of $\Gamma$ is equal to the turbulence polyhedron of $\tL_\Gamma$.
Note that a route (resp. band) of $\Gamma$ naturally gives rise to a route (resp. band) of $\tL_\Gamma$. This gives a bijection between trails of $\Gamma$ (up to equivalence) and trails of $\tL_\Gamma$ (up to equivalence). Moreover, this correspondence respects compatibility, showing that the bundle complex of $\Gamma$ is equal to the bundle complex of $\tL_\Gamma$. This is covered by the following result.

\begin{prop}[{\cite[Proposition 4.29]{BERG}}] 
	\label{prop:gentframe}
	The map of Definition~\ref{defn:gent-from-dg} is a bijection from convenient gentle framed directed graphs to paired fringed algebras with a choice of pairing function. Moreover, $\F_1(\Gamma)=\F_1(\tL_\Gamma)$ and the natural correspondence of trails of $\Gamma$ and $\tL_\Gamma$ realizes an isomorphism between the bundle complex of $\Gamma$ and the bundle complex of $\tL_\Gamma$.
\end{prop}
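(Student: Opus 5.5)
The statement to prove is Proposition~\ref{prop:gentframe}. It has three parts: the map of Definition~\ref{defn:gent-from-dg} is a bijection, the two flow polyhedra coincide, and trails correspond in a way that preserves compatibility. I will treat them in that order, since each part builds on the description of $\tL_\Gamma$ obtained in the previous one.

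\textbf{Bijection.} First I check that $\tL_\Gamma$ is a paired fringed algebra.
\begin{itemize}
\item Fullness of $G$ means each internal vertex has two incoming and two outgoing edges. By condition (3), at each vertex one incoming edge is a $1$-edge and the other a $2$-edge, and likewise for the outgoing edges.
\item A $1$-edge $e$ gives an arrow $\te\colon t(e)\to h(e)$, and a $2$-edge $f$ gives the reversed arrow $\tf\colon h(f)\to t(f)$. So at an internal vertex $v$ the arrows of $\tL_\Gamma$ ending at $v$ are the incoming $1$-edge and the outgoing $2$-edge. The arrows starting at $v$ are the outgoing $1$-edge and the incoming $2$-edge. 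Hence in- and out-degree are both $2$.
\item The relations are the compositions mixing a $1$-arrow with a $2$-arrow. So at $v$ there are exactly two relations, the gentle conditions hold, and $\psi$ is a pairing function.
\item Condition (2) (no source-to-sink edges) ensures that every fringe arrow is incident to exactly one internal vertex. Convenience ensures that fringe vertices have degree one, so $\tL_\Gamma$ is a fringed algebra.
\item Finite-dimensionality, i.e.\ admissibility of the ideal, needs every oriented cycle of the quiver to contain a relation. An oriented cycle of $\tL_\Gamma$ using only $1$-arrows, or only $2$-arrows, is an oriented cycle of $G$ (traversed backwards in the second case), which condition (4) forbids. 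Any mixed cycle contains a relation.
\end{itemize}
For the inverse, start from a paired fringed algebra with pairing $\psi$. Keep each $1$-arrow as an edge and reverse each $2$-arrow. Frame the result by $\psi$, putting $1$-edges below $2$-edges. Each step above reverses, so the two constructions are mutually inverse.

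\textbf{Flows.} The edges of $\Gamma$ and the arrows of $\tL_\Gamma$ are identified, so both polyhedra sit in the same space $\mathbb{R}^E$. Nonnegativity and the unit condition match: in a convenient $\Gamma$ the fringe arrows are exactly the source and sink edges, and the strength of a flow is half their total, which equals the outflow from the sources. For conservation, the two relations at $v$ are (incoming $1$-arrow, outgoing $1$-arrow) and (incoming $2$-arrow, outgoing $2$-arrow). Writing $e_1,e_2$ for the incoming edges and $f_1,f_2$ for the outgoing edges of $\Gamma$ at $v$, with subscripts given by $\psi$, conservation in $\tL_\Gamma$ reads $F(e_1)+F(f_1)=F(e_2)+F(f_2)$, up to how the pairs are grouped. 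This is not literally the DAG condition $F(e_1)+F(e_2)=F(f_1)+F(f_2)$. So at this point I would recheck the grouping of relations at $v$: it must be that the relations pair each incoming edge with an outgoing edge so that the $\tL$-condition becomes in $=$ out. I would verify this against Definition~\ref{defn:gent-from-dg}, because the equality $\F_1(\Gamma)=\F_1(\tL_\Gamma)$ is asserted.

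\textbf{Trails and compatibility.} Let $p$ be a path of $\Gamma$. Read it as a walk in $\tL_\Gamma$, traversing $1$-edges forward and $2$-edges as inverse letters. At a vertex, consecutive letters $(\text{in } e,\text{ out } f)$ form a string step exactly when the corresponding composition is not a relation. This characterizes strings as exactly the images of directed paths, which gives the bijections on routes and on bands up to equivalence. Compatibility is the step I expect to be the main obstacle. A DAG incompatibility $(e_1Rf_2,\ e_2Rf_1)$ with $e_1<e_2$ and $f_1<f_2$ has to translate into a kiss $(\alpha\sigma\gamma^{-1},\ \beta^{-1}\sigma\delta)$ over the common substring $\sigma=R$, and conversely. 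Since the framing order is $1<2$, I would prove this by a case check on the $\psi$-labels of $e_1,e_2,f_1,f_2$ at the two endpoints, including the case where $R$ is empty.
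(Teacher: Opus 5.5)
Your overall plan of verifying everything directly through the edge/arrow identification is the right one. The flow step, however, contains an actual error. As written, it leaves $\F_1(\Gamma)=\F_1(\tL_\Gamma)$ unproved and contradicts the rest of your argument.

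\textbf{The error: you misidentify the relations at $v$.} Your own first bullet already determines them. Let $e_1,e_2$ be the incoming and $f_1,f_2$ the outgoing edges of $\Gamma$ at $v$, with subscripts given by $\psi$.
\begin{itemize}
\item The arrows of $\tL_\Gamma$ ending at $v$ are $\tilde e_1,\tilde f_2$. Those starting at $v$ are $\tilde e_2,\tilde f_1$.
\item So the two mixed compositions, i.e.\ the relations at $v$, are $\tilde e_1\tilde e_2$ and $\tilde f_2\tilde f_1$.
\item The pairs you wrote down, $\tilde e_1\tilde f_1$ and $\tilde f_2\tilde e_2$, compose arrows with equal $\psi$-value. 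They are precisely the two non-relations.
\end{itemize}
Thus each relation consists either of the two incoming edges of $\Gamma$ at $v$ or of the two outgoing ones. The gentle conservation law $F(\alpha_1)+F(\alpha_2)=F(\beta_1)+F(\beta_2)$ is then literally $F(e_1)+F(e_2)=F(f_1)+F(f_2)$.

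The repair you propose, that relations pair each incoming edge with an outgoing edge, is backwards. It would yield an equation of the form $F(e_i)+F(f_j)=F(e_{i'})+F(f_{j'})$. It would also destroy your trail correspondence: a directed path entering $v$ along $e_i$ and leaving along $f_j$ would traverse a relation and fail to be a string. Your trail paragraph silently uses the correct grouping, which is why the two parts of your proposal are inconsistent.

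\textbf{How to fix it.} State the grouping as a lemma before everything else, since it drives all three parts.

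\textbf{Trails.} A string cannot turn at $v$ from one edge of a relation to the other, since in either direction that step is a relation or its inverse. It also cannot backtrack. Gentleness rules out loops, since a loop is a monochromatic oriented cycle. All remaining steps at $v$ are allowed. So strings through $v$ are exactly the walks crossing between $\{e_1,e_2\}$ and $\{f_1,f_2\}$, i.e.\ images of directed paths up to inversion.

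\textbf{Compatibility.} With the lemma, no case analysis on labels is needed.
\begin{itemize}
\item Under the gentle framing, $e_1<e_2$ at $v$ and $f_1<f_2$ at $w$ force $\psi(e_1)=\psi(f_1)=1$ and $\psi(e_2)=\psi(f_2)=2$.
\item So the incompatibility $(e_1Rf_2,e_2Rf_1)$ becomes $(\tilde e_1R\tilde f_2^{-1},\tilde e_2^{-1}R\tilde f_1)$, writing $R$ also for its image.
\item This is a kiss with $\alpha\beta=\tilde e_1\tilde e_2$ at $v$ and $\gamma\delta=\tilde f_2\tilde f_1$ at $w$. The lazy case $R=e_v$ is included.
\end{itemize}
For the converse, take a kiss $(\alpha\sigma\gamma^{-1},\beta^{-1}\sigma\delta)$.
\begin{itemize}
\item Both edges of $\alpha\beta$ lie on one side of $v$, so $\sigma$ (or $\gamma^{-1}$ if $\sigma$ is lazy) leaves $v$ on the other side.
\item After inverting the whole pair if necessary (kisses are closed under simultaneous inversion), $\sigma$ is a directed path and $\alpha\beta$ is the incoming pair at $v$.
\item Then $\gamma\delta$ is forced to be the outgoing pair at $w$. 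This is exactly a DAG incompatibility.
\end{itemize}
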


Proposition~\ref{prop:gentframe} states that fringed algebras and their bundle-subdivided turbulence polyhedra generalize gentle framed DAGs and their framing-triangulated flow polytopes.
For example, note the similarity between Figure~\ref{fig:sqdag} (featuring the bundle complex of a gentle framed DAG and the triangulated flow polytope) and Figure~\ref{fig:sqgent} (featuring the bundle complex of the analogous fringed algebra and its triangulated turbulence polytope).

\section{Turbulence Charts}
\label{sec:tc}

In Section~\ref{sec:back-turb}, we saw that fringed algebras generalize gentle framed DAGs in a way which respects the clique complex and flow polytope.
On the other hand, general framed DAGs also generalize gentle framed DAGs. We now define \emph{(framed) turbulence charts} and equip them with a bundle complex and turbulence polyhedron.
We will argue in future sections that that framed turbulence charts are the common generalization of framed DAGs and fringed algebras by extending presentation and subdivision results from these settings.

\subsection{Turbulence charts and turbulence polyhedra}
\label{ssec:turbchart}

In the following, we will deal heavily with undirected graphs.
We will assume throughout that all undirected graphs are finite, and that every vertex of an undirected graph is incident to at least one edge.

We call a vertex of an undirected graph $G$ \emph{internal} if it is incident to two or more edges (counting multiplicity), and \emph{fringe} if it is incident to only one edge.
An edge is \emph{fringe} if it is incident to a fringe vertex, and otherwise is \emph{internal}.
A \emph{half-edge} of $G$ at a vertex $v$ is a tuple $(e,v)$, where $e$ is an edge of $G$ incident to a vertex $v$. 
If $e$ starts and ends at the same vertex, then we still consider $e$ to be a part of two distinct half-edges.
Then every edge of $G$ is a part of exactly two half-edges. 
	In the following, we consider any undirected graph to have vertex set $V$, internal vertex set $V_{\textup{int}}$, and edge set $E$. 

\begin{defn}
	A \emph{turbulence chart} is a tuple $(G,\sim)$ where $G$ is an undirected graph and 
	$\sim:=\{\sim_v\ :\ v\in V_{\textup{int}}\}$ is the data of an equivalence relation $\sim_v$ splitting the half-edges of $G$ at $v$ into exactly two nonempty equivalence classes for every internal vertex $v\in V_{\textup{int}}$.
\end{defn}

See Figure~\ref{fig:turbchartexample} for an example of a turbulence chart. Fringe vertices are drawn as squares, and blue lines separate the equivalence classes of $\sim_v$ at each internal vertex $v$.

\begin{defn}\label{defn:tchart-poly}
	A function $F:E\to\mathbb R$ is a \emph{nonnegative flow} on $(G,\sim)$ if it satisfies
	\textbf{conservation of flow:} for any internal vertex $v$ of $(G,\sim)$, notating the equivalence classes of $\sim_v$ as $A$ and $B$ we have
			\[
				\sum_{(e,v)\in A}F(e)=\sum_{(e,v)\in B}F(e).
			\]
			A flow $F$ is \emph{nonnegative} if $F(e)\geq0$ for every edge $e\in E$.
	The \emph{strength} of a flow $F$ is the sum $\frac{1}{2}\sum_{\alpha\in E\text{ fringe}}F(\alpha)$. A flow is \emph{unit} if it has strength 1. A flow is a \emph{vortex} if it has strength 0.
	The \emph{cone of nonnegative flows} $\F_{\geq0}(G,\sim)$ is the space of nonnegative flows on $(G,\sim)$. The \emph{turbulence polyhedron} $\F_1(G,\sim)$ is the space of unit nonnegative flows on $(G,\sim)$.
\end{defn}

Both $\F_1(G,\sim)$ and $\F_{\geq0}(G,\sim)$ are rational polyhedra, since they are defined as finite intersections of rational hyperplanes and half-spaces in $\mathbb R^E$.
For brevity, we will restrict most of our discussion to $\F_1(G,\sim)$, though analogous results will hold for $\F_{\geq0}(G,\sim)$.
A turbulence chart with a unit flow labelled in blue is given in the left of Figure~\ref{fig:flow}.

\begin{figure}
	\centering
	\def\svgscale{.3}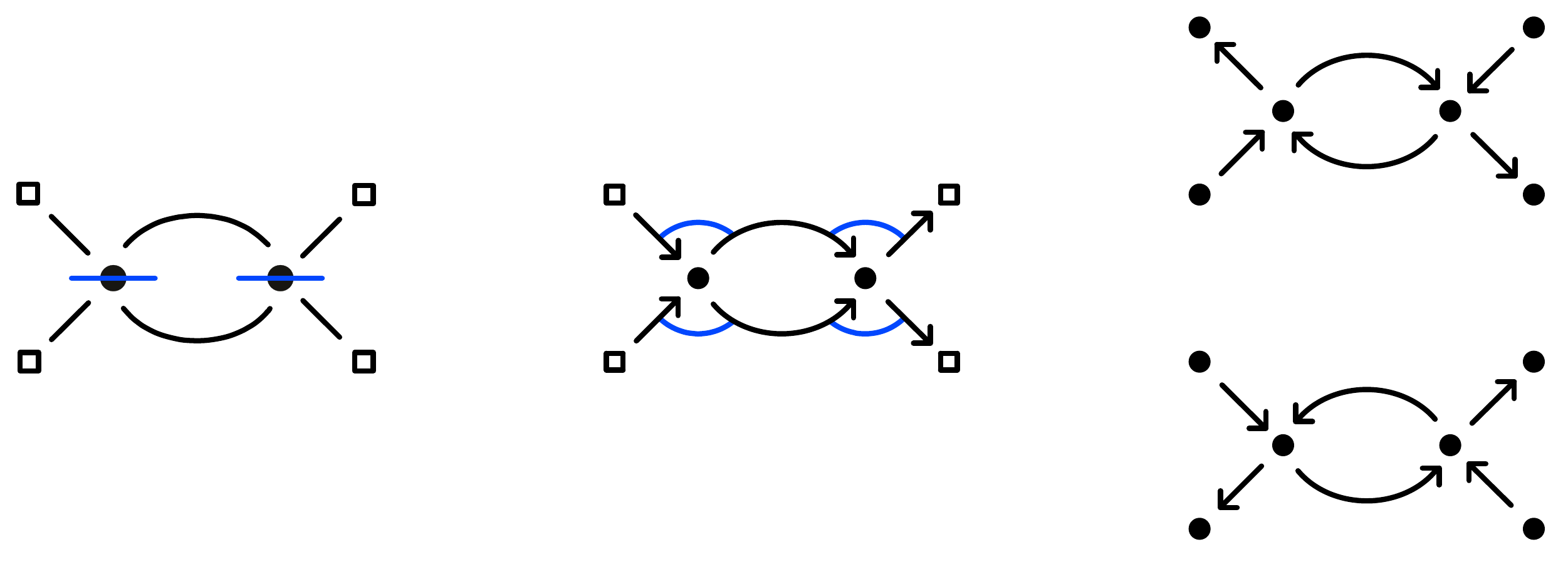
	\caption{A directable gentle turbulence polyhedron (left) with the corresponding fringed algebra (middle) and gentle framed directed graphs (right). Framings are labelled in red and the same unit flow is labelled in blue on all four diagrams.}
	\label{fig:flow}
\end{figure}

Given a convenient directed graph $G$, one may obtain a turbulence chart by taking the undirected graph of $G$ and letting $\sim_v$ separate the (formerly) incoming half-edges from the (formerly) outgoing half-edges. Note that the turbulence polyhedron of the resulting turbulence chart is equal to the flow polyhedron of $G$. For example, the two directed graphs on the right of Figure~\ref{fig:flow} both give rise to the turbulence chart on the left of the same figure. 
Similarly, a fringed algebra gives rise to a turbulence chart by separating the half-edges of the two relations through any internal vertex, and this preserves the turbulence polyhedron. 
See the left and middle of Figure~\ref{fig:flow}.
In Section~\ref{sec:gda} we will expand on the correspondence between turbulence charts and directed graphs, and between turbulence charts and fringed algebras.

\subsection{Routes and bands}

We wish to define routes and bands on a turbulence chart, generalizing routes on a DAG and trails on a fringed algebra.
It will be necessary to work with ``oriented walks'' along an unoriented graph. To do this, we need the notation of oriented edges.

\begin{defn}
	Define an \emph{oriented edge} of an undirected graph $G$ as a double of the form $\te:=e^te^h$, where $e^t$ and $e^h$ are the two distinct half-edges of one (unoriented) edge $e$. We consider $\te$ to \emph{start} at the vertex $t(\te)$ of $e^t$ and \emph{end} at the vertex $h(\te)$ of $e^h$. The \emph{inverse edge} $\te^{-1}$ is the oriented edge $e^he^t$.
\end{defn}

\begin{defn}
	A \emph{string} on a turbulence chart $(G,\sim)$ of length $m\geq0$ is a sequence $s=\te_1\te_2\dots\te_m$, where each $\te_j$ is an oriented edge of $G$ and for any $j\in[m-1]$, we have $h(\te_j)=t(\te_{j+1})$ and the half-edges $e_j^h$ and $e_{j+1}^t$ are in different equivalence classes of $\sim_{h(\te_j)}$.
	We say that $s$ \emph{starts} at $t(s):=t(\te_1)$ and \emph{ends} at $h(s):=h(\te_m)$.
	The \emph{inverse string} $s^{-1}$ is the sequence $\te^{-1}_{m}\te^{-1}_{m-1}\dots\te^{-1}_1$.
	For any $1\leq a\leq b\leq m$, the string $\te_a\te_{a+1}\dots\te_b$ is a \emph{substring} of $s$. 

	For any vertex $v$, there is a \emph{lazy string} $e_v$ which we think of as an empty walk starting and ending at the vertex $v$. We have $e_v=e_v^{-1}$. If $s=\te_1\te_2\dots\te_m$ is a string with $m\geq0$, then we consider $e_{t(\te_1)}$, $e_{t(\te_2)}$, \dots, $e_{t(\te_m)}$, and $e_{h(\te_m)}$ to be substrings of $s$.
\end{defn}

Under our conventions for drawing turbulence charts, a string is a walk on the graph subject to the rule that each time one reaches an internal vertex one may only continue by crossing the blue line at that vertex. See the right of Figure~\ref{fig:turbchartexample}.

\begin{defn}
	A \emph{route} on $(G,\sim)$ is a string which starts and ends at fringe vertices. 
	We consider a route $p$ to be \emph{equivalent} to $p^{-1}$.
	A \emph{band} on $(G,\sim)$ is a string $B$ such that $B^2:=B\circ B$ is a string and that $B$ is not a power of any strictly smaller string $B'$.
	A \emph{substring} of the band $B$ is a substring of any power of its underlying string.
	Two bands $B$ and $B'$ are \emph{equivalent} if the underlying string of $B'$ is a substring of $B$ or $B^{-1}$.
	A \emph{trail} of $(G,\sim)$ is a route or band of $(G,\sim)$.
	The turbulence chart $(G,\sim)$ is \emph{acyclic} if there are no bands of $(G,\sim)$.
\end{defn}

See the right of Figure~\ref{fig:turbchartexample} for an example of two routes and a band on a turbulence chart.
Our depictions of trails do not include a direction and our depictions of bands do not mark a start point, as we only care about trails up to equivalence.

\begin{figure}
	\centering
	\def\svgscale{0.21}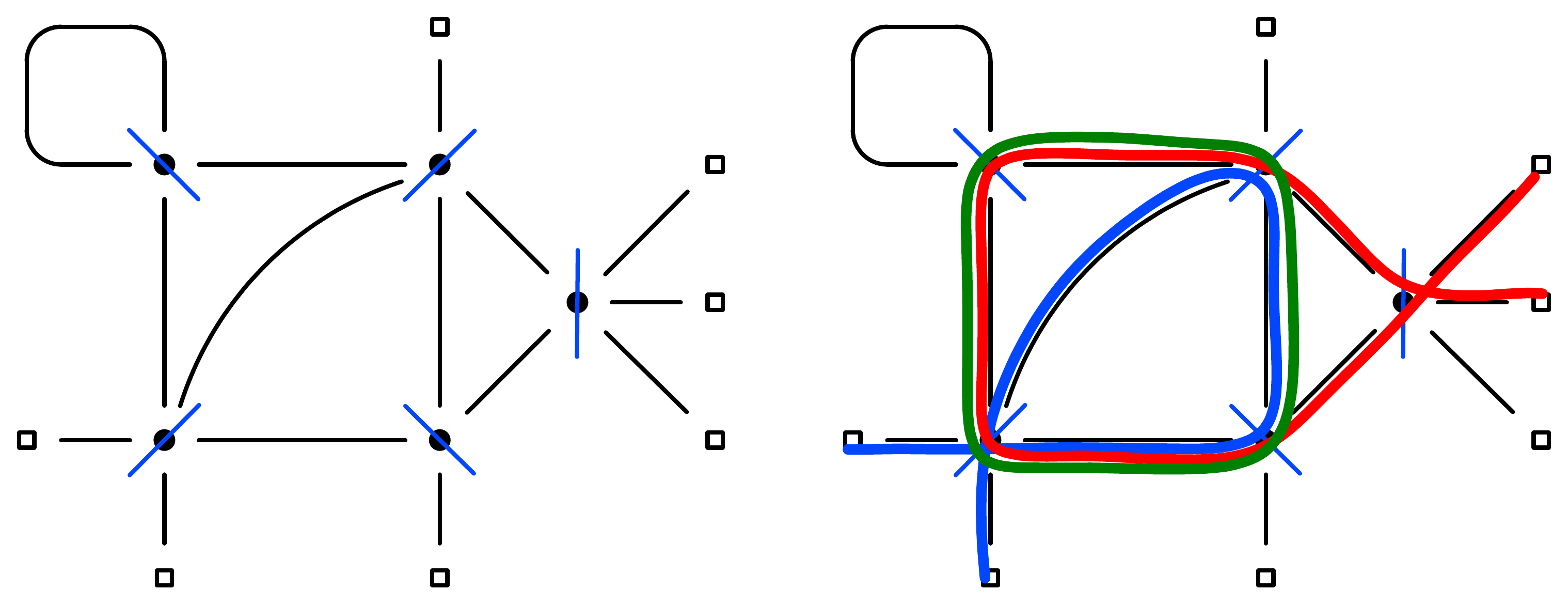
	\caption{On the left is a turbulence chart. On the right is the same turbulence chart with two routes (red and blue) and one band (green).}
	\label{fig:turbchartexample}
\end{figure}

\begin{defn}
	Let $p=\te_1\te_2\dots\te_m$ be a route or band of $(G,\sim)$. The \emph{indicator vector} $\I(p)$ is the map $E\to\mathbb R$ sending any edge $e$ to the number of indices $j\in[m]$ such that the underlying edge of $\te_j$ is $e$ (i.e., to the multiplicity of $e$ in the walk of $p$).
\end{defn}

It is immediate that if $p$ is a route of $(G,\sim)$ then $\I(p)$ is a unit flow, and if $B$ is a band of $(G,\sim)$ then $\I(p)$ is a vortex.

\subsection{Framed turbulence charts and the bundle complex}

We have seen that turbulence charts and their turbulence polyhedra generalize directed graphs and their flow polyhedra. We now generalize framings on directed graphs and their induced notions of compatibility.

\begin{defn}
	Let $(G,\sim)$ be a turbulence chart. Let $v$ be an internal vertex of $G$ and let $S_1$ and $S_2$ be the equivalence classes of half-edges at $v$ given by $\sim_v$. Assign separate linear orders to $S_1$ and $S_2$. This data, ranging over all internal vertices of $G$, is called a \emph{framing} $\R$ on $(G,\sim)$. We write $(G,\sim,\R)$ to denote a \emph{framed turbulence chart}. If $h_1$ and $h_2$ are half-edges in the same equivalence class of $\sim_v$ for some internal vertex $v$, we write $h_1<_{\R}h_2$ to represent that $h_1$ is lesser in the relevant order of $\R$.
\end{defn}

To denote a framing, we label the half-edges of a turbulence chart at internal vertices with red integers.
See Figure~\ref{fig:framedturbchartexample} for an example of a framed turbulence chart.

\begin{defn}
	Let $v$ and $w$ be internal vertices of $G$.
	Let $\te_1$ and $\tilde e_2$ be oriented edges ending at $v$ such that $\te_1^h$ and $\te_2^h$ are equivalent in $\sim_v$ and such that $\te_1^h<_{\R}\te_2^h$.
	Let $\tf_{1}$ and $\tf_{2}$ be oriented edges starting at $w$ such that $\tf_{1}^t$ and $\tf_{2}^t$ are equivalent in $\sim_w$ and such that $\tf_{1}^t<_{\R}\tf_{2}^t$.
	Choose any string $s$ (which may be a lazy string) so that $\te_1 s\tf_{1}$ is a string; then $\te_2 s\tf_{2}$ is also a string.
	We say that $(\te_1 s\tf_{1},\te_2 s\tf_{2})$ is an \emph{incompatibility}.
	Two trails $p$ and $q$ are \emph{incompatible} if without loss of generality there is a substring $p'$ of $p$ and $q'$ of $q$ such that $(p',q')$ is an incompatibility. Otherwise, they are \emph{compatible}.
	A \emph{clique} of $(G,\sim,\R)$ is a set of pairwise compatible routes (up to equivalence of routes).
	A \emph{bundle} of $(G,\sim,\R)$ is a set of pairwise compatible trails (up to equivalence of trails). The \emph{bundle complex} of $(G,\sim,\R)$ is the simplicial complex of bundles of $(G,\sim,\R)$.
\end{defn}

For example, in the middle of Figure~\ref{fig:framedturbchartexample}, the red and green routes agree along their shared substring and the red route leaves with half-edges lower than the green on both sides, hence they are incompatible.

We note that pairwise compatibility of trails of a bundle requires also that every trail of a bundle must be compatible with itself.

Given a drawing of a turbulence chart $(G,\sim)$, define the \emph{clockwise framing} by ordering half-edges clockwise around each internal vertex as in Figures~\ref{fig:trapezoid} and~\ref{fig:bowtie}. In this case, a trail is self-compatible if it can be drawn with no self-crossings, and a bundle is a collection of such trails which can be drawn together with no crossings. This is analogous to the notion of a \emph{planar framing} of an embedding of a DAG~\cite{vBC}, which orders edges bottom-to-top at all internal vertices.

\begin{remk}
	Intuitively, two trails $p$ and $q$ are incompatible if there exists a common (possibly lazy) substring of $p$ and $q$ which, without loss of generality, $p$ departs from $q$ low relative to $\R$ on both sides. We remark that this is opposite to the conventions for compatibility of framed DAGs: two routes $p$ and $q$ of a framed DAG are incompatible if they have a maximal shared subpath which (without loss of generality) $p$ enters low and leaves high relative to $q$.
	When we map framed DAGs to framed turbulence charts, we will reverse all outgoing framing orders (while keeping incoming framing orders the same); this will preserve compatibility of routes and map the planar framing to the clockwise framing.
\end{remk}

	\begin{figure}
		\centering
		\def\svgscale{0.21}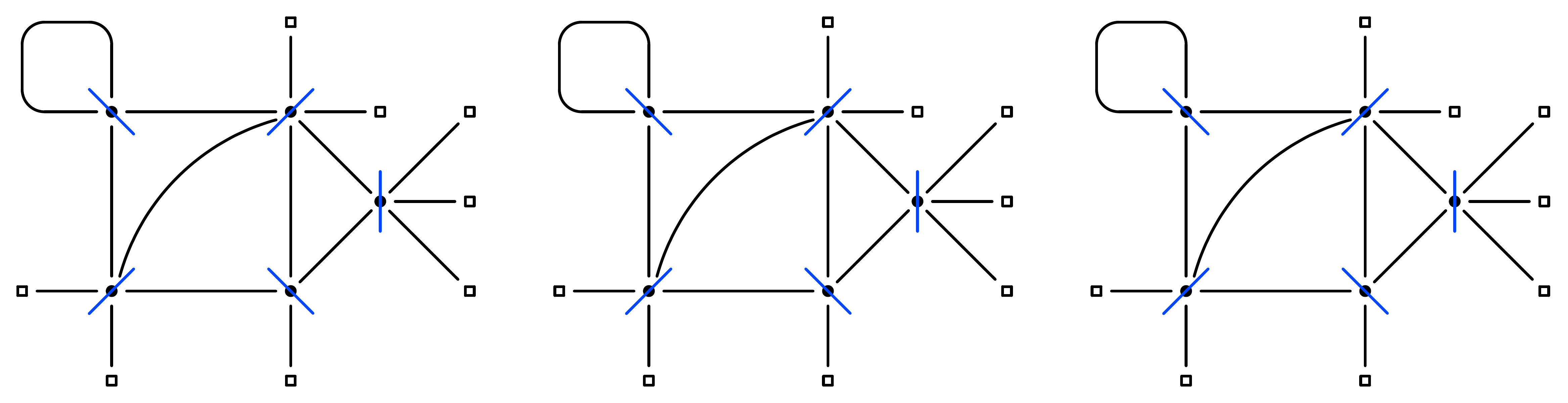
		\caption{On the left is a framed turbulence chart. The middle is not a bundle because the red route and green route are incompatible, and the right is not a bundle because the red route is incompatible with the blue band.}
		\label{fig:framedturbchartexample}
	\end{figure}

\section{Gentle, Directable, and Acyclic Turbulence Charts}
\label{sec:gda}

We now isolate three properties which a framed turbulence chart may have: directability, acyclicity, and gentleness. Directable turbulence charts are those which come from directed graphs. Gentle framed turbulence charts are those which come from fringed algebras. Acyclic turbulence charts are those with no bands -- equivalently, those charts giving bounded turbulence polyhedra. Acyclicity of a turbulence chart amounts to acyclicity of an associated directed graph (in the directable case) and representation-finiteness of an associated fringed algebra (in the gentle case).
Note that directability and acyclicity are really properties of the turbulence chart which do not depend on the framing, while gentleness does depend on the framing.
These three properties may be independently varied; see Figure~\ref{fig:CUBEOFGEN}.
We finish by discussing ampleness of framed turbulence charts, which is a slight generalization of gentleness studied in the framed DAG literature.

\subsection{Gentle turbulence charts}

We now connect certain framed turbulence charts to fringed algebras of gentle algebras.

\begin{defn}
	A turbulence chart $(G,\sim)$ is \emph{sub-full} if for every internal vertex $v$, each equivalence class of $\sim_v$ has cardinality one or two. Moreover, $(G,\sim)$ is \emph{full} if for every internal vertex $v$, each equivalence class of $\sim_v$ has cardinality exactly two. Note that this forces the degree of each internal vertex to be 4.
\end{defn}

\begin{defn}\label{defn:lonely}
	Let $(G,\sim,\R)$ be a framed turbulence chart.
	A half-edge $e$ of $G$ at a vertex $v$ is \emph{lonely} if $v$ is a fringe vertex or if $e$ is the only member of its equivalence class in $\sim_v$.
\end{defn}

\begin{defn}\label{defn:l}
	Let $(G,\sim,\R)$ be a sub-full framed turbulence chart.
	We say that a half-edge $h$ of $G$ at $v$ is \emph{high} if there is another half-edge of $G$ in the same equivalence class of $\sigma_v$ which is under $h$ in $<_{\R}$. Similarly, a half-edge $h$ of $G$ is \emph{low} if there is another half-edge of $G$ at $v$ above $h$ in $<_{\R}$.
	If a half-edge of $G$ at $v$ is fringe or is the only member of its equivalence class of $\sim_v$, then it is lonely and we consider it neither high nor low.
	If $\te=e^te^h$ is an oriented edge such that $e^t$ is low or lonely and $e^h$ is high or lonely, then we say that $\te$ is \emph{ascending}. Similarly, if $e^t$ is high or lonely and $e^h$ is low or lonely, then $\te$ is \emph{descending}.
	Note that if both half-edges are lonely, then $\te$ is both ascending and descending.
	If $\te$ is ascending and/or descending, then we say that the (unoriented) edge $e$ is \emph{steep}. Note that all fringe half-edges are lonely, so all fringe edges are steep.
\end{defn}

We may always consider a framing $\R$ on a full turbulence chart $(G,\sim)$ to be given by a labelling of the half-edges in $\{1,2\}$. In this case, the high half-edges are those which are forced to be labelled with 2 and the low half-edges are those which are forced to be labelled with 1.
Lonely half-edges may be given either label, and we may drop their labels in figures.

Given a framed full turbulence chart all of whose edges are steep, we wish to turn it into a gentle algebra by orienting each edge from its low half-edge to its high half-edge. For this to work, we need to assume some extra conditions as in the following definition.

\begin{defn}\label{defn:gentchart}
	A framed turbulence chart $(G,\sim,\R)$ is \emph{gentle} if
	\begin{enumerate}
		\item\label{gc1} $G$ is full,
		\item\label{gc1.5} every edge of $G$ is incident to at least one internal vertex,
		\item\label{gc2} every edge of $G$ is steep, and
		\item\label{gc3} no band of $(G,\sim)$ consists entirely of ascending oriented edges or entirely of descending oriented edges.
	\end{enumerate}
	We may also say that $\R$ is a \emph{gentle framing} on $(G,\sim)$.
	We may also say that the turbulence polyhedron $\F_1(G,\sim)$ is \emph{gentle} in this case.
	A framed turbulence chart satisfying (1), (2), and (3), but not necessarily (4), is \emph{locally gentle}.
\end{defn}

The following lemma is not strictly used in the following of the paper, but is an important observation regardless.

\begin{lemma}\label{lem:gentfringe}
	If $(G,\sim,\R)$ is a gentle turbulence chart, then every connected component of $G$ contains at least one fringe edge.
\end{lemma}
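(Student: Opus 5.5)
Argue by contradiction: suppose some connected component $C$ of $G$ contains no fringe edge, and produce a band of $(G,\sim)$ that is entirely ascending, contradicting condition~(\ref{gc3}) of Definition~\ref{defn:gentchart}.

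Since $C$ has no fringe edge, it has no fringe vertex, and every vertex of $C$ is internal. By fullness (condition~(\ref{gc1})), each such vertex has degree $4$, with each $\sim_v$-class containing exactly two half-edges. In a full chart no half-edge at an internal vertex is lonely, so each class at $v$ has exactly one low and one high half-edge. By steepness (condition~(\ref{gc2})), every edge of $C$ has one low and one high half-edge: an oriented edge with both ends low or both ends high would be neither ascending nor descending. So every edge of $C$ has a unique ascending orientation, from its low half-edge to its high one.

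Now build a walk in $C$. Start with any ascending oriented edge $\te_1$; it ends at a high half-edge $e_1^h$ at $v=h(\te_1)$. The opposite class of $\sim_v$ contains exactly one low half-edge $f^t$, and leaving along that edge gives an ascending oriented edge $\te_2$ such that $\te_1\te_2$ is a string. Iterating, we get an infinite string $\te_1\te_2\te_3\cdots$ of ascending oriented edges. Each step is forced, since the successor of an ascending edge is determined, so this is a deterministic dynamical system on the finite set of ascending oriented edges of $C$. Hence it is eventually periodic: some $\te_i=\te_j$ with $i<j$.

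The segment $\te_i\cdots\te_{j-1}$ is then a string whose square is a string, because the transition from $\te_{j-1}$ to $\te_j=\te_i$ is a legal string step. Replacing it by its primitive root if it is a proper power gives a band consisting entirely of ascending oriented edges, contradicting condition~(\ref{gc3}). The only point requiring care is the local bookkeeping at a vertex with a loop, where the two half-edges of one edge sit at the same vertex. But the high/low labels are attached to half-edges, so the unique-successor argument goes through unchanged.
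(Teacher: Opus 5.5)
Your proof is correct and is essentially the paper's argument. Both follow the forced ascending successor at each full internal vertex and use finiteness to produce an all-ascending band, contradicting condition~\eqref{gc3} of Definition~\ref{defn:gentchart}. The paper phrases it directly, showing that the ascending walk from any edge of $C$ must terminate at a fringe edge, rather than arguing by contradiction. Your passage to the primitive root is a small extra care that the paper omits.
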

\begin{proof}
	Let $C$ be a connected component of $G$. Recall that we assume throughout that every vertex of $G$ is incident to at least one edge, so choose an oriented edge $\te$ of $C$. By Definition~\ref{defn:gentchart}~\eqref{gc2}, the oriented edge $\te$ is ascending or descending; if necessary, replace $\te$ with $\te^{-1}$ so that it is ascending. 

	Set $\te_0:=\te$.
	Now, suppose we have chosen some ascending oriented edge $\te_j$.
	If $\te_j$ is fringe, then we have shown that $C$ contains a fringe edge and we are done. Otherwise, since $G$ is full there are exactly two half-edges at the internal vertex $v_j:=h(\te_j)$ not equivalent to $\te_j^h$ in $\sim_v$; define $\te_{j+1}$ to be the oriented edge beginning with the lesser of these two half-edges. In other words, $\te_{j+1}$ is the ascending oriented edge starting at $v$ such that $\te_j\te_{j+1}$ is a string.

	Suppose at some point that $\te_a=\te_b$ as oriented edges for $a<b$. Then $\te_a\te_{a+1}\dots\te_{b-1}$ is a band of $(G,\sim,\R)$ consisting only of ascending edges, contradicting Definition~\ref{defn:gentchart}~\eqref{gc3}. Because $G$ is finite, this shows that the process must terminate with some fringe edge $\te_j$ and the proof ends.
\end{proof}

We now biject gentle framed turbulence charts with fringed algebras of gentle algebras.

\begin{defn}\label{defn:charttogent}
	Let $(G,\sim,\R)$ be a gentle framed turbulence chart.
	We define a fringed algebra $\tL(G,\sim,\R)$ as follows
	\begin{enumerate}
		\item The internal (resp. fringe) vertices of $\tL(G,\sim,\R)$ are the internal (resp. fringe) vertices of $\tL(G,\sim,\R)$.
		\item For each internal edge $e$ of $G$, there is an arrow $\alpha_e$ of $\tL(G,\sim,\R)$ oriented from the low half-edge of $e$ to the high half-edge of $e$.
		\item For each fringe edge $e$ of $G$, 
			\begin{itemize}
				\item if the internal half-edge of $e$ is high then there is an arrow $\alpha_e$ of $\tL(G,\sim,\R)$ oriented from the fringe half-edge of $e$ to the high half-edge of $e$, and
				\item if the internal half-edge of $e$ is low then there is an arrow $\alpha_e$ of $\tL(G,\sim,\R)$ oriented from the low half-edge of $e$ to the fringe half-edge of $e$.
			\end{itemize}
		\item The relations of $\tL(G,\sim,\R)$ are the paths $\alpha_e\alpha_f$ such that the high half-edge of $e$ and the low half-edge of $f$ are at the same vertex $v$ and equivalent in $\sim_{v}$.
	\end{enumerate}
\end{defn}
We now go in the other direction:
\begin{defn}\label{defn:genttochart}
	Let $\tL$ be a fringed algebra. We define a gentle framed turbulence chart $(G_\tL,\sim_\tL,\R_\tL)$ as follows:
	\begin{enumerate}
		\item The graph $G$ is the underlying undirected graph of $\tL$.
		\item At any internal vertex $v$ with relations $\alpha_1\alpha_2$ and $\beta_1\beta_2$, the equivalence relation $(\sim_\tL)_v$ equates the (head) half-arrow $(\alpha_1,v)$ with the (tail) half-arrow $(\alpha_2,v)$ and the framing $\R_\tL$ rates $(\alpha_2,v)<(\alpha_1,v)$. Similarly, the equivalence relation $(\sim_{\tL})_v$ equates $(\beta_1,v)$ with $(\beta_2,v)$ and $\R_{\tL}$ rates $(\beta_2,v)<(\alpha_1,v)$.
	\end{enumerate}
\end{defn}

\begin{prop}\label{prop:gentandchart}
	The maps $(G,\sim,\R)\mapsto\tL(G,\sim,\R)$ and $\tL\mapsto(G_\tL,\sim_\tL,\R_\tL)$ defined above are mutual inverses realizing a bijection between gentle framed turbulence charts and fringed algebras.
	Moreover, the natural identification of edges of $(G,\sim,\R)$ to arrows of $\tL(G,\sim,\R)$ induces
	\begin{enumerate}
		\item an equivalence of turbulence polyhedra $\F_1(G,\sim)\cong\F_1(\tL(G,\sim))$,
		\item a bijection between routes (resp. bands) of $(G,\sim,\R)$ and routes (resp. bands) of $\tL(G,\sim,\R)$ which preserves indicator vectors and pairwise compatibility, and hence
		\item an isomorphism between the bundle complex of $(G,\sim,\R)$ and the bundle complex of $\tL(G,\sim,\R)$.
	\end{enumerate}
\end{prop}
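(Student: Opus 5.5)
The proof is a direct verification in four parts. First, both constructions are well defined. Second, they are mutually inverse. Third, strings correspond to strings under the edge-to-arrow identification. Fourth, that correspondence preserves compatibility.

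\textbf{Well-definedness.} Start with $\tL(G,\sim,\R)$. Since $G$ is full and every edge is steep (Definition~\ref{defn:gentchart}~\eqref{gc1},\eqref{gc2}), each internal half-edge is either high or low, because no internal half-edge is lonely. Steepness then means each internal edge has exactly one low and one high half-edge. Condition~\eqref{gc1.5} rules out edges between two fringe vertices. So each edge receives a unique orientation, from low to high.

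At an internal vertex $v$, the two classes of $\sim_v$ each contain one high and one low half-edge. Hence $v$ has in-degree and out-degree $2$. The two relations at $v$ pair, inside each class, the incoming arrow (high end) with the outgoing arrow (low end). Gentleness conditions (1)--(3) and (4) of Definition~\ref{defn:gentle} then hold locally at $v$. Fringe vertices have degree one, so this is a fringed quiver.

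Admissibility, i.e.\ finite-dimensionality, needs the absence of infinite paths avoiding relations. An oriented cycle with no relations corresponds to a band of $(G,\sim)$ made entirely of ascending oriented edges. Such a band is excluded by \eqref{gc3}. (Traversing the cycle in reverse gives the all-descending case.)

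Conversely, $(G_\tL,\sim_\tL,\R_\tL)$ is full, because internal vertices of $\tL$ have in- and out-degree $2$. Every edge is steep: an arrow goes from a tail half-edge, which is low, to a head half-edge, which is high. Fringe arrows have a lonely fringe end. An all-ascending band would be an oriented cycle avoiding relations. That would give infinitely many nonzero paths, contradicting finite-dimensionality. The two constructions visibly undo each other: orienting low$\to$high recovers the arrows, and the relations recover $\sim$ together with the high/low labels.

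\textbf{Polyhedra and strings.} Conservation of flow on the chart says the two $\sim_v$-classes have equal total flow. The classes are exactly the half-arrows of the two relations $\alpha_1\alpha_2$ and $\beta_1\beta_2$ at $v$. So the chart condition coincides with Definition~\ref{defn:flowgent}, and the fringe strength normalizations agree, giving $\F_1(G,\sim)\cong\F_1(\tL)$.

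For strings, take a chart string $\te_1\cdots\te_m$ and replace each $\te_j$ by $\alpha_{e_j}^{\pm1}$ according to whether $\te_j$ agrees with the arrow orientation. A step through $v$ must cross the $\sim_v$ separation. This holds exactly when the two consecutive half-arrows are not the two halves of one relation. The crossing rule forbids $\alpha\beta\in I$ and $\beta^{-1}\alpha^{-1}$. It also forbids backtracking $\alpha\alpha^{-1}$, since at a degree-4 vertex the same half-edge is in its own class. Loops need care here, because two distinct half-edges of one loop can sit in different classes.

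The map sends strings bijectively to strings, routes to routes, and bands to bands, respecting inversion and cyclic equivalence. It also preserves indicator vectors.

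\textbf{Compatibility (main obstacle).} The hardest step is matching the two notions of incompatibility. A chart incompatibility $(\te_1 s\tf_1,\te_2 s\tf_2)$ has $\te_1^h<\te_2^h$ in the same class at $v$. So $\te_1$ enters through the low half-edge, i.e.\ it is an outgoing arrow traversed inversely, giving $\beta^{-1}$. Likewise $\te_2$ enters through the high half-edge, an incoming arrow $\alpha$, where $\alpha\beta$ is the relation at $v$. The same analysis at $w$ gives $\tf_1=\delta$ (an outgoing arrow, traversed forward) and $\tf_2=\gamma^{-1}$ (an incoming arrow, traversed inversely), with $\gamma\delta$ the relation at $w$.

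This is exactly a kiss $(\alpha\sigma\gamma^{-1},\beta^{-1}\sigma\delta)$ as in Definition~\ref{defn:tilt-complex}, with the roles of the two strings swapped, and every kiss arises this way. I would check this case analysis carefully against the framing convention $(\alpha_2,v)<(\alpha_1,v)$ of Definition~\ref{defn:genttochart}, and also handle the lazy case $s=e_v$. Compatibility of trails is then preserved, and since bundles are defined purely by pairwise compatibility, the isomorphism of bundle complexes follows at once.
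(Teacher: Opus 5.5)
Your proposal is correct and is the same direct verification from the definitions that the paper gives: the paper simply asserts that the maps are mutually inverse, that condition~\eqref{gc3} is exactly finite-dimensionality, and that flows and compatibility match ``immediately'', while your case analysis matching kisses to chart incompatibilities supplies the details it omits. One small check is missing: in the converse direction you should verify condition~\eqref{gc1.5} for $(G_\tL,\sim_\tL,\R_\tL)$, which holds because every fringe arrow of $\tL$ is attached to a vertex of the original quiver $Q$.
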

\begin{proof}
	It is not hard to check that these maps are mutual inverses.
	We remark that Condition~\eqref{gc3} of Definition~\ref{defn:gentchart} is precisely the condition that $\tL(G,\sim,\R)$ has no oriented cycles which are bands, and hence that it is a finite-dimensional algebra.
	Condition~\eqref{gc1.5} amounts to the condition that a fringed algebra $\tL$ may not have an edge directly between fringed vertices, as this would form a connected component with no vertex of the original gentle algebra $\L$.

	With this in mind, it is evident that the map of Definition~\ref{defn:gentchart} gives a valid fringed algebra of a gentle algebra.
	It follows immediately from the definitions that $\F_1(G,\sim)\cong\F_1(\tL(G,\sim))$ and that the notion of compatibility, hence the bundle complex, is preserved by this map.
\end{proof}

Proposition~\ref{prop:gentandchart} effectively states that (resp. flows and bundles on) gentle turbulence charts are the same as (resp. flows and bundles on) fringed algebras.
For example, see the same flow labelled on the fringed algebra (middle) and turbulence chart (right) of Figure~\ref{fig:flow}. See also
Figures~\ref{fig:sqturb} and~\ref{fig:sqgent}, which give the turbulence polyhedra and bundle complexes of the simplest gentle turbulence chart and its fringed algebra.

\begin{remk}\label{remk:gentle-case}
	Recall the background sections~\ref{ssec:Bpres} and~\ref{ssec:Bsubd}, where presentation (Theorem~\ref{thm:BPres}) and subdivision (Theorems~\ref{thm:BClique} and \ref{thm:BBundle}) results were proven about the turbulence polyhedron of a fringed algebra $\tL$ using the combinatorics of bundles on $\tL$.
	In light of Proposition~\ref{prop:gentandchart}, these results may be thought of as giving presentations and subdivisions of turbulence polyhedra of gentle framed turbulence charts based on the combinatorics of their bundles. In particular, it is immediate that turbulence polyhedra of gentle framed turbulence charts admit clique triangulations and bundle subdivisions.
	In fact, we will prove that our presentation and subdivision results work for general framed turbulence charts by reducing to the gentle case and applying these fringed algebra results.
\end{remk}

\subsection{Directable turbulence charts}
\label{ssec:directable}

Recall from the end of Section~\ref{ssec:turbchart} that any directed graph $G$ gives rise to a turbulence chart $(G',\sim)$ such that $\F_1(G)=\F_1(G',\sim)$. We build on this idea by describing the class of turbulence charts which may be obtained in this way. Moreover, we extend it to a map from framed directed graphs to framed turbulence charts and show that this correspondence respects bundle complexes.

First, we describe which turbulence charts we hope to model through convenient framed directed graphs.

\begin{defn}\label{defn:directable}
	A turbulence chart $(G,\sim,\R)$ is \emph{directable} if the edges of $G$ may be given orientations such that at every internal vertex $v$, the equivalence relation $\sim_v$ separates the head half-edges from the tail half-edges.
\end{defn}

It is immediate that the directable turbulence charts of Definition~\ref{defn:directable} are related to directed graphs. We expand this connection to framed directed graphs and framed turbulence charts.

\begin{defn}\label{defn:chart-from-dg-}
	Let $\Gamma=(G,\R)$ be a convenient framed directed graph.
	We define a framed turbulence chart $(G_\Gamma,\sim_\Gamma,\R_\Gamma)$ as follows:
	\begin{enumerate}
		\item The graph $G_\Gamma$ is the underlying undirected graph of $G$.
		\item At each internal vertex $v\in G_\Gamma$, the equivalence relation $\sim$ separates the half-edges of $G_\Gamma$ coming from incoming edges to $v$ from those half-edges coming from outgoing edges from $v$.
			\begin{enumerate}
				\item The equivalence class coming from the incoming edges is ordered by $\R_\Gamma$ with the same order as $\R$, and
				\item the equivalence class coming from the outgoing edges is ordered by $\R_\Gamma$ with the opposite order as $\R$.
			\end{enumerate}
	\end{enumerate}
\end{defn}

If a framed directed graph $\Gamma=(G,\R)$ is not convenient, then we may consider the framed directed graph given by separating the source and sink vertices so that every source and sink vertex is incident to exactly one edge; this does not change the flow polytope or bundle complex. One can then apply the map defined above to obtain a framed turbulence chart.

\begin{prop}\label{prop:dg-to-chart}
	The map $\Gamma\mapsto(G_\Gamma,\sim_\Gamma,\R_\Gamma)$ of Definition~\ref{defn:chart-from-dg-} gives a map from convenient framed directed graphs to directable framed turbulence charts which restricts to a surjective map from convenient gentle framed directed graphs to gentle directable framed turbulence charts.
	Moreover, the natural identification of edges of $\Gamma$ to edges of $(G_\Gamma,\sim_\Gamma,\R_\Gamma)$ induces
	\begin{enumerate}
		\item an equivalence $\F_1(\Gamma)\cong\F_1(G_\Gamma,\sim_\Gamma)$,
		\item a bijection between routes (resp. bands) of $\Gamma$ and routes (resp. bands) of $(G_\Gamma,\sim_\Gamma,\R_\Gamma)$ which preserves indicator vectors and pairwise compatibility, and hence
		\item an isomorphism between the bundle complex of $\G$ and the bundle complex of $(G_\Gamma,\sim_\Gamma,\R_\Gamma)$.
	\end{enumerate}
\end{prop}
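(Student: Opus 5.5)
The map lands in directable charts by construction: the orientation inherited from $\Gamma$ separates head half-edges from tail half-edges at every internal vertex, which is exactly Definition~\ref{defn:directable}. The plan is to check items (1)--(3) directly from the definitions, and then handle the gentle restriction and its surjectivity.

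\textbf{Flow polyhedra.} The identification of edges gives a linear isomorphism $\mathbb R^{E(\Gamma)}\cong\mathbb R^{E(G_\Gamma)}$, and the conservation equations coincide, since $\sim_v$ separates $\inn(v)$ from $\out(v)$. Because $\Gamma$ is convenient, each fringe edge is either a source edge or a sink edge, so the chart strength is
\[
\tfrac12\Big(\sum_{\text{source edges}}F(e)+\sum_{\text{sink edges}}F(e)\Big).
\]
Summing the conservation equations over internal vertices shows that total source flow equals total sink flow. Hence unit flows correspond, giving (1).

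\textbf{Trails.} A directed path $e_1\cdots e_m$ of $\Gamma$ gives a string $\te_1\cdots\te_m$ with each $\te_j$ oriented tail to head, since consecutive half-edges lie in the classes $\inn$ and $\out$. Conversely, take any string on $G_\Gamma$. If it traverses one edge along its $\Gamma$-orientation, it enters $h(e)$ through an in-half-edge and must leave through an out-half-edge. So every oriented edge of the string agrees with the $\Gamma$-orientation, or every one is reversed. Thus strings of $G_\Gamma$ up to inversion are exactly directed paths of $\Gamma$, and routes and bands match up to equivalence with the same indicator vectors.

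\textbf{Compatibility.} Let $e_1<e_2$ in $\inn(v)$ and $f_1<f_2$ in $\out(w)$ under $\R$. The incompatibilities of $\Gamma$ are the pairs $(e_1Rf_2,\,e_2Rf_1)$. In $\R_\Gamma$ the out-order is reversed, so $f_2<_{\R_\Gamma}f_1$, and the chart incompatibility with $\te_1=e_1$, $\tf_1=f_2$ is precisely $(e_1Rf_2,\,e_2Rf_1)$. The chart definition also allows both strings to be read in reverse, $(\tf^{-1}s^{-1}\te^{-1})$. Since substrings of trails are taken up to $p^{\pm1}$, this adds nothing new: the reversed incompatibility is the inverse of a forward one. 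This gives (2), and (3) follows immediately.

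\textbf{Gentle restriction.} I expect this to be the main obstacle, mainly in matching conventions carefully. First suppose $\Gamma$ is gentle. Then $G_\Gamma$ is full, it has no source-to-sink edges, and $\psi_\R$ makes every edge steep. To see steepness, note the reversal on out-orders means a 1-edge $e$ is low at its tail and high at its head in $\R_\Gamma$, while a 2-edge is high at its tail and low at its head. Hence 1-edges are ascending along their orientation and 2-edges are descending, with lonely fringe half-edges harmless. Bands of $G_\Gamma$ are oriented cycles of $\Gamma$, and an all-ascending (resp. all-descending) band is one whose edges are all 1-edges (resp. all 2-edges) read in a fixed direction. Condition~\eqref{dgh3} forbids exactly this, giving Definition~\ref{defn:gentchart}~\eqref{gc3}.

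For surjectivity, take a gentle directable framed chart and choose an orientation witnessing directability. Steepness of internal edges forces each edge's two half-edges to be opposite in high/low. Define $\R$ by keeping the in-orders and reversing the out-orders, and set $\psi=1$ on edges that are low at the tail and $\psi=2$ otherwise. Then check conditions (1)--(4) of Definition~\ref{defn:amply-framed-dg} by running the verification of the previous paragraph backwards. Fringe edges need a short separate check: each has one lonely end, and its label is determined by the internal end.
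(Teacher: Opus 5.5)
Your verification follows the same route as the paper, which just says the result is immediate from the definitions. Your trail and compatibility arguments are sound, including the point that chart incompatibilities entering through an out-class are inverses of incompatibilities in the sense of Definition~\ref{defn:compatible}. However, two steps fail as written.

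\textbf{The 1-edge/2-edge convention is backwards.} In $\R$, a 1-edge $e$ is the lesser element of $\out(t(e))$ and of $\inn(h(e))$ (at whichever ends are internal). Since $\R_\Gamma$ reverses only the out-orders, $e$ is \emph{high} at its tail and \emph{low} at its head. So 1-edges are descending and 2-edges ascending along $\Gamma$'s orientation, not the reverse.
\begin{itemize}
\item For the forward direction this is harmless, because steepness and the band condition are symmetric in the labels $1,2$.
\item In the surjectivity step it is not harmless. Your rule ``$\psi=1$ on edges low at the tail'' gives label $1$ to the edge that is the \emph{larger} element of $\out(t(e))$ in your reconstructed $\R$, since you reversed the out-orders back. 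Hence condition (3) of Definition~\ref{defn:amply-framed-dg} fails.
\item Since $\psi_\R$ is forced, you must instead set $\psi=1$ on edges that are high at the tail in the chart, equivalently low at the head. For fringe edges, use the internal end. Steepness then makes the label consistent at both ends.
\end{itemize}

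\textbf{The strength identity overlooks source-to-sink edges.} Your formula $\tfrac12\big(\sum_{\text{source}}F+\sum_{\text{sink}}F\big)$ assumes no edge is both a source edge and a sink edge. Convenience does not rule out an edge $e_0$ running directly from a source to a sink. Such an edge is a single fringe edge of $G_\Gamma$, so it contributes $\tfrac12F(e_0)$ to the chart strength but $F(e_0)$ to the unit condition of Definition~\ref{defn:flow-polytope}.
\begin{itemize}
\item In that case (1) is false under the natural identification of edges. For a single edge from source to sink, $\F_1(\Gamma)=\{1\}$ but $\F_1(G_\Gamma,\sim_\Gamma)=\{2\}$, and the route's indicator vector is not a unit flow of the chart.
\item You need either to assume $\Gamma$ has no source-to-sink edges, or to say that (1) holds only after doubling the coordinates of such edges. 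The assumption is automatic for gentle $\Gamma$ by condition (2) of Definition~\ref{defn:amply-framed-dg}.
\item The paper's one-line proof passes over this case as well.
\end{itemize}
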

\begin{proof}
	This result follows immediately from the definitions.
\end{proof}

The map of Proposition~\ref{prop:dg-to-chart} is not injective. It is immediate that any connected turbulence chart is the image of precisely two convenient framed directed graphs because switching the direction of all edges of $G$ and the ordering of the framing $F$ at every vertex preserves the associated framed turbulence chart $(G_\Gamma,\sim_\Gamma,\R_\Gamma)$.
See the two framed directed graphs on the right of Figure~\ref{fig:flow}, which both correspond to the same framed turbulence chart in the middle of the figure.

\subsection{Acyclic turbulence charts}

Recall that a turbulence chart $(G,\sim)$ is \emph{acyclic} if it has no bands. 
When $(G,\sim)$ is directable, this corresponds to  acyclicity of any corresponding directed graph. When $(G,\sim)$ has a gentle framing $\F$, this corresponds to representation-finiteness of the associated fringed algebra.

Note that the indicator vector of any band of $(G,\sim)$ is a vortex, and hence generates a ray of the recession cone of $\F_1(G,\sim)$.
It is then immediate that if $(G,\sim)$ is not acyclic, then $\F_1(G,\sim)$ is unbounded. In fact, the converse holds as well, so that $(G,\sim)$ is acyclic if and only if $\F_1(G,\sim)$ is unbounded. We could prove this now, but for brevity reference Corollary~\ref{cor:acyclic}.
For example, compare the acyclic
Figure~\ref{fig:trapezoid}
with the nonacyclic Figure~\ref{fig:bowtie}.

We now connect turbulence polytopes of acyclic turbulence charts with flow polytopes of signed graphs, which were defined by M\'esz\'aros and Morales~\cite{MMSigned} as a generalization of flow polytopes of (signless) graphs from type $A_n$ root systems to type $C_{n+1}$ and/or $D_{n+1}$.
The article~\cite{MMSigned} and further works~\cite{CKM,Kim,Zeilberger}  went on to study volume formulas for flow polytopes of signed graphs, and in particular the type-$C_{n+1}$ and type-$D_{n+1}$ analogs of the Chan-Robbins-Yuen Polytope.
For more detail and information on signed graphs, we refer to~\cite[\S2]{MMSigned}.

\begin{defn}[{\cite[\S2]{MMSigned}}]
	\label{defn:signed-graph}
	A \emph{signed graph} is an undirected graph $G$ on vertex set $[n+1]$ with a \emph{sign} $\epsilon\in\{+,-\}$ assigned to each of its edges. Loops and multiple edges are allowed. The sign of a loop is always $+$.
	A positive edge $e$ between $i$ and $j$ is \emph{positively incident} to both of its endpoints, written $\text{inc}(e,i)=\text{inc}(e,j)=+$. A negative edge between $i$ and $j$ for $i<j$ is \emph{positively incident} to $i$, written $\text{inc}(e,i)=+$, and \emph{negatively incident} to $j$, written $\text{inc}(e,j)=-$.
\end{defn}

\begin{defn}[{\cite[\S2]{MMSigned}}]
	Let $\bf a$ be a \emph{netflow vector} in $\mathbb Z_{\geq0}^{n+1}$.
	An \emph{$\bf a$-flow} is a vector $F\in\mathbb R_{\geq0}^{n+1}$ such that for all vertices $v$ of $G$, the following equality is satisfied:
	\[
		\sum_{e\in E\ :\ \text{inc}(e,v)=-}F(e)+a_v=\sum_{e\in E\text{ not a loop}\ :\ \text{inc}(e,v)=+}F(e)
		+
		\sum_{e\in E\text{ loop}\ :\ \text{inc}(e,v)=+}2F(e).
	\]
\end{defn}

Note that if edges have negative sign, then we recover traditional (signless) flow polytopes.

\begin{prop}\label{prop:signed}
	The set of turbulence polyhedra of acyclic turbulence charts is equal to the set of signed flow polytopes of signed graphs with netflow vector $(2,0,\dots,0)$.
\end{prop}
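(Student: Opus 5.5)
The plan is to match the two conservation laws vertex by vertex. I will collapse all fringe vertices of a turbulence chart to the single signed-graph vertex $1$, which carries netflow $2$. I will then read the two $\sim_v$-classes at an internal vertex $v$ as the ``positively incident'' and ``negatively incident'' half-edges at $v$.

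\textbf{Signed graph $\Rightarrow$ chart.} This direction is the easier one and I will do it first. Let $G$ be a signed graph with netflow $(2,0,\dots,0)$. No edge is negatively incident to vertex $1$, since the negative end of a negative edge is always the larger endpoint. So at vertex $1$ the defining equation says that the total flow on incident half-edges (loops counted twice) is $2$.

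I will replace vertex $1$ by one new fringe vertex for each half-edge at $1$. A loop at $1$ becomes an edge between two fringe vertices. The fringe equation then becomes exactly the unit condition $\frac12\sum_{\text{fringe}}F=1$.

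At a vertex $v\ge2$, I will let $\sim_v$ separate the negatively incident half-edges from the positively incident ones. Both halves of a loop go into the positive class, which accounts for the factor $2$. The signed equation at $v$ is then literally conservation of flow.

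If one class at $v$ is empty, the equation forces every edge at $v$ to carry zero flow. Those coordinates can be discarded, or I can adjoin a dummy pendant edge in each class, so this is a degenerate case I will dispose of separately. The two polyhedra then coincide coordinatewise.

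For acyclicity of the resulting chart: a band would give a nonzero vortex, hence a nonzero recession direction. This contradicts that the signed flow polytope is bounded. (This is also the forward direction of Corollary~\ref{cor:acyclic}, which I may cite.)

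\textbf{Chart $\Rightarrow$ signed graph.} Reversing the construction, I must choose, at each internal vertex $v$, which class is ``$+$'' and which is ``$-$'', and then choose a numbering of the internal vertices by $2,\dots,n+1$. Two conditions must hold.
\begin{enumerate}
\item No internal edge has both half-edges in ``$-$'' classes.
\item Every edge with one ``$+$'' end and one ``$-$'' end has its ``$-$'' end at the larger-numbered vertex.
\end{enumerate}
Fringe half-edges are automatically ``$+$'' at vertex $1$. Once such a choice exists, the equations match exactly as above, so the whole proposition reduces to this combinatorial existence statement. This is where acyclicity must enter.

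\textbf{Main obstacle: the existence of the sign choice and numbering.} This is the step I expect to be hardest. I would first try an induction that peels off one internal vertex at a time.
\begin{itemize}
\item The inductive step needs an internal vertex $v$ and a class $A$ at $v$ such that every edge in $A$ leads either to vertex $1$ (a fringe edge) or to an already-numbered vertex at which it sits in the ``$+$'' class.
\item Declare $A$ negative and give $v$ the next number; its other class becomes ``$+$''.
\item Existence of such a $v$ should follow from acyclicity. If no vertex qualifies, repeatedly following an unprocessed edge and crossing $\sim$ at each vertex produces a string that never terminates. By finiteness some oriented edge then recurs, and the segment between two occurrences is a band, contradicting acyclicity.
\end{itemize}
The two places I expect to need care are making this descending-chain argument precise and checking that conditions 1 and 2 are actually preserved at each step.
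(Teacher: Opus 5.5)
Your proposal is correct and is essentially the paper's proof: you split or merge vertex $1$ into the fringe vertices and read the two $\sim_v$-classes as the positive and negative incidences. For the converse you order the internal vertices by repeatedly choosing one with a class all of whose edges leave the unprocessed set, and existence comes from the infinite-string-gives-a-band argument. Your extra requirement that such edges land in the ``$+$'' class of an already-numbered vertex is automatic, because an edge in the ``$-$'' class of a numbered vertex only reaches vertex $1$ or vertices numbered earlier still.

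You are more careful than the paper in the signed-graph-to-chart direction, which it calls immediate. Two small fixes there. For an empty sign class, only discarding the forced-zero edges works; a new pendant edge is a fringe edge and admits new flow. Acyclicity is cleaner to see directly: at the minimal-index vertex of a band, one traversed half-edge is negatively incident, and its edge would lead to vertex $1$ or to an earlier vertex. This also avoids your boundedness argument's tacit need for the polytope to be nonempty.
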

\begin{proof}
	We will prove the proposition as follows: Given a signed graph with netflow vector $(2,0,\dots,0)$, we will obtain an acyclic turbulence chart with the same turbulence polytope, and given an acyclic turbulence chart we will obtain a signed graph with netflow vector $(2,0,\dots,0)$ with the same signed flow polytope.

	Start with a signed graph $G$ with netflow vector ${\bf a}=(2,0,\dots,0)$.
	Let $G'$ be the graph given by splitting vertex $1$ of $G$ into multiple (fringe) vertices, such that every fringe vertex has degree 1. For each (internal) vertex of $\{2,\dots,n+1\}$, let $\sim_v$ separate the half-edges with positive incidence from the half-edges with negative incidence. It is then immediate from the definitions that the signed flow polytope of $(G',{\bf a})$ is equal to $\F_1(G',\sim)$.

	For the other direction, start with an acyclic turbulence chart $(G,\sim)$ with $n$ internal vertices. We will obtain a signed graph $G'$ by combining all fringe vertices of $G$ into a single vertex $1$, and choosing an bijection from the internal vertices of $G$ to $\{2,\dots,n+1\}$ and an assignment in $\{+,-\}$ to the equivalence classes $\sim_v$ at each internal vertex to realize $G'$ as a signed graph.

	\textbf{\underline{Claim:}} If $V$ is a nonempty subset of the internal vertices of $G$, then there exists a vertex $v\in V$ and an equivalence class $v^-$ of $\sim_{v}$ such that all edges with one half-edge in $v^-$ have their second half-edge incident to a vertex outside of $V$. Indeed, if this is not the case, then start with any $w_1\in V$. Since the claim is false, we may choose an edge $e_1$ from $w_1$ to a vertex $w_2\in V$. Again, since the claim is false, we may choose an edge $e_2$ from $w_2$ to a vertex $w_3\in V$ such that $e_1e_2$ is a string. We repeat this process indefinitely to get an infinite string $e_1e_2e_3\dots$. Since $G$ is finite, this string must have a subset which is a band, contradicting acyclicity of $G$. This proves the claim.

	By the claim with $V$ being the set of all internal vertices, we may choose a vertex $v_2$ of $G$ and an equivalence class $v_2^-$ of $\sim_{v_2}$ such that all edges with a half-edge in $v_2^-$ are fringe. Let $v_2^+$ be the other equivalence class of $\sim_{v_2}$. Moreover, we may apply the claim inductively with $V_j$ being the set of internal vertices missing $\{v_1,v_2,\dots,v_{j-1}\}$ to choose an order $\{v_2,\dots,v_{n+1}\}$ on the internal vertices of $G$ with equivalence classes $v_j^-$ of $\sim_{v_j}$ for $j\in\{2,\dots,n+1\}$ such that every edge with a half-edge in $v_j^-$ has a second half-edge incident to a fringe vertex or $v_i^+$ with $i<j$.
	In particular, every edge of $G$ is either
	\begin{itemize}
		\item incident to a fringe vertex and some $v_j^-$ (\emph{negative}),
		\item incident to $v_i^+$ and $v_j^-$ for $i<j$ (\emph{negative}), or
		\item incident to $v_i^+$ and $v_j^+$ for $i<j$ (\emph{positive}).
	\end{itemize}

	Then define $G'$ to be the signed graph on vertex set $[n+1]$ whose edges are in bijection with those of $G$ as induced by $\{v_2,\dots,v_{n+1}\}$ and whose signs are induced by the above characterization. It is now immediate from the definitions that the signed flow polytope of this signed graph with netflow vector $(2,0,\dots,0)$ is equal to the turbulence polytope $\F_1(G)$.
\end{proof}

The main result Theorem~\ref{ithm:bundle} of this paper, then, gives unimodular ``framing triangulations'' to flow polytopes of signed graphs with netflow vector $(2,0,\dots,0)$.

\subsection{Amply framed turbulence charts}
\label{ssec:ample}

Amply framed DAGs are a slight generalization of gentle framed DAGs introduced by Danilov, Karzanov, and Koshevoy~\cite{DKK} in their original paper on framing triangulations.
In this section, we will generalize amply framed DAGs by defining amply framed turbulence charts and show that they generalize gentle framed turbulence charts.
We show that amply framed turbulence charts are precisely those whose ``reduced nonnegative space cones'' are complete.

\begin{defn}
	A trail of a framed turbulence chart $(G,\sim,\R)$ is \emph{exceptional} if it is compatible with every trail.
	An edge $e$ of $(G,\sim)$ is called \emph{valid} if there exists a flow which is nonzero on $e$.
	The turbulence chart $(G,\sim,\R)$ is \emph{amply framed} if every valid edge is part of an exceptional trail.
\end{defn}

In~\cite[\S6]{DKK}, amply framed DAGs were connected with the reduced nonnegative spaces of nonnegative flows.
We give the turbulence chart versions of these definitions here.

\begin{defn}
	Let $(G,\sim,\R)$ be a turbulence chart and let $\mathcal E$ be its set of exceptional bundles. Define the \emph{reduced space} $\F_{\text{red}}$ to be the space of (not necessarily nonnegative) flows quotiented by the span $\mathbb R\mathcal E$ of the indicator vectors of exceptional bundles.
	Define the \emph{reduced nonnegative space}  $\F_{\geq0,\text{red}}:=\F_{\geq0}/\mathbb R\mathcal E$ to be the space of nonnegative flows quotiented by $\mathbb R\mathcal E$.
\end{defn}

The following generalizes~\cite[Proposition 5]{DKK}.

\begin{prop}\label{prop:amp}
	The following are equivalent for a framed turbulence chart $(G,\sim,\R)$:
	\begin{enumerate}
		\item $(G,\sim,\R)$ is amply framed;
		\item The reduced nonnegative space $\F_{\geq0,\text{red}}$ is the whole space $\F_{\text{red}}$.
	\end{enumerate}
\end{prop}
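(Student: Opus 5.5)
The proof will follow the argument of \cite[Proposition 5]{DKK}. Write $V=\mathbb R\mathcal E$ for the span of indicator vectors of exceptional trails (the exceptional bundle is just the set of all exceptional trails, since exceptional trails are pairwise compatible). The key observation is that $\F_{\geq0,\text{red}}$ is the image of the convex cone $\F_{\geq0}$ in the vector space $\F_{\text{red}}$. So (2) holds if and only if $\F_{\geq0}+V$ is the whole space of flows.

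\textbf{(1)$\Rightarrow$(2).} Suppose every valid edge lies on an exceptional trail. Let $\Sigma=\sum_{p\text{ exceptional}}\I(p)\in V$. By hypothesis $\Sigma$ is a nonnegative flow with $\Sigma(e)>0$ for every valid edge $e$. Any flow $F$ is supported on valid edges, by definition of validity. Hence for $N$ large, $F+N\Sigma$ is nonnegative on valid edges and zero on invalid edges, so $F+N\Sigma\in\F_{\geq0}$. Therefore $F\equiv F+N\Sigma$ modulo $V$, and $F$ lies in the image of $\F_{\geq0}$. This direction is routine.

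\textbf{(2)$\Rightarrow$(1).} Suppose $\F_{\geq0}+V$ is the whole flow space, and let $e$ be a valid edge. Pick a flow $F$ with $F(e)\neq0$; after negating $F$ we may assume $F(e)<0$. By (2), write $F=G+\sum_p c_p\I(p)$ with $G\geq0$, the sum running over exceptional trails $p$ with $c_p\in\mathbb R$. Evaluating at $e$ gives $\sum_p c_p\I(p)(e)=F(e)-G(e)<0$. So some exceptional trail $p$ has $\I(p)(e)\neq0$, i.e.\ $p$ uses $e$. This gives (1).

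The main subtlety, rather than any real obstacle, is the definitions. One must confirm that ``exceptional bundles'' spans the same space as indicator vectors of exceptional trails. Any bundle consisting of exceptional trails has indicator vectors in the span of exceptional trails, and conversely each exceptional trail forms an exceptional bundle on its own. Similarly, in (1)$\Rightarrow$(2) one must use that indicator vectors of trails (routes and bands) are flows in the sense of Definition~\ref{defn:tchart-poly}, which the paper notes after defining $\I$. No earlier structural theorem beyond these definitions is needed.
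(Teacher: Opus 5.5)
Your proposal is correct and follows essentially the paper's argument. For (1)$\Rightarrow$(2) you add a large multiple of the sum of indicator vectors of exceptional trails, where the paper takes the multiplier $M=\max_e|F(e)|$. For (2)$\Rightarrow$(1) you evaluate a decomposition $F=F'+v$, with $F'\geq 0$ and $v\in\mathbb R\mathcal E$, at the edge $e$; the paper phrases this as a contrapositive.

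One small point, shared with the paper's proof: it is not clear a priori that there are finitely many exceptional trails. To avoid needing that, define $\Sigma$ as the sum of one chosen exceptional trail through each valid edge, rather than the sum over all exceptional trails.
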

\begin{proof}
	Suppose $(G,\sim,\R)$ is amply framed. For any flow $F$ on $(G,\sim)$, set $M:=\text{max}_{e\in E}|F(e)|$. Then $F+\sum_{p\in\mathcal E}M\mathcal I(p)$ is a nonnegative flow which is equivalent to $F$ modulo $\mathbb R\mathcal E$, so $F+\mathbb R\mathcal E\in\F_{\text{red}}$ is in $\F_{\geq0,\text{red}}$. This shows that $\F_{\geq0,\text{red}}=\F_{\text{red}}$.

	On the other hand, suppose $(G,\sim,\R)$ is not amply framed. Then there is a valid edge $e$ of $(G,\sim,\R)$ which is part of no exceptional trail. Since $e$ is valid, there exists a flow $F$ which is nonzero on $e$. If $F(e)>0$, then replace $F$ with $-F$ so that $F(e)<0$.
	Since $e$ is part of no exceptional trail, it is impossible to realize $F$ as a nonnegative flow added to an element of $\mathbb R\mathcal E$, hence the equivalence class $F+\mathbb R\mathcal E\in\F_{\text{red}}$ is not in $\F_{\geq0,\text{red}}$ so $\F_{\geq0,\text{red}}\subsetneq\F_{\text{red}}$.
\end{proof}

\begin{lemma}\label{lem:gentisamp}
	A locally gentle framed turbulence chart $(G,\sim,\R)$ is amply framed.
\end{lemma}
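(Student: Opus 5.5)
Since ampleness only asks that every valid edge lie on an exceptional trail, I will simply exhibit, for every edge $e$ of a locally gentle chart, an exceptional trail through $e$; validity of $e$ will play no role. The candidates are the ``straight'' trails. By Definition~\ref{defn:gentchart}~\eqref{gc2}, every oriented edge can be reoriented to be ascending. As in the proof of Lemma~\ref{lem:gentfringe}, fullness lets me extend an ascending oriented edge $\te$ uniquely forward: at the internal head, take the lesser (low) half-edge of the opposite class, which again begins an ascending oriented edge. Symmetrically, I extend backward: at the tail of $\te$, which is low or lonely, take the high half-edge of the opposite class, so that the preceding oriented edge is also ascending.

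Next I check that the maximal ascending extension $p$ through $\te$ is a trail. By finiteness, the forward extension either reaches a fringe vertex or revisits an oriented edge, and likewise backward. In the gentle case (4) rules out ascending bands, so $p$ is a route. In the merely locally gentle case, a repetition in the forward walk produces an ascending band; determinism of the forward and backward rules shows the whole bi-infinite walk through $\te$ is then periodic, so $p$ is an ascending band containing $e$. Either way, $e$ lies on a trail $p$ all of whose oriented edges are ascending.

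The main step is showing $p$ is exceptional, i.e.\ compatible with every trail $q$ (and with itself). Suppose $(p',q')$ is an incompatibility, with $p'=\te_1 s\tf_1$ and $q'=\te_2 s\tf_2$, $\te_1^h<\te_2^h$ and $\tf_1^t<\tf_2^t$, or with the roles of the two trails swapped, and allowing reversal of $p$. Fullness means each class has exactly two elements, so $p'$ and $q'$ use at each end the two members of a class, one high and one low. Along $p$, in its ascending direction, every oriented edge enters at a high half-edge and leaves at a low one. Read in this direction, $p'$ would therefore enter $s$ on the high side and leave on the low side. If instead $p'$ is read in the descending direction, it enters low and leaves high. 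In an incompatibility, however, one path must be low on both ends and the other high on both ends. Hence $p'$ can be neither of the two paths, and no incompatibility involves $p$; the same argument applies when $q=p$, giving self-compatibility.

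The obstacle I expect is bookkeeping at lonely half-edges, that is, fringe ends and the degenerate case where the lazy $s$ sits at a fringe vertex. Incompatibilities require internal endpoints with two comparable half-edges, and in a full chart every internal half-edge is high or low, so lonely half-edges occur only at fringe vertices, where no incompatibility can be anchored. I would also need care when $p$ is a band and $s$ wraps around it, but the local high/low argument is unaffected by wrapping.
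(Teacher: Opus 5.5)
Your proposal is correct and follows the same approach as the paper. The paper's proof builds the same trail through each edge, continuing along ascending edges forward and descending edges backward until reaching fringe edges or closing into a band, and simply asserts that it is exceptional. Your write-up supplies the details the paper leaves implicit: the forward and backward extension rules are mutually inverse, so the walk is a route or a band; and along such a trail every substring enters high and leaves low (or the reverse), so the trail can never be the low--low or high--high member of an incompatibility.
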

\begin{proof}
	If $(G,\sim,\R)$ is locally gentle, then any edge $e$ of $G$ is steep. If $e$ is ascending, then one obtains an exceptional trail $p_e$ by continuing along ascending edges forward and descending edges backwards, until reaching fringe edges or looping back to become a band. This shows that every edge is part of an exceptional trail, hence $(G,\sim,\R)$ is amply framed.
\end{proof}

It was shown in~\cite{WIWT} that a framed DAG $(G,\R)$ with no idle edges is amply framed if and only if
	\begin{enumerate}
		\item $G$ is full (i.e., each internal edge has exactly two incoming and two outgoing edges), and
		\item there is a map $\psi_\mathfrak{F}:E\to\{1,2\}$ such that, if ${e}$ and ${f}$ are edges with the same internal source or target, we have ${e}<_{\mathfrak{F}}{f}$ (in the partial order of $\inn(h({e}))$ or $\out(t({e}))$) if and only if $\psi_\mathfrak{F}({e})<\psi_\mathfrak{F}({f})$.
	\end{enumerate}

Note that this is the same as Definition~\ref{defn:amply-framed-dg} without the condition that $G$ has no edges directly from source to sink (and without the final condition, which is vacuous since $G$ has no directed cycles). In other words, an amply framed DAG is precisely a gentle framed DAG which allows edges directly from source to sink.
Note that an edge of $G$ directly from source to sink simply takes a cone over the triangulated flow polytope, and does not affect the clique triangulation. Hence, the combinatorics of amply framed DAGs is almost entirely captured by the combinatorics of gentle framed DAGs. We expect that, more generally, amply framed turbulence charts are only a slight generalization of gentle framed turbulence charts.

\begin{remk}\label{remk:uqam1}
	The class of ``amply framed directed graphs all of whose cycles are monolabelled'' (see, e.g., Example~\ref{ex:kron2}) was studied in depth in~\cite{UQAM} under the name \emph{cyclic amply framed DAGs}, where they obtained triangulation results, volume results, 
	and gave connections to the g-vector fans of locally gentle algebras (see Remark~\ref{remk:locally-gentle}).
	Through the map of Definition~\ref{defn:chart-from-dg-}, these cyclic amply framed DAGs give examples of amply framed turbulence charts.
\end{remk}

We finally remark that for a framed $(G,\R)$, the reduced nonnegative space $\F_{\geq0,\text{red}}$ being complete translates to the dual graph of the framing triangulation being a simplicial sphere. 
This gives an interpretation for how exactly general framed DAGs generalize amply framed DAGs -- they allow framing triangulations to fail to be simplicial spheres.
It is possible that there may be a similar interpretation for how exactly general turbulence charts generalize amply framed turbulence charts, or even gentle framed turbulence charts.

\section{Gentle Envelopes}
\label{sec:ge}

In this section, we will realize an arbitrary turbulence polyhedron as a face of a gentle turbulence polyhedron in a way which respects the bundle complex. On the level of turbulence charts, this will amount to starting with an arbitrary turbulence chart $(G,\sim,\R)$ and finding a gentle turbulence chart $(G',\sim',\R')$ such that deleting some fringe edges of $(G',\sim',\R')$ and performing some contractions yields $(G,\sim,\R)$. 
We will use this in Section~\ref{sec:pres-subd} to transport presentation and subdivision results about $(G,\sim',\R')$ to $(G,\sim,\R)$.

First, we need to define contraction steps.

\begin{defn}
	Let $(G,\sim,\R)$ be a framed turbulence polyhedron. An edge $\alpha$ of $(G,\sim,\R)$ is \emph{idle} if it connects to two distinct vertices and has a lonely half-edge at an internal vertex (recall Definition~\ref{defn:lonely}).
	Let $\alpha$ be an idle edge of $(G,\sim,\R)$ with half-edges $(\alpha,v_1)$ and $(\alpha,v_2)$, where $(\alpha,v_1)$ is lonely and $v_1\neq v_2$. We define a new framed turbulence chart $(G',\sim',\R')$ as follows:
	\begin{enumerate}
		\item For every vertex $v$ of $G$, there is a vertex $v'$ of $G'$, with the caveat that we identify $v'_1=v'_2$.
		\item For every edge $\beta\neq\alpha$ of $G$ with half-edges $(\beta,w_1)$ and $(\beta,w_2)$, there is an edge $\beta'$ of $G'$ with half-edges $(\beta',w'_1)$ and $(\beta',w'_2)$.
		\item At every vertex $v$ of $G'$ other than $v'_1=v'_2$, the equivalence relation $\sim'_v$ and orders of $\R'$ are inherited from $(G,\sim,\R)$.
		\item Let $f_1<_{\R}\dots<_{\R}f_{m_1}$ be the equivalence class of $\sim_{v_1}$ not containing $(\alpha,v_1)$. Let $g_1<_{\R}<\dots<_{\R}g_{m_2}$ be the equivalence class of $\sim_{v_1}$ containing $(\alpha,v_2)$, and choose $a\in[m_2]$ so that $(\alpha,v_2)$ is $g_a$. Let $h_1<_{\R}\dots<_{\R}h_{m_3}$ be the equivalence class of $\sim_{v_2}$ not containing $(\alpha,v_2)$. Then the equivalence classes of $\sim'_{v'_1}$ and their orders by $\R'$ are $h'_1<_{\R'}\dots<_{\R'}h'_{m_3}$ and 
			\[g'_1<_{\R'}\dots<_{\R'}g'_{a-1}<_{\R'}f'_1<_{\R'}\dots<_{\R'}f'_{m_1}<_{\R'}g'_{a+1}<_{\R'}\dots<_{\R'}g'_{m_2}.\]
	\end{enumerate}
	We say that $(G',\sim',\R')$ is a \emph{contraction of $(G,\sim,\R)$ at $\alpha$.}
\end{defn}

See Figure~\ref{fig:contractchart} for an example of a contraction at an idle edge.

\begin{figure}
	\centering
	\def\svgscale{.21}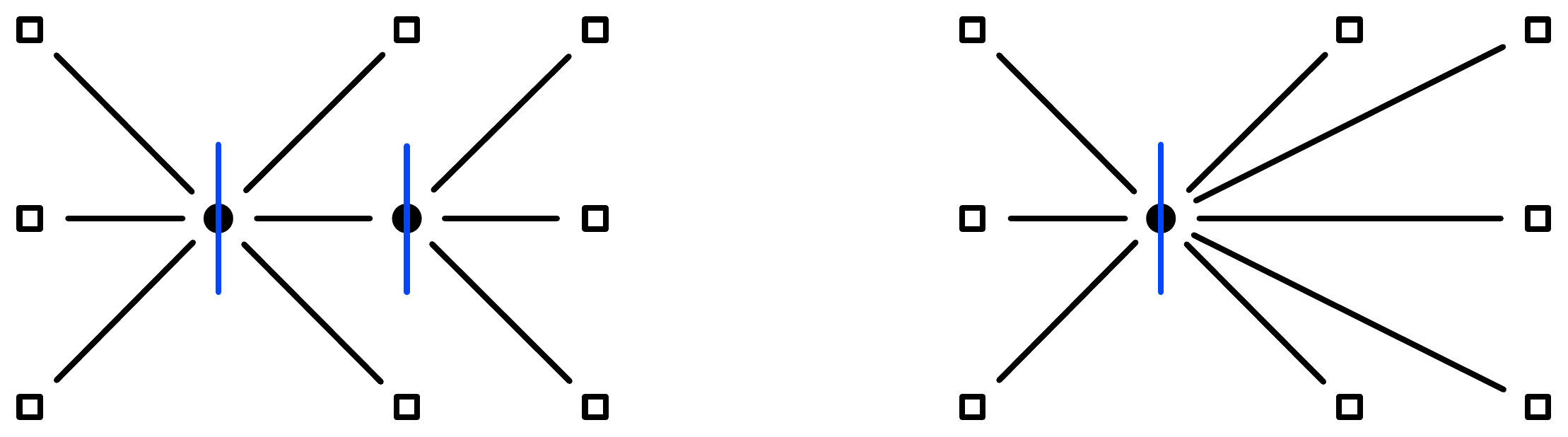
	\caption{Contracting the left framed turbulence chart at $\alpha$ gives the right framed turbulence chart.}
	\label{fig:contractchart}
\end{figure}

\begin{lemma}\label{lem:contractchart}
	Let $(G',\sim',\R')$ be a contraction of $(G,\sim,\R)$ at an idle edge $\alpha$. Then 
	\begin{enumerate}
		\item there is a natural bijection $\phi$ from trails of $(G,\sim,\R)$ to trails of $(G',\sim',\R')$ given by removing all occurrences of (any orientation of) the edge $\alpha$,
		\item the map $\phi$ preserves compatibility: if $S$ is a collection of trails of $(G,\sim,\R)$, then $\phi(S)$ is a bundle if and only if $S$ is a bundle, and
		\item there is a unimodular equivalence from $\F_1(G,\sim)$ to $\F_1(G',\sim')$ which sends $\mathcal I(p)$ to $\mathcal I(\phi(p))$ for any trail $p$ of $(G,\sim,\R)$.
	\end{enumerate}
\end{lemma}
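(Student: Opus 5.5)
The plan is to make everything explicit through the local structure at the lonely endpoint. Write $(\alpha,v_1)$ for the lonely half-edge, so the other class of $\sim_{v_1}$ is $f_1<\dots<f_{m_1}$. Write $(\alpha,v_2)=g_a$ inside the class $g_1<\dots<g_{m_2}$ of $\sim_{v_2}$, with opposite class $h_1<\dots<h_{m_3}$. Here I read the statement's ``class of $\sim_{v_1}$ containing $(\alpha,v_2)$'' as a typo for $\sim_{v_2}$. Two observations drive everything:
\begin{itemize}
\item Any string entering $v_1$ along some $f_i$ must leave along $\alpha$, and conversely, because $\{(\alpha,v_1)\}$ is a whole equivalence class.
\item Every passage of a string through $\alpha$ therefore has the local form $\tilde h_j\,\tilde\alpha\,\tilde f_i$, up to inversion, where $h_j$ is the half-edge through which the string crosses at $v_2$.
\end{itemize}
I will treat $v_2$ as internal. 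The degenerate case where $v_2$ is fringe (so $\alpha$ is a fringe edge and the $f$'s become fringe-like) should be handled by the same bookkeeping, or excluded, and I would check which convention the definition intends.

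For (1), define $\phi$ by deleting every occurrence of $\tilde\alpha^{\pm1}$. Each local pattern $\tilde h_j\tilde\alpha\tilde f_i$ becomes $\tilde h_j'\tilde f_i'$ at $v_1'=v_2'$. This is a legal crossing in $(G',\sim')$, since $h'_j$ and $f'_i$ lie in different classes there. Crossings at $v_2$ that avoid $\alpha$ use $h$'s and $g$'s, and remain legal because those classes are inherited. The inverse map reinserts $\tilde\alpha$ between any consecutive $h'_j$ and $f'_i$ at the merged vertex. Both maps send routes to routes (fringe endpoints are untouched) and bands to bands: $B^2$ being a string is a local condition, and primitivity is preserved because the operation is injective on cyclic words. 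So $\phi$ is a bijection respecting equivalence of trails.

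For (2), I would show that $(p',q')$ is an incompatibility in $G$ if and only if the corresponding substrings form one in $G'$. First consider an incompatibility $(\te_1 s\tf_1,\te_2 s\tf_2)$ in $G$ and look at where the departures happen.
\begin{itemize}
\item No departure can occur at $v_1$ on the $\alpha$-side, since that class is a singleton. If the shared substring $s$ ends at $v_1$ entering along $\alpha$, the departures are two $f$'s whose order is unchanged in $\R'$.
\item If $s$ ends at $v_2$ with departures $g_a=\alpha$ versus some $g_b$, extend the first trail through $\alpha$ to some $f_i$. In $G'$ the comparison becomes $f'_i$ versus $g'_b$. The $f$-block was inserted exactly in slot $a$, so $f'_i<g'_b$ if and only if $g_a<g_b$.
\item The case of departures $g_b$ versus $g_c$, neither equal to $\alpha$, is unchanged.
\end{itemize}
The reverse direction reads the same case list backwards. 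The main obstacle is this bookkeeping, together with one more check: deleting $\alpha$ must not merge two previously different shared substrings into one, nor create a new lazy shared substring at the merged vertex whose departures were not comparable before. I would settle this by observing that any shared substring in $G'$ through the merged vertex that uses an $f'$ lifts uniquely to a shared substring through $\alpha$ in $G$.

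For (3), let $\pi$ be the coordinate projection forgetting $F(\alpha)$. Conservation at $v_1$ gives $F(\alpha)=\sum_i F(f_i)$. Substituting this into conservation at $v_2$ yields exactly conservation at the merged vertex, $\sum_j F(h_j)=\sum_{b\neq a}F(g_b)+\sum_i F(f_i)$. All other conservation equations, the nonnegativity conditions, and the strength (because $\alpha$ is internal) are unchanged. The inverse map reconstructs $F(\alpha)$ as the integral linear form $\sum_i F(f_i)$, so $\pi$ is a lattice-preserving affine bijection $\F_1(G,\sim)\to\F_1(G',\sim')$, i.e. a unimodular equivalence. Finally, $\pi(\I(p))=\I(\phi(p))$ holds by construction, since $\phi$ only deletes occurrences of $\alpha$.
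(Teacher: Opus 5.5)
Your proposal is correct and follows the same route as the paper, which simply declares (1) and (2) immediate and uses the coordinate projection forgetting $F(\alpha)$ as the unimodular equivalence in (3). Your local analysis at $v_1$ and $v_2$ supplies the details the paper leaves implicit. In particular, you check that inserting the $f$-block in slot $a$ makes $f'_i<_{\R'}g'_b$ equivalent to $g_a<_{\R}g_b$, and you give the integral inverse $F(\alpha)=\sum_i F(f_i)$.
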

\begin{proof}
	(1) and (2) are immediate. The unimodular equivalence of (3) is the projection given by forgetting the coordinate of $\mathbb R^E$ associated to $\alpha$.
\end{proof}

We now describe a process by which one may start with a general turbulence chart $(G,\sim,\R)$ and perform several kinds of moves to pull it closer to being gentle. We begin with the following move which pulls $(G,\sim,R)$ closer to being sub-full.

\begin{defn}\label{move:decont}
	Let $(G,\sim,\R)$ be a framed turbulence chart and suppose $v$ is an internal vertex of $G$ such that one of the equivalence classes of $\sim_v$ has cardinality greater than two. Notate this equivalence class as $h_1<_{\R} h_2<_{\R}\dots<_{\R}h_m$ for $m>2$.
	Choose $a\in\{1,\dots,m-2\}$.
	We define a new framed turbulence chart $(G',\sim',\R')$ as follows:
	\begin{itemize}
		\item For any vertex $w$ of $G$ there is a corresponding vertex $w'$ of $G'$. In addition, there is a vertex $v''$ of $G'$.
		\item For any edge $e$ of $G$, there is a corresponding edge $e'$ of $G'$. Say $(e,v_1)$ and $(e,v_2)$ are the two distinct half-edges of $e$. Then the half-edges of $e'$ are $(e',u_1)$ and $(e',u_2)$, where for $i\in\{1,2\}$ we have
			\[
				u_i=\begin{cases}v'_i & v_i\neq v \\
					v' & (e,v_i)=h_j\text{ for }j\leq a\\
					v'' & (e,v_i)=h_j\text{ for }j>a.
				\end{cases}
			\]
		\item There is an additional \emph{decontraction edge} $\delta$ from $v'$ to $v''$.
		\item The equivalence classes of $\sim'$ and the framing $\R'$ at vertices other than $v'$ and $v''$ are inherited from $(G,\sim,\R)$.
		\item At vertex $v'$, the half-edge $(\delta,v')$ is added to the equivalence class $\{h'_1,\dots,h'_a\}$ and the framing $\R'$ orders $h'_1<_{\R'}\dots<_{\R'}h'_a<_{\R'}(\delta,v')$.
		\item At vertex $v''$, the half-edge $(\delta,v'')$ is its own equivalence class in $\sim'_{v''}$ and the other equivalence class is $h'_{a+1}<_{\R'}\dots<_{\R'}h'_{m}$.
	\end{itemize}
	We say that $(G',\sim',\R')$ is a \emph{degree-reduction of $(G,\sim,\R)$ at $v$}.
\end{defn}

See Figure~\ref{fig:moveone} for an example of a degree-reduction with $a=1$.
\begin{figure}
	\centering
	\def\svgscale{.21}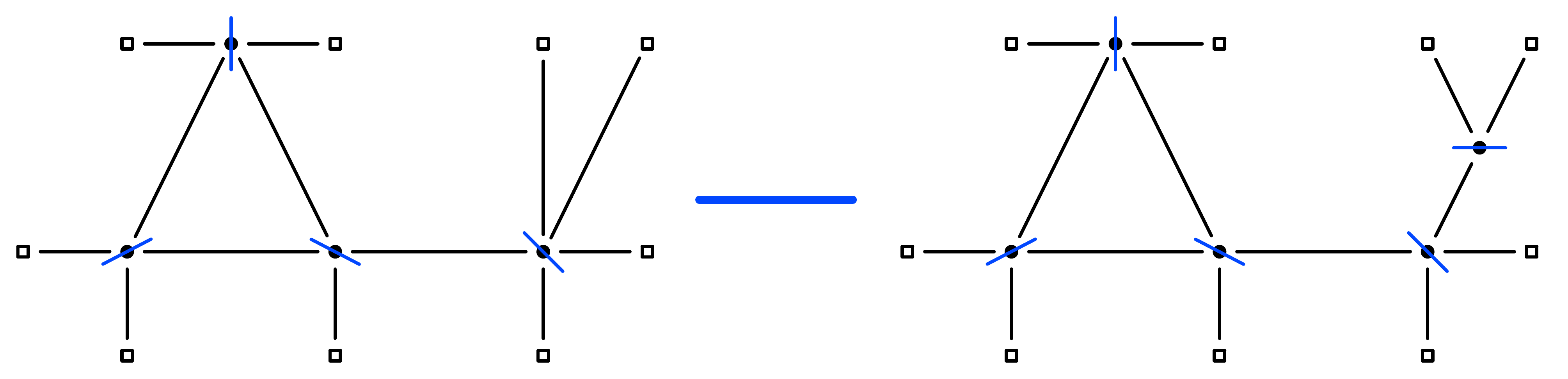
	\caption{Performing a degree-reduction at $v$ with $a=1$.}
	\label{fig:moveone}
\end{figure}

The following remark along with Lemma~\ref{lem:contractchart} shows that degree-reduction moves preserve the turbulence polyhedron and bundle complex.
\begin{remk}\label{remk:undocont}
	If $(G',\sim',\R')$ is a degree-reduction of $(G,\sim,\R)$, then the decontraction edge $\delta$ is idle and we may contract $(G',\sim',\R')$ at $\delta$ to recover the original framed turbulence polyhedron $(G,\sim,\R)$.
\end{remk}
We may take a general framed turbulence chart $(G,\sim,\R)$ and repeatedly apply degree-reduction moves until we reach a framed turbulence chart $(G_1,\sim_1,\R_1)$ which is sub-full. 
Our next move will allow us to make all edges of $(G,\sim,\R)$ steep.

\begin{defn}\label{move:steepening}
	Let $(G,\sim,\R)$ be a sub-full framed turbulence chart. Let $e$ be an edge which is between two fringe vertices (hence is steep), or which is between two internal vertices and is not steep (i.e., neither edge of $e$ is lonely, and either both half-edges are high or both half-edges are low).
	We define a new framed turbulence chart $(G',\sim',\R')$ as follows:
	\begin{itemize}
		\item For any vertex $w$ of $G$ there is a corresponding vertex $w'$ of $G'$. In addition, there is a vertex $v_e'$ of $G'$.
		\item For any edge $f\neq e$ of $G$, there is a corresponding edge $f'$ of $G'$ with the same endpoints.
		\item Replacing the edge $e$ of $G$ with half-edges $(e,v_1)$ and $(e,v_2)$ are the \emph{decontraction edges} $e'_1$ between $v'_1$ and $v'_e$, and $e'_2$ between $v'_2$ and $v'_{e}$.
		\item The equivalence classes of $\sim'$ at the vertices other than $v'_e$ and their orders by $\R'$ are inherited as expected from $(G,\sim,\R)$. The new vertex $v'_e$ is incident to only two half-edges, each of which forms its own equivalence class in $\sim'_{v'_e}$.
	\end{itemize}
	We say that $(G',\sim',\R')$ is a \emph{steepening of $(G,\sim,\R)$ at $e$}.
\end{defn}

See Figure~\ref{fig:movetwo} for an example of a steepening move at an internal edge.
\begin{figure}
	\centering
	\def\svgscale{.21}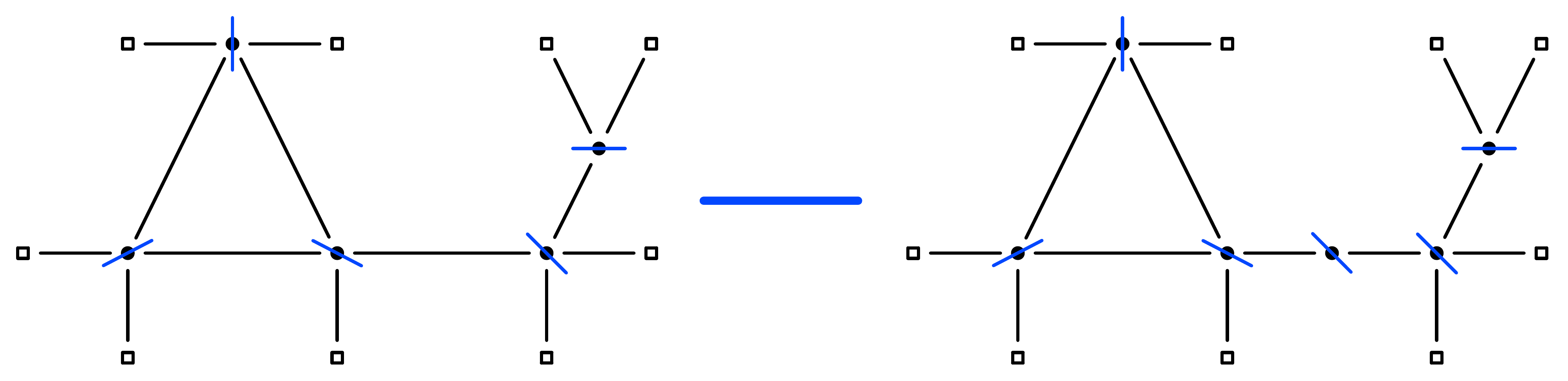
	\caption{Performing a steepening move at $e$.}
	\label{fig:movetwo}
\end{figure}

\begin{remk}\label{remk:undosteep}
	If $(G',\sim',\R')$ is a steepening of $(G,\sim,\R)$, then the decontraction edges are idle and we may contract $(G',\sim',\R')$ at either decontraction edge to recover $(G,\sim,\R)$.
\end{remk}

Note that after a steepening move at an internal edge $e$, the non-steep edge $e$ has been replaced with two steep edges $e'_1$ and $e'_2$. Then the number of non-steep edges decreases upon application of a steepening move. After a steepening move at an edge $e$ between two fringe vertices, the edge $e$ has been replaced with two steep edges $e'_1$ and $e'_2$ which do not lie between two fringe vertices.
Hence, by using degree-reduction moves and steepening moves we can reduce to the case where our turbulence chart is sub-full, has only steep edges, and has no edges between two fringe vertices. We now add extra edges to realize our turbulence chart inside of a full turbulence chart.

\begin{defn}
	Let $(G,\sim,\R)$ be a sub-full framed turbulence chart such that every edge is steep. If $(G,\sim)$ is not full, then let $v$ be an internal vertex of $G$ such that one of the equivalence classes of $\sim_v$ consists of a single half-edge $h_1$. Let $f$ be the edge containing $h_1$ and let $h_2$ be the other half-edge of $f$. We define a framed turbulence chart $(G',\sim',\R')$ as follows:
	\begin{enumerate}
		\item For every vertex $v\in G$, there is a vertex $v'\in G'$.
		\item For every edge $e\in G$, there is an edge $e'\in G'$ with the same start and end points.
		\item The equivalence relations $\sim'_{v'}$ and the orders of $\R'$ are inherited as expected from $\sim$ and $\R$ with respect to these edges.
		\item There is one additional fringed vertex $v''$ of $G'$ and one additional edge $e'_{v''}$ from $v''$ to $v'$. Say $h'_{v'}$ is its half-edge at $v'$; then $h'_{v'}$ and $h'_1$ are equivalent under $\sim'_{v'}$.
		\item If $h_2$ (equivalently, $h'_2$) is high, then the framing $\R'$ orders $h'_1<_{\R'}h'_{v}$. Otherwise, $h'_2$ is low and the framing $\R'$ orders $h'_1>_{\R'}h'_v$.
			Note that the edge $f'$ is steep in either case.
	\end{enumerate}
	We say that $(G',\sim',\R')$ is obtained from $(G,\sim,\R)$ by a \emph{filling step} at $v$.
\end{defn}

Note that if $(G',\sim',\R')$ is obtained from $(G,\sim,\R)$ by a filling step, then $(G',\sim',\R')$ is still a sub-full framed turbulence chart such that every edge is steep. Hence, we can continue applying filling steps until we reach a \emph{full} framed turbulence chart $(G'',\sim'',\R'')$ with only steep edges, which we call a \emph{filling} of $(G,\sim,\R)$. 
Note that if $W$ is the set of fringe edges added during filling steps, then removing all edges of $W$ recovers $(G,\sim,\R)$.
See Figure~\ref{fig:movethree} for an example of a filling, where the new fringe edges $W$ are drawn in purple.
\begin{figure}
	\centering
	\def\svgscale{.21}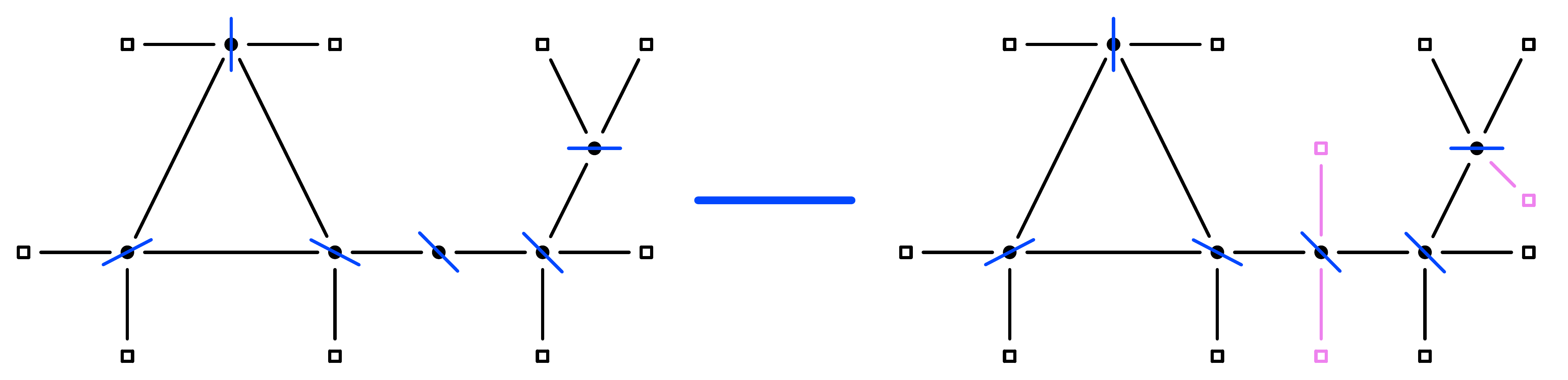
	\caption{Filling a framed turbulence chart.}
	\label{fig:movethree}
\end{figure}

We remark that if there is an oriented edge $\te$ of $(G,\sim,\R)$ with two lonely half-edges, then there will be multiple fillings of $(G,\sim,\R)$; some which make $\te$ ascending, and some which make $\te$ descending.

As Figure~\ref{fig:movethree} shows, the turbulence chart $(G'',\sim'',\R'')$ resulting from a filling may still fail to be gentle.
This is because we may have problematic bands as forbidden in Definition~\ref{defn:gentchart}~\eqref{gc3}. We correct such bands with a final reduction move.

\begin{defn}
	Let $(G,\sim,\R)$ be a framed turbulence chart such that $G$ is full and every edge is steep. A band $B=\te_1\te_2\dots \te_{m}$ is \emph{steep} if every $\te_j$ is ascending, or every $\te_j$ is descending.
	Suppose there exists a steep band $B=\te_1\te_2\dots\te_{m}$ of $(G,\sim,\R)$; if necessary, replace $B$ with $B^{-1}$ so that every edge of $B$ is ascending.
	Define a new turbulence chart $(G',\sim',\R')$ from $(G,\sim,\R)$ by adding two new (internal) vertices $v'_1$ and $v'_2$, and replacing the edge $e_1$ with
	\begin{enumerate}
		\item an edge $f_1$ between $t(\te_1)$ and $v'_1$,
		\item an edge $f_2$ between $v'_1$ and $v'_2$, and
		\item an edge $f_3$ between $v'_2$ and $h(\te_1)$.
	\end{enumerate}
	Say $\tf_1$ is the orientation of $f_1$ from $t(\te_1)$ to $v'_1$, $\tf_2$ is the orientation of $f_2$ from $v'_1$ to $v'_2$, and $\tf_3$ is the orientation of $f_3$ from $v'_2$ to $h(\te_1)$.
	The two half-edges incident to $v'_1$ (resp. $v'_2$) are each in their own equivalence class of $\sim'_{v'_1}$ (resp. $\sim'_{v'_2}$).
	This results in a new framed turbulence chart $(G',\sim',\R')$, but it is not full because $v'_1$ and $v'_2$ each have a degree of two. Let $(G'',\sim'',\R'')$ be a filling of $(G'',\sim',\R')$ such that $\tf_2$ is descending (note that $\tf_1$ and $\tf_3$ are forced to be ascending).
	We say that $(G'',\sim'',\R'')$ is a \emph{band-correction} of $(G,\sim,\R)$ with respect to the band $B$. Note that the steep band $B$ has been replaced with the band $\tf_1\tf_2\tf_3\te_2\te_3\dots\te_m$, which is not steep because $\tf_2$ is descending.
\end{defn}

See Figure~\ref{fig:movefour} for an example of a band-correction move.
Given a full framed turbulence chart such that every edge is steep, we may apply band-correction moves until no band is steep. 
\begin{remk}\label{remk:undocycle}
	We may undo a band-correction move by deleting the edges added during the filling step, then contracting along any two of the edges $\{f_1,f_2,f_3\}$.
\end{remk}

\begin{figure}
	\centering
	\def\svgscale{.21}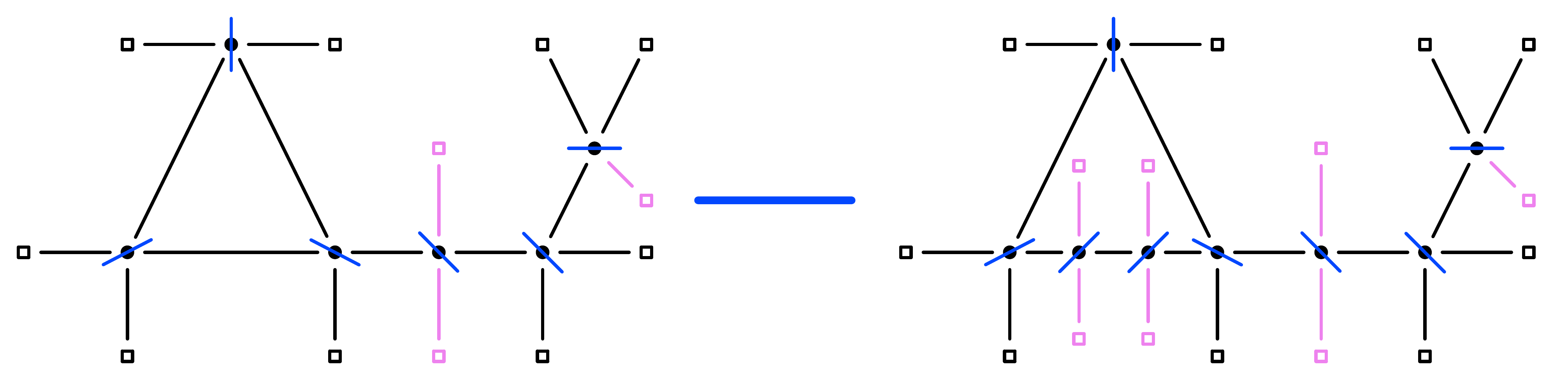
	\caption{Performing a band-correction move.}
	\label{fig:movefour}
\end{figure}

We summarize the process of this subsection with a theorem.
\begin{thm}\label{prop:face-of-gentle}
	Let $(G,\sim,\R)$ be a turbulence chart. There exists a gentle framed turbulence chart $(G',\sim',\R')$ equipped with a set of edges $W$ of $G'$ such that $(G,\sim,\R)$ is obtained by deleting the edges $W$ from $(G',\sim',\R')$ and performing a number of contraction steps.
\end{thm}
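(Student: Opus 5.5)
We build $(G',\sim',\R')$ from $(G,\sim,\R)$ by running the four moves of this section in order: degree-reductions, then steepenings, then a filling, then band-corrections. Alongside, we track a set $W$ of added fringe edges and a list of idle edges to contract. The invariant to maintain is that, at every stage, the current chart $(G_i,\sim_i,\R_i)$ comes with a set $W_i$ of edges such that deleting $W_i$ and then contracting the recorded idle edges returns $(G,\sim,\R)$. Initially $W_0=\emptyset$ and there are no contractions.

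\textbf{Stage 1 (degree-reduction).} We apply degree-reduction moves at any vertex with an equivalence class of size greater than two. Each move splits such a class of size $m$ into classes of sizes $a+1$ and $m-a$, where $a\le m-2$, both smaller than $m$. A potential function such as $\sum_v\sum_{\text{classes}}\max(|C|-2,0)$ therefore strictly decreases, so this stage terminates with a sub-full chart. By Remark~\ref{remk:undocont}, each new decontraction edge $\delta$ is idle and contracting it undoes the move, so we append $\delta$ to the contraction list.

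\textbf{Stage 2 (steepening).} We apply steepening moves at non-steep internal edges and at edges joining two fringe vertices. As noted after Definition~\ref{move:steepening}, the number of such bad edges strictly decreases, and the moves keep the chart sub-full. By Remark~\ref{remk:undosteep}, one of the two decontraction edges is idle, and contracting it undoes the move. We add it to the list.

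One point needs care: the undo contractions must be performed in reverse order of the moves. A later move may subdivide or reattach an edge that an earlier contraction refers to. Reversing the order means each contraction is applied to exactly the chart produced by the corresponding move.

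\textbf{Stage 3 (filling).} We fill to a full chart in which every edge is steep. This is possible because each filling step preserves sub-fullness and steepness and strictly reduces the number of singleton classes. We put all new fringe edges into $W$. Deleting them recovers the pre-filling chart exactly, since the orders of $\R$ restrict correctly. No new edge joins two fringe vertices, because each is attached to an internal vertex.

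\textbf{Stage 4 (band-correction).} As long as a steep band exists, we apply a band-correction. The edges added in the internal filling go into $W$, and the two of $\{f_1,f_2,f_3\}$ given by Remark~\ref{remk:undocycle} go onto the contraction list. Because deletion of $W$ must come before all contractions, we have to check that deleting these new fringe edges commutes with the earlier contractions. It does, since contractions only touch idle non-$W$ edges and the fringe edges of $W$ are disjoint from them.

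\textbf{Main obstacle: termination of Stage 4.} A band-correction might create new steep bands, because new internal vertices are introduced and bands can be arbitrarily long. I would first argue that a band-correction introduces no new steep band. Any band through $f_2$ uses $\tf_2^{\pm1}$, which is descending in one direction and ascending in the other. At $v'_1$ and $v'_2$, the band must pass between $f_1$ and $f_2$ or between $f_2$ and $f_3$, since the only other option is exiting through a fringe edge, and the filling edges are fringe. So every band through the new vertices contains $\tf_1\tf_2\tf_3$ or its inverse, which mixes ascending and descending edges.

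Bands avoiding $f_1,f_2,f_3$ were already bands before the move and are unchanged. Hence the set of steep bands strictly shrinks: $B$ is destroyed and no new steep band is created. It remains to see that there are only finitely many steep bands to begin with. In a full steep chart, each ascending oriented edge has a unique ascending continuation. So the ascending bands correspond to the cycles of a functional graph on the finite set of oriented edges, and there are finitely many of them.

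Once no steep band remains, conditions \eqref{gc1}--\eqref{gc3} and \eqref{gc1.5} hold, so the resulting chart is gentle. We take $W$ to be the union of all added fringe edges.
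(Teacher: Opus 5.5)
Your proposal is correct and follows the same route as the paper's proof: degree-reductions, then steepenings, then filling, then band-corrections, with $W$ the set of added fringe edges and all other moves undone by contractions as in Remarks~\ref{remk:undocont}, \ref{remk:undosteep}, and~\ref{remk:undocycle}. You also supply details the paper leaves implicit, and your arguments for them are sound: termination of the band-correction stage (counting steep bands via cycles of the ascending-continuation map), performing the contractions in reverse order, and commuting the deletion of $W$ ahead of the band-correction contractions.
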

\begin{proof}
	Start with the turbulence chart $(G,\sim,\R)$ and repeatedly apply degree-reduction moves to obtain a sub-full framed turbulence chart $(G_1,\sim_1,\R_1)$. Then apply steepening moves to obtain a sub-full turbulence chart $(G_2,\sim_2,\R_2)$ all of whose edges are steep and which has no edges between two fringe vertices. Then apply filling steps to get a full turbulence chart $(G_3,\sim_3,\R_3)$ with only steep edges and no edges between two fringe vertices. Finally, apply band correction moves to reach a gentle turbulence chart $(G_4,\sim_4,\R_4)$.

	Let $W$ be the set of all fringe edges adding during filling steps (including those fringe edges added during band-correction steps).
	Remarks~\ref{remk:undocont},~\ref{remk:undosteep}, and~\ref{remk:undocycle} show that degree-reduction, steepening, and band correction moves may be undone by contractions after removing all edges of $W$. This ends the proof.
\end{proof}

We call $(G',\sim',\R',W)$ as in Theorem~\ref{prop:face-of-gentle} a \emph{gentle envelope} of $(G,\sim,\R)$.

Figures~\ref{fig:moveone},~\ref{fig:movetwo},~\ref{fig:movethree}, and~\ref{fig:movefour} give an example of the full process of applying degree-reduction, steepening, band correction, and filling moves to a general framed turbulence chart to reach a gentle framed turbulence chart with such an edge set $W$. For good measure, Figure~\ref{fig:movefive} shows the resulting fringed algebra.

\begin{figure}
	\centering
	\def\svgscale{.21}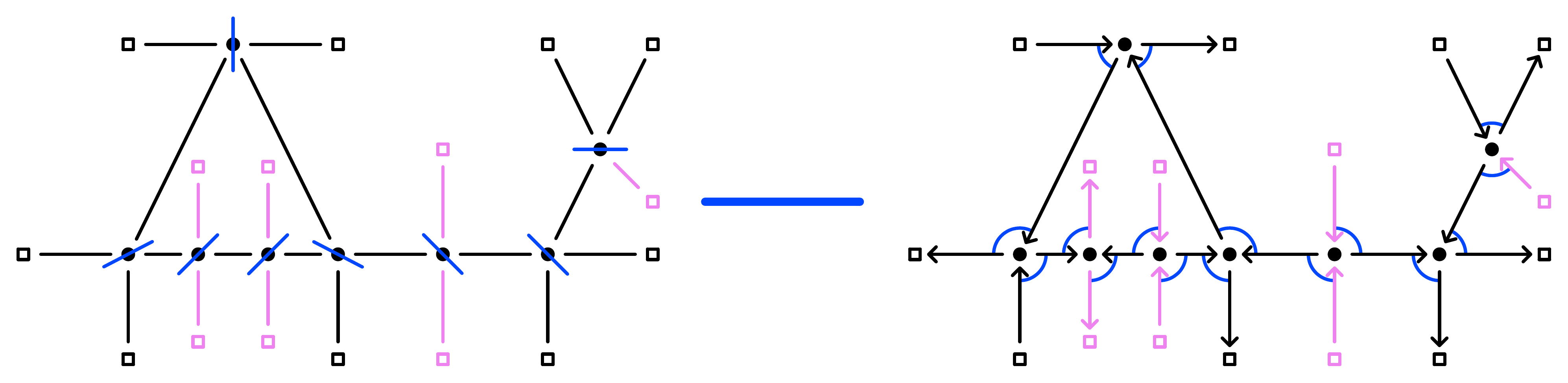
	\caption{Obtaining the gentle algebra of the result from Figure~\ref{fig:movefour}.}
	\label{fig:movefive}
\end{figure}

An immediate corollary of Theorem~\ref{prop:face-of-gentle} is the following.

\begin{cor}\label{cor:face-of-gentle}
	Up to unimodular equivalence, a turbulence polyhedron is precisely a face of a gentle turbulence polyhedron.
\end{cor}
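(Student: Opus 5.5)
There are two inclusions to establish: every turbulence polyhedron is unimodularly equivalent to a face of a gentle turbulence polyhedron, and conversely every face of a gentle turbulence polyhedron is unimodularly equivalent to a turbulence polyhedron.

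For the first inclusion, fix $(G,\sim)$ and choose any framing $\R$; a framing always exists, for instance a clockwise one. Theorem~\ref{prop:face-of-gentle} then gives a gentle envelope $(G',\sim',\R',W)$.

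Via Proposition~\ref{prop:gentandchart}, the gentle chart $(G',\sim',\R')$ corresponds to a fringed algebra $\tL$ with $\F_1(G',\sim')=\F_1(\tL)$. Lemma~\ref{lem:gentface} shows that $Q_W=\{F\in\F_1(G',\sim') : F|_W=0\}$ is a face.

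Next I identify $Q_W$ with the turbulence polyhedron of the chart obtained by deleting $W$, call it $(G'\setminus W,\sim'')$. Each edge of $W$ is a fringe edge added by a filling step, so its removal leaves every equivalence class at each internal vertex nonempty. The conservation equations on $Q_W$ are therefore exactly those of the deleted chart. Moreover, edges of $W$ contribute nothing to strength when they vanish, so the unit condition is also the same. Projecting away the zero coordinates indexed by $W$ is therefore a lattice isomorphism $Q_W\cong\F_1(G'\setminus W,\sim'')$.

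One point needs care. Deleting a fringe edge could in principle make an internal vertex fail to split into two nonempty classes. This does not happen, because a filling edge always joins an existing nonempty class, namely the one containing $h_1$. Finally, Lemma~\ref{lem:contractchart}(3) applied once per contraction step gives a unimodular equivalence from $\F_1(G'\setminus W,\sim'')$ to $\F_1(G,\sim)$. Composing these maps gives the first inclusion.

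For the converse, let $Q$ be a face of a gentle turbulence polyhedron $\F_1(G',\sim')$. By Lemma~\ref{lem:gentface}, $Q=Q_W$ for some edge set $W$. I would take the chart obtained by deleting $W$ from $G'$ and then repair any internal vertex at which an equivalence class has become empty.

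This repair is the main obstacle. If all half-edges of one class at $v$ lie in $W$, then conservation at $v$ forces every remaining edge at $v$ to carry zero flow as well. So I would iteratively enlarge $W$ to include all such forced-zero edges; this does not change $Q_W$. Once the process stabilises, each surviving internal vertex has either both classes nonempty, or no surviving edges at all, in which case I delete the vertex. Surviving vertices of degree one are reclassified as fringe.

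This reclassification must leave the flow constraints unchanged. A vertex whose surviving degree is one with both classes nonempty is impossible, since that would require two half-edges. A vertex left with exactly one surviving edge has that edge forced to zero by conservation, so it too is absorbed into $W$ during the iteration. Hence the final deleted chart is a genuine turbulence chart whose turbulence polyhedron equals $Q_W$ after dropping the zero coordinates. The same strength normalisation as before applies, since the fringe edges of $G'$ that survive are exactly those contributing to strength.
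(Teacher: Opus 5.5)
Your proof is correct and follows the paper's own argument: Theorem~\ref{prop:face-of-gentle} together with Lemmas~\ref{lem:gentface} and~\ref{lem:contractchart} for the forward direction, and Lemma~\ref{lem:gentface} plus edge deletion for the converse. Your iterative enlargement of $W$, which ensures no surviving internal vertex is left with an empty $\sim$-class, is a worthwhile addition: the paper's converse simply asserts that deleting $W$ yields a turbulence chart and never addresses this case.
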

\begin{proof}
	Since contractions do not change the turbulence polyhedron by Lemma~\ref{lem:contractchart}, Theorem~\ref{prop:face-of-gentle} shows that up to unimodular equivalence any turbulence polyhedron is given by taking the turbulence polyhedron of a gentle envelope $(G',\sim',\R')$ and setting equal to zero the flow through a set of edges $W$ of $G'$ in $\F_1(G',\sim',\R')$. This gives a face of $\F_1(G',\sim',\R')$ by Lemma~\ref{lem:gentface}.
	On the other hand, any face of a gentle turbulence polyhedron
	is given by zeroing out the flow through some edge set $W$ of the underlying gentle algebra by Lemma~\ref{lem:gentface}. Deleting these edges from the corresponding turbulence chart retrieves a turbulence chart whose turbulence polyhedron is the desired face.
\end{proof}

\begin{remk}\label{remk:wilt}
	The construction of a gentle envelope is ultimately a generalization of the \emph{ample envelopes} defined by the author and Khrystyna Serhiyenko~\cite{WILT} in the setting of framed DAGs.
	While the present article is concerned with triangulations and subdivisions, the work~\cite{WILT} is concerned with a lattice structure on the maximal simplices of a framing triangulation: starting from a framed DAG $(G,\R)$ under certain \emph{rooted} conditions, the lattice structure defined on the maximal cliques of an ample envelope of $(G,\R)$ (which was proven in the gentle case in~\cite{WIWT}) was shown to descend to a lattice structure on the maximal cliques of the original framed DAG $(G,\R)$.
\end{remk}

\section{Presentations and Subdivisions of Turbulence Polyhedra}
\label{sec:pres-subd}

In this section, we use the construction of gentle envelopes developed in the previous section to transport presentation and subdivision results from turbulence polyhedra of fringed algebras to turbulence polyhedra of arbitrary turbulence charts.

\subsection{Presentations of turbulence polyhedra}

We show that we get vertices of the turbulence polyhedron of $(G,\sim,\R)$ from indicator vectors of ``elementary'' routes, and its extremal rays from indicator vectors of ``elementary'' bands.

\begin{defn}\label{defn:elroute}
	Let $(G,\sim)$ be a turbulence chart.
	We say that a route $p$ of $(G,\sim,\R)$ is \emph{simple} if it does not use the same vertex twice.
			We say that $p$ is a \emph{lollipop} if it is of the form $s\sigma s^{-1}$ for some strings $s$ and $\sigma$ such that every vertex of $s\sigma s^{-1}$ appears exactly twice if it is a vertex of $s$, and once otherwise.
	A self-compatible route is \emph{elementary} if it is simple or it is a lollipop.
\end{defn}

\begin{defn}\label{defn:elband}
	Let $(G,\sim,\R)$ be a framed turbulence chart.
	We say that a band $B$ of $(G,\sim,\R)$ is \emph{simple} if it does not use the same vertex twice. We say that $B$ is a \emph{barbell} if up to cyclic equivalence it is of the form $s\sigma_1 s^{-1}\sigma_2$ for some strings $s$, $\sigma_1$, and $\sigma_2$ such that every vertex used appears exactly twice if it is a vertex of $s$, and once otherwise.
	A band is \emph{elementary} if it is simple or it is a barbell.
\end{defn}

See Figure~\ref{fig:ELEMENTARY} for examples of elementary and nonelementary trails.

\begin{figure}
\centering
	\begin{minipage}[b]{.15\textwidth}
  \centering
	\def\svgscale{.21}\input{1a.pdf_tex}

	simple
\end{minipage}
	\begin{minipage}[b]{.15\textwidth}
  \centering
	\def\svgscale{.21}\input{2a.pdf_tex}

	lollipop
\end{minipage}
	\begin{minipage}[b]{.15\textwidth}
  \centering
	\def\svgscale{.21}\input{3a.pdf_tex}

	nonelementary
\end{minipage}
	\begin{minipage}[b]{.15\textwidth}
  \centering
	\def\svgscale{.21}\input{4a.pdf_tex}

	simple
\end{minipage}
	\begin{minipage}[b]{.15\textwidth}
  \centering
	\def\svgscale{.21}\input{5a.pdf_tex}

	barbell
\end{minipage}
	\begin{minipage}[b]{.15\textwidth}
  \centering
	\def\svgscale{.21}\input{6a.pdf_tex}

	nonelementary
\end{minipage}
	\caption{Examples of elementary (blue) and nonelementary (red) routes and bands.}
	\label{fig:ELEMENTARY}
\end{figure}

\begin{remk}\label{remk:band-2to1}
	Note that if $B:=s\sigma_1 s^{-1}\sigma_2$ is a barbell, then $B':=s\sigma_1s^{-1}\sigma_2^{-1}$ is a different barbell with the same indicator vector. Upon choosing a framing $\R$ precisely one of $B$ and $B'$ will be self-compatible.
\end{remk}

\begin{defn}
	If $(G,\sim,\R)$ is a framed turbulence chart and $W$ is a set of edges of $G$, then a trail of $(G,\sim,\R)$ is \emph{$W$-avoiding} if it uses no edges of $W$. A clique of $(G,\sim,\R)$ is \emph{$W$-avoiding} if it consists only of $W$-avoiding trails. A flow $F$ on $(G,\sim,\R)$ is \emph{$W$-avoiding} if $F(e)=0$ for all $e\in W$. The set of $W$-avoiding nonnegative flows make up the face $Q_W$ of $\F_1(G,\sim)$ as in Definition~\ref{defn:qw}.
\end{defn}

\begin{lemma}\label{lem:elemdelete}
	Let $(G,\sim,\R)$ be a framed turbulence chart and let $W$ be a set of edges of $G$. Let $(G',\sim',\R')$ be the framed turbulence chart given by deleting edges of $W$ from $(G,\sim,\R)$. Then the $W$-avoiding elementary trails of $(G,\sim,\R)$ are precisely the elementary trails of $(G',\sim',\R')$.
\end{lemma}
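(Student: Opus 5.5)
The proof will be a direct comparison of the two charts. The basic fact is that strings of $(G',\sim',\R')$ are exactly the $W$-avoiding strings of $(G,\sim,\R)$. Every notion in Definitions~\ref{defn:elroute} and~\ref{defn:elband} is defined from strings, from the vertex sequences of walks, and from the framing orders. So it should be enough to check that each ingredient is preserved when $W$ is deleted.

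\emph{Step 1: strings.} I take the deletion $(G',\sim',\R')$ to carry the restricted data. At each internal vertex $v$, $\sim'_v$ is $\sim_v$ restricted to the surviving half-edges, and the orders of $\R'$ are the restrictions of those of $\R$. Let $\te_1\cdots\te_m$ be a sequence of oriented edges of $G'$, that is, a $W$-avoiding sequence in $G$. The condition that $e_j^h$ and $e_{j+1}^t$ lie in different classes means the same thing for $\sim_v$ and for $\sim'_v$. Hence strings of $G'$ are precisely the $W$-avoiding strings of $G$. It follows at once that the bands of $G'$ are precisely the $W$-avoiding bands of $G$: the condition that $B^2$ is a string and $B$ is primitive is intrinsic to the walk. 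Equivalence of trails and substrings also transfer unchanged.

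\emph{Step 2: routes and fringe vertices.} A $W$-avoiding route of $G$ ends at fringe vertices of $G$. Each such vertex is still incident to its unique edge, which is not in $W$, so it is fringe in $G'$. Conversely, a route of $G'$ ends at a fringe vertex $u$ of $G'$, and I must check that $u$ was fringe in $G$. This is where I expect the only real obstacle. If deleting $W$ reduced an internal vertex of $G$ to degree one, one class of $\sim_u$ would have become empty, so $(G',\sim')$ would not be a turbulence chart as defined. I will therefore use the standing convention that deleting $W$ yields a genuine turbulence chart, meaning no class of any $\sim_v$ is emptied. In the use we have in mind, $W$ is the set of fringe edges added by filling steps in Theorem~\ref{prop:face-of-gentle}, and there this is automatic. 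Under this convention every internal vertex of $G$ stays internal in $G'$, so the fringe vertices of $G'$ that are endpoints of routes are exactly fringe vertices of $G$.

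\emph{Step 3: shape and compatibility.} Being simple, a lollipop $s\sigma s^{-1}$, or a barbell $s\sigma_1s^{-1}\sigma_2$ depends only on the walk and its vertex repetitions, and Step~1 shows the decompositions into strings $s,\sigma,\sigma_i$ agree in $G$ and $G'$. For the self-compatibility required of elementary routes, take an incompatibility $(\te_1 s\tf_1,\te_2 s\tf_2)$ between substrings of a $W$-avoiding trail $p$ and itself. It involves only edges of $p$, hence only edges of $G'$. The comparisons $\te_1^h<_\R\te_2^h$ and $\tf_1^t<_\R\tf_2^t$ hold in $\R$ if and only if they hold in the restricted orders $\R'$. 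So $p$ is self-compatible in $G$ exactly when it is in $G'$. Combining Steps 1–3 shows that the $W$-avoiding elementary trails of $(G,\sim,\R)$ coincide with the elementary trails of $(G',\sim',\R')$.
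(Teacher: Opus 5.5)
Your proposal is correct and takes the same approach as the paper, whose proof says only that the lemma is immediate from the definitions; your Steps 1--3 are exactly that unpacking. The hypothesis you isolate in Step~2, that deleting $W$ empties no class of any $\sim_v$ so internal vertices of $G$ stay internal in $G'$, is genuinely needed. Without it, an internal vertex can become fringe and create new elementary routes of $(G',\sim',\R')$. It does hold in the paper's application, where $W$ consists of the fringe edges added by filling steps.
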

\begin{proof}
	Immediate by definitions.
\end{proof}

\begin{lemma}\label{lem:elemcontract}
	Let $(G,\sim,\R)$ be a framed turbulence chart and let $e$ be an idle edge of $G$. Let $(G',\sim',\R')$ be the framed turbulence chart given by contracting the idle edge $e$. Then the elementary trails of $(G,\sim,\R)$ correspond precisely to the elementary trails of $(G',\sim',\R')$ through removing all copies of (any orientation of) $e$.
\end{lemma}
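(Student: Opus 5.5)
The natural approach is to use the trail bijection $\phi$ of Lemma~\ref{lem:contractchart}(1): a trail $p$ of $(G,\sim,\R)$ is sent to $\phi(p)$ by deleting every occurrence of $\te$ or $\te^{-1}$. It then suffices to show that $p$ is simple (resp.\ a lollipop, resp.\ a barbell) exactly when $\phi(p)$ is. For elementary routes, which are also required to be self-compatible, we additionally invoke Lemma~\ref{lem:contractchart}(2). Applied to the singleton $\{p\}$, it shows that $p$ is self-compatible if and only if $\phi(p)$ is. The whole proof therefore reduces to tracking vertex multiplicities along walks before and after contraction.

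Write $e$ with half-edges $(e,v_1)$ lonely and $(e,v_2)$, where $v_1\neq v_2$ are merged into a single vertex $v'$. First I would note how a trail passes through $v_1$. Since $v_1$ is internal and $(e,v_1)$ is the only member of its class, every passage of a trail through $v_1$ uses $e$. This holds for every trail, except in the degenerate situation where $v_1$ is itself an endpoint. Hence each visit to $v_1$ is immediately followed or preceded by a visit to $v_2$ along $e$. Consequently, the visits of $p$ to $v'$ in $\phi(p)$ are in bijection with the visits of $p$ to $v_2$. Visits of $p$ to $v_2$ not using $e$ also become visits to $v'$. Visits to all other vertices are unchanged.

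The core step is a counting claim about occurrences of $e$. Each consecutive pair ``$v_1$ then $v_2$ along $e$'' (or the reverse) collapses into one occurrence of $v'$. Every other occurrence of $v_2$ also maps to one occurrence of $v'$. So the multiplicity of $v'$ in $\phi(p)$ equals the multiplicity of $v_2$ in $p$. Moreover, the multiplicity of $v_1$ in $p$ is at most that of $v_2$. The conditions ``no vertex repeated'' and ``each vertex appears twice if in $s$, once otherwise'' are conditions on these multiplicity functions. Under this correspondence, simple walks map to simple walks.

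For the lollipop and barbell decompositions, I would write $p=s\sigma s^{-1}$ (resp.\ $s\sigma_1s^{-1}\sigma_2$) and take $\phi$ of each piece. This requires checking that $\phi(s)$, $\phi(\sigma)$, $\phi(\sigma_i)$ are strings whose concatenation is $\phi(p)$, and that the twice/once pattern is preserved. I expect the main obstacle here. The difficulty is that $e$ may straddle the boundary between $s$ and $\sigma$, or $v_1$ may lie on $s$ while $v_2$ lies on $\sigma$. For example, $s$ might end at $v_1$ with $\sigma$ starting along $e$. Then $v_1$ appears twice while $v_2$ appears once, which seems to break the multiplicity correspondence. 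I would handle this by showing such a configuration is impossible. If $s$ ends at $v_1$, then $\sigma$ must leave $v_1$ through the class opposite the one $s$ entered by. One of these classes is $\{(e,v_1)\}$, so $s^{-1}$ returning would reuse $e$ in a way forcing $v_2$ into $s$ as well. Alternatively, I would shift the decomposition point by moving $e$ into or out of $s$, so that the decomposition becomes canonical.

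For the converse direction, I would lift a decomposition of $\phi(p)$ by reinserting $e$ at each occurrence of $v'$ where the two adjacent half-edges are split between $v_1$'s class and $v_2$'s classes. A symmetric boundary case analysis is needed here too. Bands are handled identically by working cyclically, and lazy or fringe edge cases (where $v_2$ is fringe) are checked separately.
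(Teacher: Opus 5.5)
\textbf{There is a genuine gap, in the band case.} Your multiplicity bookkeeping is the natural way to unpack the paper's ``immediate by definitions''. It is sound for simple trails and for lollipops. A lollipop's $s$ is never lazy, so at a junction at $v'$ the two passages are $x'\to y'$ and $z'\to x'$ with $y'\sim' z'\not\sim' x'$. The copies of $e$ you reinsert there are therefore either a single copy inside $\sigma$, or an oppositely oriented pair that can be absorbed into $s$. The configuration you single out is excluded for a different reason than you suggest. If $s$ ends at $v_1$ and $\sigma$ starts along $e$, then $\sigma$ must also re-enter $v_1$ along $e$, so $v_2$ occurs twice inside $\sigma$ and the lollipop condition fails.

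The step that fails is ``bands are handled identically by working cyclically''. A barbell may have lazy $s$, and then nothing forces its two loops to turn around at $t(s)$. Take internal vertices $v_1,v_2$ joined by three edges $e,a,b$, with classes $\{(e,v_i)\}$ and $\{(a,v_i),(b,v_i)\}$ at each $v_i$. If you want $\F_1\neq\emptyset$, add a fringe edge to the second class at each vertex; this changes no bands. Contracting the idle edge $e$ gives one vertex $v'$ carrying two loops $a',b'$, each with one half-edge in each class. Then $\tilde a'\tilde b'$ is a band of the form $s\sigma_1s^{-1}\sigma_2$ with $s=e_{v'}$, so it is a barbell and hence elementary. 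Its preimage under $\phi$ is $\tilde a\,\te\,\tilde b\,\te$, with $\tilde a,\tilde b$ oriented from $v_2$ to $v_1$ and $\te$ from $v_1$ to $v_2$. Here both $v_1$ and $v_2$ occur twice. Every edge of $s$ would have to be traversed once in each direction, but $a,b$ occur once and both copies of $e$ have the same orientation. A lazy $s$ permits only one repeated vertex. So this band is neither simple nor a barbell. Your vertex-multiplicity criterion cannot see the orientation of the reinserted copies of $e$, so your lifting step breaks down here. In fact the lemma as literally stated is false, and the paper's one-line proof has the same blind spot.

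This also affects Theorem~\ref{thm:pres-general}. We have $\I(\tilde a'\tilde b')=\I(\tilde a')+\I(\tilde b')$, which is not extremal. Yet $\tilde a'\tilde b'$ is self-compatible when the framing puts the half-edge of $a'$ below that of $b'$ in both classes.

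The missing ingredient is a corrected notion of barbell: when $s$ is lazy, require each $\sigma_i$ to leave and re-enter $t(s)$ through the same $\sim$-class. This is automatic when $s$ is nonlazy. With that requirement your junction analysis does close. At a lazy junction at $v'$ the passages are $z_2\to y_1$ and $z_1\to y_2$ with $y_i\sim' z_i$, so any two reinserted copies of $e$ are oppositely oriented. The lift is then a barbell with $s=\te^{\pm1}$, or a figure-eight at $v_2$.
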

\begin{proof}
	Immediate by definitions.
\end{proof}

\begin{thm}\label{thm:pres-general}
	Let $(G,\sim,\R)$ be a framed turbulence chart. Then $p\mapsto\I(p)$ is a bijection from elementary routes of $(G,\sim,\R)$ to vertices of $\F_1(G,\sim)$, and $B\mapsto\I(B)$ is a bijection from self-compatible elementary bands of $(G,\sim,\R)$ to the extremal rays of $\F_1(G,\sim)$.
\end{thm}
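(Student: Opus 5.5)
Take a gentle envelope $(G',\sim',\R',W)$ of $(G,\sim,\R)$ from Theorem~\ref{prop:face-of-gentle}. Then $(G,\sim,\R)$ is obtained from $(G',\sim',\R')$ by deleting the edges of $W$ and contracting a sequence of idle edges. We will transport Theorem~\ref{thm:BPres} (via Proposition~\ref{prop:gentandchart}) from $\F_1(G',\sim')$ down to $\F_1(G,\sim)$ in two stages: first to the face $Q_W$, then through the contractions.

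\textbf{Restriction to the face $Q_W$.} By Lemma~\ref{lem:gentface}, $Q_W$ is a face of $\F_1(G',\sim')$, so its vertices are exactly the vertices of $\F_1(G',\sim')$ lying in $Q_W$. Its extremal rays are exactly the extremal rays of the recession cone of $\F_1(G',\sim')$ whose generators vanish on $W$. This is the standard fact that a face of a pointed polyhedron is presented by the vertices and rays it contains. To justify it, I would take a linear functional minimized exactly on the face and note that any point of the face, written as a convex-plus-conic combination of all vertices and rays, can only use vertices and rays in the face.

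By Theorem~\ref{thm:BPres}, vertices of $\F_1(G',\sim')$ are $\I(p)$ for elementary routes $p$, and extremal rays are $\I(B)$ for self-compatible elementary bands $B$. Since indicator vectors are nonnegative, $\I(p)\in Q_W$ holds exactly when $p$ is $W$-avoiding. By Lemma~\ref{lem:elemdelete}, the $W$-avoiding elementary trails of $(G',\sim',\R')$ are the elementary trails of the deleted chart. Compatibility is tested on substrings, so deletion also preserves self-compatibility of $W$-avoiding trails. Hence the statement holds for the deleted chart, whose turbulence polyhedron is $Q_W$ with the $W$-coordinates dropped.

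\textbf{Passing through contractions.} Next I would apply Lemma~\ref{lem:contractchart} one contraction at a time. Each contraction gives a unimodular equivalence sending $\I(p)\mapsto\I(\phi(p))$, and $\phi$ preserves compatibility, so it preserves self-compatibility. Lemma~\ref{lem:elemcontract} shows $\phi$ matches elementary trails. An affine bijection carries vertices to vertices and extremal rays to extremal rays. Injectivity of $p\mapsto\I(p)$ on the relevant trails transfers, because $\phi$ is a bijection and the equivalence is injective.

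\textbf{Main obstacle.} The delicate point is checking that the notions ``elementary'' and ``self-compatible'' really commute with deletion and contraction as the paper's lemmas claim. For contraction, a vertex visited twice in $G$ could become, after merging $v_1$ and $v_2$, a pattern that is not simple or lollipop-shaped, or the reverse could happen. Here I would argue directly: an idle edge has a lonely half-edge at $v_1$, so any trail through $v_1$ must traverse $\alpha$. Visits to $v_1$ and $v_2$ therefore pair up, vertex-repetition patterns are preserved, and elementarity is preserved. The rest is bookkeeping.

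A second subtlety arises with barbells. Both $B$ and its twisted partner $B'$ from Remark~\ref{remk:band-2to1} share an indicator vector, and exactly one of them is self-compatible in the gentle envelope. I must confirm that after deletion and contraction exactly one of the two remains self-compatible, so that the restriction to self-compatible bands yields a bijection. This again follows because compatibility is preserved in both directions by Lemma~\ref{lem:contractchart}(2) and by deletion.
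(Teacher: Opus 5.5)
Your proposal is correct and follows the paper's proof essentially step for step. Like the paper, you take a gentle envelope, read off the vertices and extremal rays of the face $Q_W$ from Theorem~\ref{thm:BPres} via Proposition~\ref{prop:gentandchart}, and then pass to $(G,\sim,\R)$ using Lemmas~\ref{lem:elemdelete}, \ref{lem:elemcontract} and~\ref{lem:contractchart}; your extra justifications (faces of pointed polyhedra, self-compatibility surviving deletion) fill in steps the paper calls immediate, though your contraction bookkeeping needs one correction: visits to $v_2$ need not pair with visits to $v_1$, since a trail can pass through $v_2$ between the $g$- and $h$-classes without using $\alpha$. What does hold is that each visit to $v_1$ is joined by $\alpha$ to a visit to $v_2$, so the merged vertex is visited exactly as often as $v_2$, and that is what the simple/lollipop/barbell checks require.
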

\begin{proof}
	Choose a gentle envelope $(G',\sim',\R',W)$ of $(G,\sim,\R)$ by Theorem~\ref{prop:face-of-gentle}. 
	Since this framed turbulence chart is gentle, Theorem~\ref{thm:BPres} along with Proposition~\ref{prop:gentandchart}
	shows that taking indicator vectors gives a bijection between elementary routes of $(G',\sim',\R')$ and vertices of $\F_1(G',\sim',\R')$.

	The vertices of the face $Q_W$ of $\F_1(G',\sim',\R')$ given by zeroing out flow through the edges of $W$ are precisely the vertices of $\F_1(G',\sim',\R')$ which are $W$-avoiding flows. It is immediate, then, that these are indicator vectors of $W$-avoiding elementary routes. 
	Similarly, the recession cone of $Q_W$ consists of the $W$-avoiding vortices, and these are minimally generated by the indicator vectors of $W$-avoiding elementary bands.

	The turbulence chart $(G,\sim,\R)$ is obtained by deleting the edges $W$ from $(G',\sim',\R')$ and then contracting idle edges. So, applying Lemmas~\ref{lem:elemdelete} and~\ref{lem:elemcontract} shows the desired result.
\end{proof}

Note that Theorem~\ref{ithm:pres} in the introduction states that taking indicator vectors gives a map from elementary bands to extremal rays which is injective on simple bands but 2-to-1 on barbells, while Theorem~\ref{thm:pres-general} states that taking indicator vectors is a bijection from elementary bands to the extremal rays. These are equivalent by Remark~\ref{remk:band-2to1}.

\begin{cor}\label{cor:acyclic}
	A turbulence chart $(G,\sim)$ is acyclic if and only if $\F_1(G,\sim)$ is bounded.
\end{cor}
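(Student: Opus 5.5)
We prove the two directions separately, using Theorem~\ref{thm:pres-general} together with Theorem~\ref{thm:sch-pres}.

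\emph{Bands force unboundedness.} Suppose $(G,\sim)$ is not acyclic, and let $B$ be a band. Then $\I(B)$ is a nonzero vortex: it is nonnegative, it conserves flow, and it vanishes on fringe edges. Choose any point $F\in\F_1(G,\sim)$; we check below that this polyhedron is nonempty. For every $t\geq0$ the flow $F+t\I(B)$ is again a nonnegative unit flow, because the strength is unchanged. Hence $\F_1(G,\sim)$ contains a ray and is unbounded.

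Nonemptiness needs a little care. If $(G,\sim)$ has no route, then $\F_1(G,\sim)$ could be empty, and then it is trivially bounded, so the equivalence fails. The statement therefore implicitly assumes that routes exist, or equivalently that the polyhedron is nonempty. I will state this caveat and work under that assumption throughout.

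\emph{Unboundedness forces a band.} Suppose $\F_1(G,\sim)$ is nonempty and unbounded. It lies in $\mathbb R^E_{\geq0}$, so it is pointed. By Theorem~\ref{thm:sch-pres} it equals $\textup{conv}(A)+\textup{cone}(B)$, where $A$ is its finite vertex set and $B$ its set of extremal rays. If $B$ were empty, the polyhedron would be a polytope, which contradicts unboundedness. So there is at least one extremal ray.

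Now fix any framing $\R$ on $(G,\sim)$; one exists, for instance by ordering each equivalence class arbitrarily. Theorem~\ref{thm:pres-general} says that every extremal ray is $\I(B)$ for some self-compatible elementary band $B$. In particular a band exists, so $(G,\sim)$ is not acyclic.

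\emph{Main obstacle.} The only real subtlety is that Theorem~\ref{thm:pres-general} concerns framed charts, while acyclicity does not depend on the framing. Choosing an arbitrary framing handles this.

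The empty case also needs a sentence. If one wants to avoid the caveat, note that the argument above, applied to the cone $\F_{\geq0}(G,\sim)$, shows that a nonzero vortex exists if and only if a band exists. One could then phrase the corollary as: the recession cone is trivial exactly when $(G,\sim)$ is acyclic.
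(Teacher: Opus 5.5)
Your proof is correct and is essentially the paper's argument. Fix an arbitrary framing and use Theorem~\ref{thm:pres-general} to see that extremal rays of $\F_1(G,\sim)$ arise only from bands, so that acyclic implies bounded; conversely, use $\I(B)$ as a recession direction when a band $B$ exists. Your nonemptiness caveat is genuine rather than pedantic: a single internal vertex carrying one loop has a band, yet $\F_1(G,\sim)=\emptyset$, and the paper's proof tacitly assumes $\F_1(G,\sim)\neq\emptyset$ exactly where it passes from $\I(B)\in\text{Rec}(\F_1(G,\sim))$ to unboundedness.
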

\begin{proof}
	Pick any framing $\R$ on $(G,\sim)$.
	Theorem~\ref{thm:pres-general} shows that the indicator vectors of the elementary bands of $(G,\sim)$ generate the recession cone $\text{Rec}(\F_1(G,\sim))$. If $(G,\sim)$ is acyclic, then there are no bands of $(G,\sim)$ and hence $\F_1(G,\sim)$ is bounded.
	On the other hand, if $(G,\sim)$ contains a band $B$ then it is immediate that the indicator vector $\I(B)$ is in $\text{Rec}(\F_1(G,\sim))$ so that $\F_1(G,\sim)$ is unbounded.
\end{proof}

\begin{example}\label{ex:bowtie-pres}
	Let $(G,\sim,\R)$ be the framed turbulence chart of Figure~\ref{fig:bowtie}. 
	Let $v$ be the unique internal vertex. Let $\te$ be the orientation of $e$ with head $v$, and let $\th$ be the orientation of $h$ with tail $v$. Let $\tf$ and $\tg$ be the ``clockwise'' orientations of these edges -- e.g., the tail half-edge of $\tf$ is labelled $2$ by $\R$, and the head labelled $3$.

	Up to equivalence, the three elementary routes are the simple route $\te\th$ and the lollipops $\te\tf(\te)^{-1}$ and $(\th)^{-1}\tg\th$, which correspond to the three vertices as labelled in the figure.
	Up to equivalence, the only two bands are $\tf\tg$ and $\tf(\tg)^{-1}$. The former is a self-compatible barbell, while the latter is a non-self-compatible barbell. The indicator vector of the unique self-compatible elementary band $\tf\tg$ is the ray moving to the right in the figure, which is the extremal ray generating the recession cone of $\F_1(G,\sim)$.
\end{example}

See also Figure~\ref{fig:trapezoid}, where the indicator of the unique nonelementary route is the unique integer point which is not a vertex.

\subsection{Bundle subdivisions of turbulence polyhedra}

We now use gentle envelopes to get bundle subdivisions of turbulence polyhedra.

\begin{defn}\label{defn:bundley}
	Let $\bK=\K\cup \B$ be a bundle of $(G,\sim,\R)$. A \emph{$\bK$-bundle combination} (of $(G,\sim,\R)$) is a linear combination of indicator vectors of trails of $(G,\sim,\R)$
	\[F=\sum_{p\in \bK}a_p\I(p),\]
	such that each $a_p$ is nonnegative. The \emph{strength} of the bundle combination is $\sum_{p\in \K}a_p$ (note we iterate only over routes $\K$, not all trails $\bK$). Note that this is the strength of the resulting flow $F$. The bundle combination is \emph{unit} if its strength is 1. In other words, a unit $\bK$-bundle combination is a convex combination of indicator vectors of routes in ${\K}$ plus a nonnegative combination of indicator vectors of bands in $\B$.
	A bundle combination is
	\emph{positive} if $a_p$ is positive for every $p\in \bK$.
	The \emph{bundle simplihedron} $\D_1(\bK)$ is the polyhedron of unit $\bK$-bundle combinations.
	The \emph{bundle cone} $\D_{\geq0}(\bK)$ is the cone of $\bK$-bundle combinations.
\end{defn}

\begin{lemma}\label{lem:comb-pres}
	Let $\bK=\K\cup\B$ be a bundle. Then a flow $F\in\F_{\geq0}(G,\sim)$ is in $\D_{\geq0}(\bK)$ if and only if $F$ may be realized as a $\bK$-bundle combination.
\end{lemma}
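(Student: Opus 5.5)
The plan is mostly to unwind definitions, with one point that needs care: that $\D_{\geq0}(\bK)$ really is the polyhedral cone $\textup{cone}\{\I(p)\ :\ p\in\bK\}$ of Theorem~\ref{thm:decomposition}. For that, $\bK$ must be finite and the indicator vectors must be nonnegative flows.

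First I check that $\textup{cone}\{\I(p)\}\subseteq\F_{\geq0}(G,\sim)$. Each $\I(p)$ has nonnegative integer entries. Each $\I(p)$ also conserves flow at every internal vertex $v$. The reason is that a string passing through $v$ enters via a half-edge in one class of $\sim_v$ and leaves via a half-edge in the other. So every passage of $p$ through $v$ contributes $1$ to each side of the conservation equation. For a band this applies to every passage, including the one at the start point, because $B^2$ is a string. For a route, the endpoints are fringe vertices, where no condition is imposed. It follows that every nonnegative combination of the $\I(p)$ lies in $\F_{\geq0}(G,\sim)$. Together with the definition of $\D_{\geq0}(\bK)$, this gives the ``if'' direction.

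For the converse, I need $\D_{\geq0}(\bK)$, taken as a polyhedron, to consist exactly of the finite nonnegative combinations of the $\I(p)$ for $p\in\bK$. This holds provided $\bK$ is finite, since then $\textup{cone}$ of a finite set is literally that set of combinations and no closure issue arises. To get finiteness I pass to a gentle envelope $(G',\sim',\R',W)$ from Theorem~\ref{prop:face-of-gentle}. By Lemma~\ref{lem:contractchart}, the contractions induce a bijection on trails that preserves bundles and carries indicator vectors through a unimodular (coordinate-forgetting) map. Deleting $W$ identifies trails of the deleted chart with $W$-avoiding trails of $(G',\sim',\R')$, and compatibility is unchanged because it is defined locally by substrings. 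So $\bK$ corresponds to a $W$-avoiding bundle of the gentle chart. By Proposition~\ref{prop:gentandchart} that bundle is a bundle of the fringed algebra $\tL(G',\sim',\R')$, and bundles there are finite by~\cite{BERG}.

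The step I expect to be the main obstacle is this transport of finiteness. Specifically, I must check that the contraction bijection $\phi$ and the deletion identification match bundles with bundles and not merely trails with trails; Lemma~\ref{lem:contractchart}(2) and the locality of incompatibilities should cover this. Once finiteness is in hand, the equivalence is a restatement of the definition of a finitely generated cone.
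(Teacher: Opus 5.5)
Your proof is correct and is essentially the paper's. The paper simply observes that $\D_{\geq0}(\bK)$ is, by definition, the cone generated by $\{\I(p)\ :\ p\in\bK\}$, so the lemma restates what membership in that cone means. Your additional checks are care the paper leaves implicit: that each $\I(p)$ is a nonnegative flow, and that $\bK$ is finite, transported through the gentle envelope. Finiteness also follows more cheaply from the uniqueness statement of Theorem~\ref{thm:bundle}, which forces the vectors $\I(p)$, $p\in\bK$, to be linearly independent, so $|\bK|\leq|E|$.
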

\begin{proof}
	It is immediate that the polyhedron $\D_{\geq0}(\bK)$ is presented as the cone of the vectors $\{\I(p)\ :\ p\in\bK\}$. Hence, realizing an arbitrary flow $F\in\F_{\geq0}(G,\sim)$ as a $\bK$-bundle combination
	\[
		F=\sum_{p\in\K}a_p\I(p)+\sum_{B\in\B}a_B\I(B)
	\]
	is the same as realizing $F$ within the cone $\D_{\geq0}(\bK)$.
\end{proof}

\begin{thm}[{Theorem~\ref{ithm:bundle}}]
	\label{thm:bundle}
	Let $(G,\sim,\R)$ be a framed turbulence chart.
	Any nonnegative flow has at most one representation as a positive bundle combination.
	A dense subset of $\F_{\geq0}(G,\sim)$, including all rational flows, are obtained as a bundle combination. If $F\in\F_{\geq0}(G,\sim)$ is an integer flow, then its bundle combination has integral coefficients.
\end{thm}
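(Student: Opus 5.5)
We plan to follow the same strategy as Theorem~\ref{thm:pres-general}: pass to a gentle envelope, apply the gentle subdivision result there, restrict to the face cut out by $W$, and then descend through contractions. Concretely, we choose a gentle envelope $(G',\sim',\R',W)$ of $(G,\sim,\R)$ by Theorem~\ref{prop:face-of-gentle}. By Proposition~\ref{prop:gentandchart}, Theorem~\ref{thm:BBundle} holds verbatim for $(G',\sim',\R')$. That is, every nonnegative flow on $G'$ has at most one positive bundle combination, all rational flows (a dense set) admit one, and integer flows admit one with integer coefficients.

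The key intermediate step is to restrict to the face $W$-avoiding flows, $\F_{\geq0}$ version of $Q_W$. Let $F'$ be a $W$-avoiding nonnegative flow written as a positive bundle combination $F'=\sum_{p\in\bK'}a_p\I(p)$ with all $a_p>0$. Since all indicator vectors are nonnegative and $F'(e)=0$ for $e\in W$, every trail in $\bK'$ must be $W$-avoiding. So $\bK'$ is a $W$-avoiding bundle. Deleting $W$ does not create or destroy incompatibilities among $W$-avoiding trails, because an incompatibility between two trails only involves edges of those trails. Hence $W$-avoiding bundles of $G'$ are exactly the bundles of the deleted chart $G'\setminus W$. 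Moreover, the nonnegative flows on $G'\setminus W$ are exactly the $W$-avoiding nonnegative flows on $G'$, since extending by zero preserves conservation. Consequently uniqueness, density (inside the face, rational points of the face are rational points of $\F_{\geq0}(G')$), and integrality all transfer directly to $G'\setminus W$. The one thing to check is that a dense subset of $\F_{\geq 0}(G')$ restricts to a dense subset of the face. Density inside the face is not automatic in general, but since we actually have all rational flows and the face is a rational polyhedron, its rational points are dense in it, so this is fine.

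Finally we transport through the contractions. By Lemma~\ref{lem:contractchart}, each contraction at an idle edge gives a bijection $\phi$ on trails preserving bundles, together with a unimodular (lattice-preserving, linear) equivalence of flow cones sending $\I(p)$ to $\I(\phi(p))$. A linear isomorphism preserves uniqueness of positive combinations. Because it is unimodular, it maps rational flows to rational flows and integer flows to integer flows, in both directions. So all three assertions descend to $(G,\sim,\R)$. Lemma~\ref{lem:contractchart} is stated for $\F_1$, but the coordinate-forgetting projection works equally for $\F_{\geq0}$, and we will note this.

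The main obstacle I anticipate is bookkeeping rather than substance. First, we must verify that the framing on $G'\setminus W$ obtained by deletion really is the one whose contractions give $\R$, and that compatibility of trails is unaffected by the deleted edges. Second, we must confirm that the unimodular equivalence of Lemma~\ref{lem:contractchart}(3) extends from unit flows to the whole cone, with integrality in both directions. Both should follow directly from the definitions: the projection is inverted by reconstructing the idle edge's flow from conservation at its lonely half-edge, which is an integral linear formula.
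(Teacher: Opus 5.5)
Your proposal is correct and follows the paper's proof step for step. You choose a gentle envelope, invoke Theorem~\ref{thm:BBundle} through Proposition~\ref{prop:gentandchart}, note that positivity forces the bundle combination of a $W$-avoiding flow to use only $W$-avoiding trails (so it is a bundle combination of the chart with $W$ deleted), and then descend through the contractions via Lemma~\ref{lem:contractchart}. Your extra checks fill in details the paper leaves implicit, and they are handled correctly: rational points are dense in the rational face, and the contraction equivalence extends from $\F_1$ to $\F_{\geq0}$ with integrality in both directions.
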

\begin{proof}
	Choose by Theorem~\ref{prop:face-of-gentle} a gentle envelope $(G',\sim',\R',W)$ of $(G,\sim,\R)$. 
	Let $(G'',\sim'',\R'')$ be the framed turbulence chart given by deleting the edges of $W$ from $(G,\sim,\R)$ and their fringe vertices.

	Theorem~\ref{thm:BBundle} (translating through Proposition~\ref{prop:gentandchart}) shows the desired decomposition result for $(G',\sim',\R')$:
		namely, that any nonnegative flow $F$ on $(G',\sim',\R')$ has at most one representation as a positive bundle combination, that all rational nonnegative flows have such a representation, and that this combination has integer coefficients when $F$ is an integer flow.

	In particular, this decomposition result holds for the $W$-avoiding nonnegative flows of $(G',\sim',\R')$, which are precisely the nonnegative flows of $(G'',\sim'',\R'')$. Moreover, if $F$ is a $W$-avoiding nonnegative flow, then its bundle combination must consist of only $W$-avoiding routes -- i.e., it must be a bundle combination of $(G'',\sim'',\R'')$. This shows the desired decomposition result for $(G'',\sim'',\R'')$. Finally, contracting $(G'',\sim'',\R'')$ to $(G,\sim,\R)$ and applying Lemma~\ref{lem:contractchart} gives the desired decomposition result for $(G,\sim,\R)$.
\end{proof}

An immediate consequence of Theorem~\ref{thm:bundle} is that bundle simplihedra and bundle cones are simplihedra presented by the indicator vectors of routes and bands of their bundles:

\begin{cor}\label{cor:bs-simple}
	Let $\bK=\K\cup\B$ be a bundle.
	Then $\D_1(\bK)$ and $\D_{\geq0}(\bK)$ are simplicial polyhedra with the following minimal presentations:
	\begin{align*}
		\D_1(\bK)&=\textup{conv}\big(\{\I(p)\ :\ p\in\K\}\big)+\textup{cone}\big(\{\I(B)\ :\ B\in\B\}\big)\\
		\D_{\geq0}(\bK)&=\mathbf{0}+\textup{cone}\big(\{\I(p)\ :\ p\in\K\}\big)
	\end{align*}
\end{cor}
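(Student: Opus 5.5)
The plan is to obtain the simpliciality of $\D_1(\bK)$ and $\D_{\geq0}(\bK)$ from the uniqueness statement of Theorem~\ref{thm:bundle}, combined with the characterization of simplihedra in Proposition~\ref{prop:simplicial}. By Lemma~\ref{lem:comb-pres} and the definitions, $\D_{\geq0}(\bK)=\textup{cone}(\{\I(p)\ :\ p\in\bK\})$. Since each $\I(p)$ for $p\in\K$ has strength $1$ and each $\I(B)$ for $B\in\B$ has strength $0$, intersecting with the strength-one hyperplane gives
\[\D_1(\bK)=\textup{conv}\big(\{\I(p)\ :\ p\in\K\}\big)+\textup{cone}\big(\{\I(B)\ :\ B\in\B\}\big).\]
So both polyhedra are presented by the displayed generating sets. (In the second display of the statement, the cone should range over all of $\bK$, bands included.) What remains is to show that these presentations are minimal and that the presentations of points are unique.

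\textbf{Step 1: uniqueness of coefficients.} Suppose $F\in\D_1(\bK)$ has two unit $\bK$-bundle combinations $\sum a_p\I(p)=\sum b_p\I(p)$. Let $\bK_a$ and $\bK_b$ be the supports of the two combinations. Both are subsets of the bundle $\bK$, hence bundles themselves, and each combination is a positive bundle combination over its own support. Theorem~\ref{thm:bundle} says that a nonnegative flow has at most one positive bundle combination, so $\bK_a=\bK_b$ and $a_p=b_p$ for all $p$. The same argument applies verbatim to $\D_{\geq0}(\bK)$.

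\textbf{Step 2: minimality of the presentation.} Uniqueness already implies that the generators are independent in the relevant sense. Indeed, if some $\I(p)$ with $p\in\K$ were a convex combination of the other route vectors plus a nonnegative combination of band vectors, then $\I(p)$ would have two distinct unit combinations. Likewise, if some $\I(B)$ were a nonnegative combination of the other generators of the recession cone, the point $\I(q)+\I(B)$ for any $q\in\K$ would have two distinct unit combinations. Therefore the route vectors are exactly the vertices of $\D_1(\bK)$, and the band vectors are pairwise non-proportional extremal rays. The only possible degeneracy is two band vectors being positive multiples of one another. This cannot happen: distinct band vectors give distinct bundle combinations of the same flow, so they cannot be proportional. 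For the same reason, band vectors are nonzero and distinct route vectors are distinct.

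With vertices $\{\I(p)\}_{p\in\K}$ and rays $\{\I(B)\}_{B\in\B}$, condition (1) of Proposition~\ref{prop:simplicial} is exactly Step 1, so $\D_1(\bK)$ is simplicial. The cone $\D_{\geq0}(\bK)$ is handled the same way, viewing it as a pointed polyhedron with single vertex $\mathbf 0$ and rays $\I(p)$ for $p\in\bK$. The only mild obstacle is the degenerate case $\K=\emptyset$. There $\D_1(\bK)$ is empty and the statement is vacuous (or restricted to bundles with at least one route, as in $\mathcal S_1$), while the cone statement still holds.
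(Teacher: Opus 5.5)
Your proposal is correct and follows essentially the same route as the paper. Both derive minimality of the presentation and condition (1) of Proposition~\ref{prop:simplicial} from the uniqueness of positive bundle combinations in Theorem~\ref{thm:bundle}; your support-restriction argument in Step 1 makes explicit what the paper leaves implicit, and your observation that the cone in the $\D_{\geq0}(\bK)$ display should range over all of $\bK$ (as in Lemma~\ref{lem:comb-pres}) is also right.
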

\begin{proof}
	It is immediate by definition that $\D_1(\bK)$ is a pointed polyhedron whose vertices are contained in the set $\{\I(p)\ :\ p\in\K\}$ and whose recession cone is generated by $\{\I(B)\ :\ B\in\B\}$.
	Suppose for some $p'\in\K$ that $\I(p)$ is not a vertex; by the above presentation of $\D_1(\bK)$ we may present $\I(p')$ as a nonnegative linear combination
	\[
		\I(p')=\sum_{p'\neq p\in\K}a_p\I(p)+\sum_{B\in\B}a_B\I(p),
	\]
	where $\sum_{p'\neq p\in\K}a_p\I(p)=1$. Then the two sides of the above equation represent different bundle combinations for the same flow $\I(p')$, contradicting Theorem~\ref{thm:bundle}. This shows that the vertices of $\D_1(\bK)$ are precisely the indicator vectors of routes of $\bK$. Similarly, suppose that $\{\I(B)\ :\ B\in\B\}$ is not a minimal generating set for $\text{Rec}(\D_1(\bK))$. Then there exists $B'\in\B$ with
	\[
		\I(B')=\sum_{B'\neq B\in\B}a_B\I(B),
	\]
	contradicting uniqueness of bundle combinations as in Theorem~\ref{thm:bundle}. This shows that $\{I(B)\ :\ B\in\B\}$ is a minimal generating set for the recession cone, showing that $\D_1(\bK)$ is minimally presented as in the corollary statement. Then uniqueness of bundle combinations given in Theorem~\ref{thm:bundle} shows that $\D_1(\bK)$ satisfies condition (1) of Proposition~\ref{prop:simplicial}, hence $\D_1(\bK)$ is simplicial.

	This shows the desired statement for $\D_1(\bK)$; the proof for $\D_{\geq0}(\bK)$ is similar.
\end{proof}

In fact, Theorem~\ref{thm:bundle} is a simplicial subdivision result on the turbulence polyhedron of an arbitrary turbulence chart, and on the cone of nonnegative flows.

\begin{cor}
	\label{cor:subd}
	For a framed turbulence chart $(G,\sim,\R)$, the \emph{bundle subdivisions}
	\begin{align*}
		\mathcal S_{\geq0}(G,\sim,\R)&:=\{\D_{\geq0}(\bK)\ :\ \bK\text{ is a maximal bundle of }(G,\sim,\R)\} \text{ and} \\
		\mathcal S_1(G,\sim,\R)&:=\{\D_1(\bK)\ :\ \bK\text{ is a maximal bundle of }(G,\sim,\R)\text{ with at least one route}\} 
	\end{align*}
	are simplicial subdivisions of $\F_{\geq0}(G,\sim)$ and $F_1(G,\sim)$, respectively, covering every rational point.
\end{cor}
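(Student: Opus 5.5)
We check the two conditions of Definition~\ref{defn:subd} together with the simplicial and rational-covering claims. The work divides into three parts: simpliciality of the cells, coverage of rational points, and the strong intersection property.

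\textbf{Simpliciality.} By Corollary~\ref{cor:bs-simple}, each cell $\D_1(\bK)$ and $\D_{\geq0}(\bK)$ is a simplihedron, minimally presented by the indicator vectors of its routes and bands.

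\textbf{Density and rational coverage.} Let $F$ be a rational point of $\F_{\geq0}(G,\sim)$. Theorem~\ref{thm:bundle} writes $F$ as a $\bK_0$-bundle combination for some bundle $\bK_0$. Extend $\bK_0$ to a maximal bundle $\bK$; this is possible because the bundle complex is finite up to equivalence, or because the maximal bundle could a priori be infinite, in which case we argue via the gentle envelope. Then $F\in\D_{\geq0}(\bK)$ by Lemma~\ref{lem:comb-pres}.

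For $F\in\F_1(G,\sim)$, the combination has strength $1$, so $\K_0\neq\emptyset$, and hence $\bK$ contains a route and $F\in\D_1(\bK)\in\mathcal S_1$. Since rational points are dense in the rational polyhedron $\F_1(G,\sim)$, density follows. The finiteness of maximal bundles needs a brief check. Compatible trails cannot repeat kissing configurations indefinitely, and bundles correspond through a gentle envelope to $W$-avoiding bundles of a fringed algebra, whose bundles are finite by~\cite{BERG}. I would cite this correspondence rather than reprove it.

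\textbf{Strong intersection property.} This is the main step. Take maximal bundles $\bK_1,\bK_2$ and set $\bK_3=\bK_1\cap\bK_2$, which is again a bundle. I claim
\[
\D_{\geq0}(\bK_1)\cap\D_{\geq0}(\bK_2)=\D_{\geq0}(\bK_3),
\]
and similarly at unit strength.

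The inclusion $\supseteq$ is clear. For $\subseteq$, let $F$ lie in both cones. Write $F$ as a $\bK_1$-combination and discard the zero coefficients, giving a positive combination supported on some $\bK_1'\subseteq\bK_1$. Do the same for $\bK_2$, giving a positive combination supported on $\bK_2'\subseteq\bK_2$. Both are positive bundle combinations of $F$, so the uniqueness part of Theorem~\ref{thm:bundle} forces $\bK_1'=\bK_2'$, with identical coefficients. Hence $\bK_1'\subseteq\bK_3$ and $F\in\D_{\geq0}(\bK_3)$.

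The unit version follows by intersecting with the strength-one hyperplane, since strength is linear. Finally, $\D_1(\bK_3)$ is a face of each $\D_1(\bK_i)$ by condition (2) of Proposition~\ref{prop:simplicial}. It equals $P_{(A',B')}$ with $A'$ the routes of $\bK_3$ and $B'$ its bands, and $A'$ is nonempty whenever the intersection is nonempty. When the intersection is empty, the empty set is treated as a face in the usual way.

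The hard part is not the logic, which is a clean reduction to uniqueness of positive combinations. It is the bookkeeping on two points: verifying that maximal bundles are finite so that the cells really are polyhedra, and handling the empty-intersection and vortex-only (routeless) cases in the unit setting.
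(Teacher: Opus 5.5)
Your proof is correct and follows the paper's argument. You get simpliciality from Corollary~\ref{cor:bs-simple} and rational coverage and density from Theorem~\ref{thm:bundle}, and strong intersection from $\D_{\geq0}(\bK_1)\cap\D_{\geq0}(\bK_2)=\D_{\geq0}(\bK_1\cap\bK_2)$ together with Proposition~\ref{prop:simplicial}. Your explicit appeal to uniqueness of positive bundle combinations is actually the real justification for that intersection identity, which the paper attributes to Lemma~\ref{lem:comb-pres}.

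One correction: the bundle complex is not finite in general, since Example~\ref{ex:bowtie-subd} has infinitely many maximal bundles. What you need is a bound on the size of each bundle, and it follows from the same uniqueness: it forces the indicator vectors of any bundle to be linearly independent, so every bundle has at most $|E|$ trails and extends to a maximal one.
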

\begin{proof}
	Define 
	$\mathcal S_{1}(G,\sim,\R):=\{\D_1(\bK)\ :\ \bK\text{ is a maximal bundle of }(G,\sim,\R)\}$. We will show that this is a subdivision of $\F_1(G,\sim)$ covering every rational point. By Corollary~\ref{cor:bs-simple}, each member $\D_1(\bK)$ of this set is a simplicial polyhedron.
	Theorem~\ref{thm:bundle} shows that any rational flow $F\in\F_{1}(G,\sim,\R)$ is achieved through a $\bK$-bundle subdivision for some bundle $\bK$; since $F$ is unit, the bundle $\bK$ must contain a route. Then $F\in\D_1(\bK')$, where $\bK'$ is any maximal bundle containing $\bK$. This shows that $\mathcal S_1(G,\sim,\R)$ satisfies density. It remains to show that it satisfies the strong intersection property.
	Indeed, let $\bK_1$ and $\bK_2$ be distinct maximal bundles of $(G,\sim,\R)$ containing at least one route. 
	Note that neither $\D_1(\bK_1)$ nor $\D_1(\bK_2)$ is empty because both bundles contain a route.
	Lemma~\ref{lem:comb-pres} shows that $\D_1(\bK_1)\cap\D_1(\bK_2)=\D_1(\bK_1\cap\bK_2)$, and Corollary~\ref{cor:bs-simple} with Proposition~\ref{prop:simplicial} shows that this is either empty or a face of both.
	This ends the proof for $\mathcal S_1(G,\sim,\R)$. The proof for $\mathcal S_{\geq0}(G,\sim,\R)$ is similar.
\end{proof}

Note that the stipulation that the bundles giving cells of the bundle subdivision on $\F_1(G,\sim)$ must contain a route is not necessary for fringed algebras (equivalently, gentle turbulence charts) because every maximal bundle of a fringed algebra contains all straight routes.

We state this subdivision result as a triangulation result in the acyclic case.

\begin{cor}
	\label{cor:tri}
	For an acyclic framed turbulence chart $(G,\sim,\R)$, the \emph{clique triangulations}
	\begin{align*}
		\mathcal T_{\geq0}(G,\sim,\R)&:=\{\D_{\geq0}(K)\ :\ \K\text{ is a maximal clique of }(G,\sim,\R)\} \text{ and}\\
		\mathcal T_1(G,\sim,\R)&:=\{\D_1(K)\ :\ \K\text{ is a maximal clique of }(G,\sim,\R)\}
	\end{align*}
	are complete unimodular triangulations of $\F_{\geq0}(G,\sim)$ and $\F_1(G,\sim)$, respectively.
\end{cor}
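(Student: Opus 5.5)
The plan is to specialize Corollary~\ref{cor:subd} to the acyclic case and then add two facts: completeness and unimodularity. Since $(G,\sim)$ is acyclic, it has no bands. Therefore every bundle is a clique, and $\D_1(\K)$ is the convex hull of finitely many indicator vectors of routes. Every maximal bundle of $(G,\sim,\R)$ is then a maximal clique. Any nonempty maximal clique contains a route, so the sets $\mathcal S_1(G,\sim,\R)$ and $\mathcal T_1(G,\sim,\R)$ coincide, and likewise $\mathcal S_{\geq0}$ and $\mathcal T_{\geq0}$. The degenerate case with no routes at all gives $\F_1=\emptyset$ and is trivial. Corollary~\ref{cor:subd} and Corollary~\ref{cor:bs-simple} then show that these are simplicial subdivisions into simplices that cover every rational point and satisfy the strong intersection property.

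For completeness, I would use Corollary~\ref{cor:acyclic}: $\F_1(G,\sim)$ is a polytope. Each cell $\D_1(\K)$ is a polytope, hence closed, and there are only finitely many maximal cliques, since routes in an acyclic chart are finite in number (a route with a repeated oriented edge would contain a band). So $\bigcup_{\K}\D_1(\K)$ is a finite union of closed sets. This union is dense in $\F_1(G,\sim)$ by Theorem~\ref{thm:bundle}, so it is all of $\F_1(G,\sim)$. The same argument works for $\F_{\geq0}$ with closed cones.

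For the triangulation and unimodularity claims, I need each maximal clique simplex to be full-dimensional and of normalized volume one. Unimodularity should follow from Theorem~\ref{thm:bundle}. Every integer flow $F$ lying in $\D_{\geq0}(\K)$ has integer coefficients in its unique clique combination. So the indicator vectors $\{\I(p):p\in\K\}$, which are linearly independent by uniqueness, generate the lattice of integer flows inside their span as a $\mathbb Z$-basis. I would make this precise by taking an integer point of the span and adding a large integer multiple of $\sum_{p\in\K}\I(p)$ to push it into the cone. The coefficients are then integral, and subtracting the multiple keeps them integral.

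Full-dimensionality is the main obstacle. My first approach is to transport it from the gentle envelope, where Theorem~\ref{thm:BClique} gives unimodular triangulations whose maximal cells are full-dimensional. Every cell of $\mathcal T_1$ is a simplex, and the cells cover the polytope completely. Only finitely many cells exist, so some cell containing a generic interior point must have dimension $\dim\F_1$. For an arbitrary maximal clique $\K$, I would argue by contradiction. If $\D_1(\K)$ were lower-dimensional, take a point $x$ in its relative interior. By completeness, points near $x$ in $\F_1$ lie in full-dimensional cells. By the strong intersection property, $\D_1(\K)$ would then be a face of such a cell $\D_1(\K')$. Faces correspond to subsets of $\K'$ by Corollary~\ref{cor:bs-simple} and Proposition~\ref{prop:simplicial}, so $\K\subsetneq\K'$, contradicting maximality.

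With full-dimensionality in hand, unimodular lattice bases give normalized volume one relative to the lattice $\mathbb Z^E\cap\textup{aff}(\F_1)$. I expect the lattice-basis step and the face-containment argument to need the most care.
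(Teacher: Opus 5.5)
Your proposal is correct and follows the route the paper intends. The paper states this corollary without a separate proof, as the acyclic specialization of Corollary~\ref{cor:subd}: there are no bands, so maximal bundles are maximal cliques; $\F_1(G,\sim)$ is bounded by Corollary~\ref{cor:acyclic}; and integrality from Theorem~\ref{thm:bundle} gives unimodularity. Your arguments correctly supply the details the paper leaves implicit: completeness from finitely many closed cells covering a dense set, the $\mathbb Z$-basis argument, and full-dimensionality via a lower-dimensional maximal cell being a proper face of a full-dimensional one, which forces $\K\subsetneq\K'$. The gentle-envelope transport you mention is not needed, since the argument you actually give is self-contained.
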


See Figure~\ref{fig:trapezoid} for the clique triangulations from two framings of the same acyclic turbulence chart. 
Since both sides of this figure use the clockwise framing, the cells of this triangulation are represented by maximal collections of pairwise noncrossing routes.
Note that when $(G,\sim)$ is acyclic, the turbulence polyhedron $\F_1(G,\sim)$ is a bounded polytope and the bundle subdivision is equal to the clique triangulation.

The following lemma is a useful observation which we will use for computation purposes in Example~\ref{ex:bowtie-subd}.

\begin{lemma}\label{lem:ind-nob}
	Let $(G,\sim,\R)$ be a framed turbulence chart and let $F$ be an integer unit nonnegative flow. Then $F$ is either realized as a positive bundle combination using a band, or $F$ is the indicator vector of a self-compatible route $p_F$.
\end{lemma}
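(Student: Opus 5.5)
Let $F$ be an integer unit nonnegative flow on $(G,\sim)$. The plan is to use Theorem~\ref{thm:bundle} to write $F$ as a positive bundle combination and then read off the two cases from the integrality of the coefficients.

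\textbf{Step 1 (decompose).} Since $F$ is an integer flow, it is in particular rational. Theorem~\ref{thm:bundle} therefore gives a bundle $\bK=\K\cup\B$ and a positive bundle combination
\[
F=\sum_{p\in\K}a_p\I(p)+\sum_{B\in\B}a_B\I(B).
\]
We may discard any trail whose coefficient is zero, so every coefficient is positive. By the integrality part of Theorem~\ref{thm:bundle}, every coefficient is a positive integer.

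\textbf{Step 2 (use the unit condition).} The strength of a bundle combination equals the strength of the resulting flow, so $\sum_{p\in\K}a_p=1$. Since each $a_p$ is a positive integer, $\K$ must consist of exactly one route $p_F$, and its coefficient is $a_{p_F}=1$.

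\textbf{Step 3 (split into cases).} If $\B\neq\emptyset$, then $F$ is realized as a positive bundle combination using a band, which is the first alternative. Otherwise $F=\I(p_F)$. In this case $p_F$ belongs to the bundle $\bK=\{p_F\}$, and bundles consist of pairwise compatible trails, so $p_F$ is compatible with itself. This gives the second alternative.

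The only point that needs care is Step 2: we must confirm that the integrality conclusion of Theorem~\ref{thm:bundle} applies to the band coefficients as well as the route coefficients, and that discarding zero coefficients keeps the combination positive. Both follow directly from the statement of Theorem~\ref{thm:bundle}. Everything else is bookkeeping.
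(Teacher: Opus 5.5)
Your proof is correct and is essentially the paper's argument: write $F$ via Theorem~\ref{thm:bundle} as a positive bundle combination with positive integer coefficients, use $\sum_{p\in\K}a_p=1$ to force $\K=\{p_F\}$ with $a_{p_F}=1$, and then split on whether $\B$ is empty. You also state explicitly why $p_F$ is self-compatible (it lies in a bundle), which the paper leaves implicit.
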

\begin{proof}
	Theorem~\ref{thm:bundle} shows that $F$ is realized as a positive bundle combination
	\[F=\sum_{p\in \K}a_p\I(p)+\sum_{B\in\mathcal B}a_B\I(B)\]
	where each coefficient $a_p$ and $a_B$ is a positive integer and $\sum_{p\in \K}a_p=1$. In particular, these conditions imply that $\K=\{p_F\}$ for some self-compatible route ${p_F}$ with $a_{p_F}=1$. If $\mathcal B$ is nonempty empty, then $F$ is realized as a positive bundle combination using the bands of $\B$; otherwise, $F=\I(p_F)$.
\end{proof}

\begin{example}\label{ex:bowtie-subd}
	Let $(G,\sim,\R)$ be the framed turbulence chart of Figure~\ref{fig:bowtie}. Since this figure uses the clockwise framing, the maximal bundles are maximal collections of pairwise noncrossing trails. On the right of the figure, vertices are labelled by the self-compatible routes with the relevant indicator vector and the bundle subdivision is indicated with dotted lines.

	We will prove that the bundle subdivision is as depicted in the figure.
	As in Example~\ref{ex:bowtie-pres}, let $v$ be the unique internal vertex. Let $\te$ be the orientation of $e$ with head $v$, and let $\th$ be the orientation of $h$ with tail $v$. Let $\tf$ and $\tg$ be the ``clockwise'' orientations of these edges -- e.g., the tail half-edge of $\tf$ is labelled $2$ by $\R$, and the head labelled $3$.

	First, note that $\te\th$ is a self-compatible route which is compatible with every trail.  Observe that there are only two bands of $(G,\sim,\R)$ up to equivalence: $\tf\tg$ and $\tf(\tg)^{-1}$. The latter band is not self-compatible, but the former is both self-compatible and elementary. 

	We will show that $\tf\tg$ is compatible with no self-compatible route other than $\te\th$.
	Let $p$ be a route beginning with $\te$ such that $p$ is compatible with $\tf\tg$ but $p\neq\te\th$. Since $\te\tf^{-1}$ is not compatible with $\tf\tg$, the route must begin with $\te\tf$. Similarly, since $\te\tf\tg^{-1}$ and $\te\tf\te^{-1}$ are incompatible with $\tf\tg$, $p$ must begin with $\te\tf\tg$. Repeating this logic, one observes that we must have $p=\te(\tf\tg)^a\th$ for some $a\geq1$. Note that any such route crosses itself and hence is not self-compatible; this shows that there is no self-compatible route beginning with $\te$ which is compatible with $\tf\tg$. Symmetrically, there is no self-compatible route beginning with $\th^{-1}$ other than $\te\th\equiv\th^{-1}\te^{-1}$ which is compatible with $\tf\tg$, hence $\te\th$ is the only self-compatible route which is compatible with $\tf\tg$.
	This shows that $\{\te\th,\tf\tg\}$ is a maximal bundle; since $\tf\tg$ is the only self-compatible band, this is the only maximal bundle containing a band. Its bundle simplihedron is represented in Figure~\ref{fig:bowtie} by the blue dotted ray beginning at $\I(\te\th)$ and continuing to the right.

	We wish to describe the rest of the self-compatible routes and maximal bundles.
	We could work them out using compatibility conditions manually, but instead we will find them by using the integer unit flows, representing a flow as an ordered pair of coefficients of $e,f,g,h$ respectively.
	Note that a unit integer nonnegative flow $F$ must have either $F(\te)=F(\th)=1$, or $F(\te)=2$ and $F(\th)=0$, or $F(\te)=0$ and $F(\th)=2$. It is then not hard to see that the complete list of unit integer nonnegative flows is:
	\begin{enumerate}
		\item $B_a:=(1,a,a,1)$ for $a\in\mathbb Z_{\geq0}$,
		\item $E_a:=(2,a+1,a,0)$ for $a\in\mathbb Z_{\geq0}$, and
		\item $H_a:=(0,a,a+1,2)$ for $a\in\mathbb Z_{\geq0}$.
	\end{enumerate}
	Note that $H_a$ is realized as the bundle combination $\I(\te\th)+a\I(\tf\tg)$ for $a\geq0$, hence uniqueness of Theorem~\ref{thm:bundle} shows that no self-compatible route returns $H_a$ with the exception of $\I(\te\th)=H_0$.
	Hence, every self-compatible route $p\neq\te\th$ returns through its indicator vector a unit integer nonnegative flow $\I(p)$ of the form $B_a$ or $E_a$. 
	Since $\tf\tg$ is compatible only with the trail $\te\th$, it follows that $B_a$ and $H_a$ may not be realized as a bundle combination using a band, so by Lemma~\ref{lem:ind-nob} there exist self-compatible routes $p_{B_a}$ and $p_{E_a}$ with $\I(p_{B_a})=B_a$ and $\I(p_{E_a})=E_a$ for every $a\in\mathbb Z_{\geq0}$.
	We have shown that the complete list of self-compatible trails is
	\[
		\{\te\th\}\cup\{\tf\tg\}\cup\{p_{B_a}\ :\ a\in\mathbb Z_{\geq0}\}\cup\{p_{E_a}\ :\ a\in\mathbb Z_{\geq0}\}.
	\]
	We have already shown that $\{\te\th,\tf\tg\}$ is a maximal bundle whose bundle simplihedron is the dotted blue ray bisecting $\F_1(G,\sim)$ into its top and bottom half.
	(All rational points of) the top half must covered by maximal bundle simplihedra of cliques, all of which must contain $\I(\te\th)$. The only possibility, then, is for $\{\te\th,p_{E_a},p_{E_{a+1}}\}$ to be a maximal clique for every $a\in\mathbb Z_{\geq0}$.
	Similarly, $\{\te\th,p_{B_a},p_{B_{a+1}}\}$ is a maximal clique for every $a\in\mathbb Z_{\geq0}$. This completes the proof that the maximal bundles of $(G,\sim,\R)$ are
	\[
		\{\te\th,\tf\tg\}\cup\left\{\{\te\th,p_{E_a},p_{E_{a+1}}\}\ :\ a\in\mathbb Z_{\geq0}\right\}\cup\left\{\{\te\th,p_{B_a},p_{B_{a+1}}\}\ :\ a\in\mathbb Z_{\geq0}\right\},
	\]
	showing that the bundle subdivision of $\F_1(G,\sim)$ induced by $\R$ is as pictured in Figure~\ref{fig:bowtie}. 
\end{example}

\begin{remk}
	Even in the gentle case, the bundle subdivision of the turbulence polyhedron of a turbulence chart may fail to be complete; see~\cite[Example 5.22 and \S10]{BERG}.
	In~\cite{BERG}, this was addressed by the addition of vortex dissections of turbulence polyhedra, which are complete but may fail the strong intersection property of Definition~\ref{defn:subd} and may have cells which are not simplicial. Cells of a vortex dissection are indexed by certain cliques called band-stable cliques. In fact, the theory of vortex dissections may be extended to general turbulence charts, but we omit this theory for the sake of space.
\end{remk}

\section{Specialization to framed directed graphs and gentle algebras}
\label{sec:specialization}

In this section, we reiterate that our results on framed turbulence charts generalize known results simultaneously in the setting of framed DAGs and gentle algebras. Moreover, we show that our results imply new results in the setting of framed directed graphs which may have cycles.

\subsection{Gentle algebras and framed DAGs}

Recall Definition~\ref{defn:genttochart} and Proposition~\ref{prop:gentandchart}: for a gentle algebra $\L$, one obtains a gentle framed turbulence chart $(G_\tL,\sim_\tL,\R_\tL)$ with $\F_1(\tL)=\F_1(G_\tL,\sim_\tL)$ and appropriate bijections on trails.
Moreover, it is immediate that the bundle subdivision of $(G_\tL,\sim_\tL,\R_\tL)$ is the same as the bundle subdivision of $\tL$.
Hence, our theory of presentations and bundle subdivisions of framed turbulence charts generalizes the theory on fringed algebras developed in~\cite{BERG}.

Similarly, recall Definition~\ref{defn:chart-from-dg-} and Proposition~\ref{prop:dg-to-chart}: given a framed DAG $\Gamma=(G,\R)$ one obtains a directable framed turbulence chart $(G_\Gamma,\sim_\Gamma,\R_\Gamma)$ with $\F_1(\Gamma)=\F_1(G_\Gamma,\sim_\Gamma)$ and appropriate bijections on trails.
Because $(G_\Gamma,\sim_\Gamma)$ is acyclic, the clique triangulation and bundle subdivision of $(G_\Gamma,\sim_\Gamma,\R_\Gamma)$ agree; it is immediate from the definitions that these are the same as the framing triangulation of $\Gamma$. Hence, our theory of presentations and bundle subdivisions generalizes the theory of framing triangulations of framed DAGs.

For an example of the above two paragraphs, recall Figure~\ref{fig:turbchartintro} showing a directable gentle framed turbulence chart along with a corresponding framed DAG and gentle algebra. Figures~\ref{fig:sqdag},~\ref{fig:sqturb}, and~\ref{fig:sqgent} show the resulting triangulations from the perspective of framed DAGs, turbulence charts, and gentle algebras.

This justifies our claim that framed turbulence charts are the common generalization of the theories of fringed algebras and framed DAGs, e.g., Figure~\ref{fig:CUBEOFGEN}.

\subsection{Framed directed graphs}

Recall Section~\ref{ssec:directable}, where we connected framed directed graphs to framed directable turbulence charts. We now use this connection to specialize definitions and results from the setting of framed turbulence charts to framed directed graphs.

\begin{lemma}
	The contraction, degree-reduction, steepening, filling, and band-correction moves of Section~\ref{sec:ge} all preserve directability of a framed turbulence chart. 
\end{lemma}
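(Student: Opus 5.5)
The plan is a direct case check. Each move changes the chart only locally, so if $(G,\sim,\R)$ is directable with a chosen orientation $o$ (one separating heads from tails at every internal vertex), I will extend or restrict $o$ to an orientation $o'$ of the new chart. Only the vertices touched by the move need to be checked, since every other vertex keeps its half-edges and its relation $\sim$. Framings play no role, because directability depends only on $(G,\sim)$. Throughout I use the reformulation that an orientation is valid at $v$ exactly when one class of $\sim_v$ consists of head half-edges and the other of tail half-edges.

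First come the moves that create new vertices or edges.
\begin{enumerate}
\item \textbf{Degree-reduction} at $v$ splits one class $\{h_1,\dots,h_m\}$, which under $o$ consists entirely of, say, heads, into the parts at $v'$ and at $v''$. I orient the decontraction edge $\delta$ from $v''$ to $v'$.
\begin{itemize}
\item At $v'$, the half-edge $(\delta,v')$ is a head in the class $\{h'_1,\dots,h'_a\}$ of heads, and the other class still consists of tails.
\item At $v''$, the half-edge $(\delta,v'')$ is a tail forming its own class, and the other class consists of heads.
\end{itemize}
\item \textbf{Steepening} at $e$, with $e$ oriented $v_1\to v_2$: I orient $v_1\to v'_e\to v_2$. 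The new vertex $v'_e$ then has one head and one tail in separate classes, and $v_1,v_2$ see the same heads and tails as before.
\item \textbf{Filling} at $v$: the new fringe edge's half-edge at $v$ joins the class of $h_1$, so I orient it to match $h_1$ (toward $v$ if $h_1$ is a head, away if a tail). The fringe vertex imposes no condition.
\item \textbf{Band-correction} replaces $e_1$ by $f_1f_2f_3$; I orient these consistently with $\te_1$'s orientation under $o$, as in steepening. The subsequent filling steps then preserve directability by the previous case.
\end{enumerate}
The only thing to note in the band-correction case is that the framing choice making $\tf_2$ descending is irrelevant to $o'$.

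For \textbf{contraction} at an idle edge $\alpha$ from $v_1$ (lonely half-edge) to $v_2$, I take the restriction of $o$ to the remaining edges; only the merged vertex $v'_1=v'_2$ needs checking. Say $(\alpha,v_1)$ is a head, so that $\alpha$ is oriented $v_2\to v_1$.
\begin{itemize}
\item At $v_1$, the class opposite the lonely $(\alpha,v_1)$ consists of tails $f_j$.
\item At $v_2$, the class containing $(\alpha,v_2)$ consists of tails $g_j$, and the other class consists of heads $h_j$.
\end{itemize}
The merged class $\{g_j\}_{j\neq a}\cup\{f_j\}$ therefore consists entirely of tails, and $\{h_j\}$ of heads, so the restricted orientation is valid. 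The case where $(\alpha,v_1)$ is a tail is symmetric.

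The main, mild, obstacle is bookkeeping of which side $(\alpha,v_2)$ lies on in the contraction, and the loop case in filling. I expect no real difficulty: the definition of the contraction places $(\alpha,v_2)$ in the class of the $g_j$, which is exactly what makes the merged class consistent, so the whole lemma reduces to these sign checks.
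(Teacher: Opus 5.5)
Your proof is correct, but it is organized differently from the paper's. The paper analyzes only two operations: contracting an idle edge and deleting a fringe edge. It asserts that each one both preserves and reflects directability, i.e.\ the result is directable if and only if the original is. The other four moves then follow from Remarks~\ref{remk:undocont}, \ref{remk:undosteep}, \ref{remk:undocycle}, together with the observation that filling is undone by deleting the added fringe edges, since every move is inverted by deletions and contractions.

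For, say, a degree-reduction $X\mapsto X'$, that deduction uses the \emph{reflecting} direction: if the contraction $X$ of $X'$ is directable, then so is $X'$. The paper leaves this direction as immediate. Your contraction check proves the other, preserving direction, which is the one the paper needs only for contraction itself. For every other move you write down an explicit orientation of each new edge, so you never need the converse for contraction or deletion.

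The paper's route is shorter, reuses the undo remarks already established in Section~\ref{sec:ge}, and gives the stronger two-sided statement. Yours costs more case-checking but is self-contained and constructive. It carries an explicit orientation, and hence a framed directed graph, through the whole gentle-envelope construction, which is exactly what Theorem~\ref{thm:faces-of-gentle-dg} relies on.
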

\begin{proof}
	It is immediate to see that if $(G,\sim,\R)$ is a directable framed turbulence chart and $(G',\sim',\R')$ is obtained from $(G,\sim,\R)$ by contracting an edge or deleting a fringe edge, then $(G',\sim',\R')$ is directable if and only if $(G,\sim,\R)$ is directable. In particular, this shows that contractions preserve directability.
	The result for degree-reduction, steepening, filling, and band-correction moves follows because all of these moves are undone by a combination of deleting fringe edges and contracting.
\end{proof}

The above lemma implies that the process outlined in Section~\ref{sec:ge} to find a gentle envelope of a turbulence chart specializes without issue to framed directed graphs (see Remark~\ref{remk:wilt}). This gives us the following variant of Theorem~\ref{prop:face-of-gentle}:

\begin{thm}\label{thm:faces-of-gentle-dg}
	If $(G,\R)$ is a framed directed graph, then there exists a gentle framed directed graph $(G',\R')$ with a set $W$ of source/sink edges such that deleting all edges of $W$ from $(G',\R')$ and performing some contraction steps yields $(G,\R)$.
\end{thm}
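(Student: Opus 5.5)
The plan is to transport the statement through the correspondence of Proposition~\ref{prop:dg-to-chart}, apply Theorem~\ref{prop:face-of-gentle} together with the preceding lemma on directability, and then pull everything back to framed directed graphs. First, if $(G,\R)$ is not convenient, separate its sources and sinks to obtain a convenient framed directed graph. This changes neither the flow polyhedron nor the bundle complex, and the separation can be undone at the end by identifying fringe vertices; alternatively, one can simply regard the convenient version as the intended output. Let $\Gamma=(G,\R)$ be this convenient framed directed graph, and pass to the directable framed turbulence chart $(G_\Gamma,\sim_\Gamma,\R_\Gamma)$ of Definition~\ref{defn:chart-from-dg-}.

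Next, run the gentle envelope construction of Section~\ref{sec:ge} on $(G_\Gamma,\sim_\Gamma,\R_\Gamma)$. This means applying, in order, degree-reductions, steepenings, fillings and band-corrections, which yields a gentle chart $(G',\sim',\R')$ together with a set $W$ of fringe edges added during filling steps. By the preceding lemma each move preserves directability, so $(G',\sim',\R')$ is directable. Moreover, we may choose the orientation of $G'$ so that it extends the given orientation of $G$. Indeed, after deleting $W$ and contracting we recover $G_\Gamma$ exactly, and each move is reversed by deletion and contraction steps, so orientations can be propagated move by move. In a degree-reduction the new edge $\delta$ is oriented so that the classes at $v'$ and $v''$ split into heads and tails consistently. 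In a steepening the two new edges inherit the orientation of $e$. A filling edge joins the class of $h_1$, so it takes the same head/tail role. In a band-correction, $f_1,f_2,f_3$ inherit the orientation of $e_1$.

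Now use Proposition~\ref{prop:dg-to-chart}. The oriented version of $(G',\sim',\R')$ is a convenient framed directed graph $\Gamma'$ whose framing is obtained by reversing the orders on outgoing classes. Since the chart is gentle and directable, the surjectivity statement in Proposition~\ref{prop:dg-to-chart} guarantees that some convenient gentle framed directed graph maps to it. We must make sure that our chosen orientation gives one of those preimages. A connected chart has exactly two preimages, related by global reversal, so on each component our orientation either is a gentle preimage or is the reverse of one. I expect this to be the main obstacle. We need $\Gamma'$ itself to satisfy Definition~\ref{defn:amply-framed-dg}, namely fullness, no source-to-sink edges, the consistent labelling $\psi$, and the condition that no oriented cycle is monolabelled.

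The way to handle this is to check those conditions directly. Fullness of $G'$ as a turbulence chart gives two incoming and two outgoing edges at every internal vertex. Condition~\eqref{gc1.5} of Definition~\ref{defn:gentchart} rules out source-to-sink edges. Steepness of every edge means that with the outgoing orders reversed, each edge's tail half-edge in the outgoing order and head half-edge in the incoming order receive the same label; this gives $\psi$. Finally, the no-steep-band condition~\eqref{gc3} translates into the requirement that every oriented cycle uses both labels. Since all of these conditions are invariant under global reversal, the orientation question disappears.

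It remains to note that the edges of $W$ are fringe edges of the chart, hence source or sink edges of $\Gamma'$. Deleting $W$ and performing the contractions, interpreted in the directed setting, recovers $\Gamma$, and then $(G,\R)$ after re-identifying the sources and sinks if necessary.
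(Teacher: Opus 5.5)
Your proposal is correct and follows the paper's argument. Like the paper, you pass to the directable chart, run the gentle-envelope moves (which preserve directability by the preceding lemma), and pull back through Proposition~\ref{prop:dg-to-chart}; your extra bookkeeping, namely propagating the orientation of $G$ through each move and checking the gentle directed-graph conditions directly (they are invariant under global reversal), only makes explicit what the paper summarizes as the construction ``specializing without issue.''
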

We call $(G',\R',W)$ a \emph{gentle envelope} of $(G,\R)$.
One may now prove the following exactly as we proved Corollary~\ref{cor:face-of-gentle}:
\begin{cor}\label{cor:face-of-gentle-dag}
	Up to unimodular equivalence, a flow polyhedron is precisely a face of a gentle flow polyhedron.
\end{cor}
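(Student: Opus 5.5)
The plan is to copy the proof of Corollary~\ref{cor:face-of-gentle}, using Theorem~\ref{thm:faces-of-gentle-dg} in place of Theorem~\ref{prop:face-of-gentle} and moving between directed graphs, turbulence charts and fringed algebras by Propositions~\ref{prop:dg-to-chart}, \ref{prop:gentframe} and~\ref{prop:gentandchart}. Both inclusions have to be checked: every flow polyhedron is unimodularly equivalent to a face of a gentle flow polyhedron, and every face of a gentle flow polyhedron is a flow polyhedron.

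\textbf{Every flow polyhedron is a face.} Let $(G,\R)$ be a framed directed graph; any framing will do, and we may assume $G$ is convenient since separating sources and sinks does not change $\F_1(G)$. Theorem~\ref{thm:faces-of-gentle-dg} gives a gentle envelope $(G',\R',W)$. Deleting the source/sink edges in $W$ turns $\F_1(G')$ into $\{F\in\F_1(G') : F(e)=0\ \forall e\in W\}$. Each subsequent contraction is a unimodular equivalence that preserves flow polyhedra, by Lemma~\ref{lem:contractchart} applied to the associated turbulence charts via Proposition~\ref{prop:dg-to-chart}. Hence $\F_1(G)$ is unimodularly equivalent to this zero set. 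To see that the zero set is a face, pass to the paired fringed algebra $\tL_{\Gamma'}$ of Proposition~\ref{prop:gentframe}. There $\F_1(G')=\F_1(\tL_{\Gamma'})$, with edges identified with arrows, so the zero set is exactly $Q_W$, which is a face by Lemma~\ref{lem:gentface}. More elementarily, it is cut out by the valid inequalities $F(e)\ge 0$ for $e\in W$ holding with equality.

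\textbf{Every face is a flow polyhedron.} Take a gentle framed directed graph $\Gamma$ and a face of $\F_1(\Gamma)$. By Proposition~\ref{prop:gentframe} and Lemma~\ref{lem:gentface}, the face equals $Q_W$ for some set $W$ of edges. In the directed setting this is also immediate, because the defining inequalities of $\F_1(\Gamma)$ are just the coordinate nonnegativities. Deleting the edges of $W$ from $\Gamma$ gives a directed graph whose unit flows are precisely the flows on $\Gamma$ that vanish on $W$. Two details need care:
\begin{enumerate}
\item Deletion may create isolated vertices, which we discard.
\item Deletion may turn internal vertices into sources or sinks. Conservation of flow at a vertex that has lost all of its in-edges (say) forces every out-edge to carry zero flow, so that vertex should not be reclassified as a new source.
\end{enumerate}

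\textbf{Main obstacle.} The second detail is the only delicate step. The fix is to delete, iteratively, every edge that conservation forces to be zero, and add those edges to $W$; this leaves $Q_W$ unchanged. After this saturation, every remaining vertex that was internal in $\Gamma$ still has both an in-edge and an out-edge. The fringe vertices of the new graph are then exactly the original ones, the unit normalization is unchanged, and the deleted graph's flow polyhedron is literally $Q_W$.
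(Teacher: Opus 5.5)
Your proposal is correct and is essentially the paper's argument. The paper proves this corollary ``exactly as'' Corollary~\ref{cor:face-of-gentle}: the gentle envelope of Theorem~\ref{thm:faces-of-gentle-dg} gives one inclusion, and Lemma~\ref{lem:gentface} plus deleting the edges of $W$ gives the other. Your saturation step spells out a detail the paper's ``deleting these edges'' leaves implicit: without it, a vertex that loses all its in-edges would become a spurious source, and the deleted graph's polyhedron could be strictly larger than $Q_W$. In the first inclusion no such issue arises, because each edge of $W$ was attached during filling to a class that already contained another half-edge.
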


We now translate our presentation result Theorem~\ref{thm:pres-general} to framed directed graphs. Recall Definitions~\ref{defn:elroute} and~\ref{defn:elband} of elementary routes and bands of a framed directed graph, and note that lollipops and barbells are impossible in a directable turbulence chart. Then, defining a \emph{simple} route or band of a directed graph to be one which does not use the same vertex multiple times, we get the following translation of Theorem~\ref{thm:pres-general} through Proposition~\ref{prop:dg-to-chart}:

\begin{thm}[{Theorem~\ref{thm:pres-general}}]
	Let $(G,\R)$ be a framed directed graph. Then $p\mapsto\I(p)$ is a bijection from elementary routes of $(G,\R)$ to vertices of $\F_1(G)$, and $B\mapsto\I(B)$ is a bijection from self-compatible elementary bands of $(G,\R)$ to the extremal rays of $\F_1(G)$.
\end{thm}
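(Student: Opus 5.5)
The plan is to transport Theorem~\ref{thm:pres-general} through the correspondence of Proposition~\ref{prop:dg-to-chart}, with no new geometry. Given a framed directed graph $(G,\R)$, first separate sources and sinks to make it convenient. This step changes neither the flow polyhedron nor the routes, bands, indicator vectors, or compatibility, because a route still begins at a source edge and ends at a sink edge. Then apply Definition~\ref{defn:chart-from-dg-} to obtain the directable framed turbulence chart $(G_\Gamma,\sim_\Gamma,\R_\Gamma)$. Proposition~\ref{prop:dg-to-chart} gives three things:
\begin{enumerate}
	\item the equality $\F_1(G)=\F_1(G_\Gamma,\sim_\Gamma)$ under the identification of edges;
	\item a bijection of routes and of bands that preserves indicator vectors;
	\item preservation of compatibility, so in particular self-compatibility of bands.
\end{enumerate}
Applying Theorem~\ref{thm:pres-general} to $(G_\Gamma,\sim_\Gamma,\R_\Gamma)$ then shows that $\I$ bijects elementary routes of the chart onto vertices, and self-compatible elementary bands onto extremal rays.

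It remains to check that elementary trails of the chart correspond exactly to simple trails of the directed graph. The key observation is that in a directable chart every string, read along the chosen orientation, is a directed walk. Crossing $\sim_v$ at $v$ means entering through an incoming half-edge and leaving through an outgoing one. So a string that begins along an edge in its forward direction traverses every edge forward, and if it begins backward it traverses every edge backward. Hence routes and bands of the chart are, up to inversion, exactly the directed routes and cycles of $G$, and simple trails correspond to simple trails.

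The main point is then to rule out lollipops and barbells in the directable case. A lollipop $s\sigma s^{-1}$ with $s$ nonempty would contain both $s$ and $s^{-1}$ as substrings of one string. That forces some edge to be traversed both forward and backward, contradicting the observation above. The same argument applies to a barbell $s\sigma_1s^{-1}\sigma_2$. If $s$ is lazy, the definitions say every vertex of $s$ appears twice, so the route or band revisits a vertex. I would treat this degenerate case explicitly: a lollipop with lazy $s$ would have to start and end at the same fringe vertex, which is impossible in a convenient graph. For a barbell with lazy $s$, I expect the paper's convention to exclude it from the non-simple case, or it reduces to a cycle through a repeated vertex whose two loops must both be directed. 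I would check that such a band is either non-simple but also non-elementary in both settings, or else matches consistently.

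Therefore elementary trails of the chart are exactly the simple trails of $(G,\R)$, and the statement follows.
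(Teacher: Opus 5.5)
Your reduction is the same as the paper's. You make the graph convenient, transport through Proposition~\ref{prop:dg-to-chart}, apply Theorem~\ref{thm:pres-general}, and use the fact that strings in a directable chart are consistently oriented. That rules out lollipops and barbells with nonempty $s$, and your lazy-$s$ lollipop argument is also fine. One small omission: Definition~\ref{defn:elroute} builds self-compatibility into ``elementary'', so identifying elementary routes with simple ones needs the remark that a simple trail is automatically self-compatible. This holds because such a trail passes each vertex once, through opposite $\sim_v$-classes. Hence no two of its substrings, or those of its inverse, arrive at a common vertex through distinct half-edges of one class.

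The genuine gap is the case you leave as ``I would check''. Neither outcome you anticipate is what the literal definition gives.
\begin{itemize}
\item The paper does allow $s=e_v$ in a barbell: the bowtie band $\tf\tg$ of Example~\ref{ex:bowtie-pres} is one.
\item So take a directed figure-eight $B=\sigma_1\sigma_2$, where $\sigma_1,\sigma_2$ are directed cycles meeting only at $v$. It satisfies Definition~\ref{defn:elband}: $v$ is used twice and every other vertex once. It is not simple.
\item Say $\sigma_k$ leaves $v$ along $o_k$ and returns along $i_k$. Then the only possible self-incompatibility of $B$ is the pair $(i_1o_2,\,i_2o_1)$ at $v$. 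With chart orders $i_1<i_2$ and $o_1<o_2$ at $v$, $B$ is self-compatible.
\item Yet $\I(B)=\I(\sigma_1)+\I(\sigma_2)$ is a sum of two non-proportional vortices, so it is not extremal.
\end{itemize}
So ``elementary $=$ simple'' cannot be established by checking: read literally, it is false. The same band would also contradict Theorem~\ref{thm:pres-general}, which you are citing; two directed loops at one vertex already suffice.

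The missing idea is the turnaround condition implicit in the shape $s\sigma_1s^{-1}\sigma_2$. Each $\sigma_k$ must leave and re-enter its endpoint of $s$ through the same $\sim$-class.
\begin{itemize}
\item For nonempty $s$ this is forced. Both ends of $\sigma_1$ must lie in the class opposite the last half-edge of $s$, and likewise for $\sigma_2$ at $t(s)$.
\item It holds in the bowtie: both half-edges of $f$ lie in one class and both of $g$ in the other.
\item A directed closed walk never turns around, since it leaves through an outgoing half-edge and re-enters through an incoming one.
\end{itemize}
Under this reading, your orientation argument also excludes lazy-$s$ barbells, and the proof closes. The paper's one-line remark that barbells cannot occur in a directable chart tacitly relies on this reading. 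You should state it explicitly rather than leave the case open.
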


Recall the definition of the bundle complex of a framed directed graph from Section~\ref{ssec:frameddg}.
We copy verbatim the following definitions from Definition~\ref{defn:bundley} but phrase them for framed directed graphs.
\begin{defn}\label{defn:bundley}
	Let $\bK=\K\cup \B$ be a bundle of a framed directed graph $(G,\R)$. A \emph{$\bK$-bundle combination} (of $(G,\R)$) is a linear combination of indicator vectors of trails of $(G,\R)$
	\[F=\sum_{p\in \bK}a_p\I(p)\]
	such that each $a_p$ is nonnegative. The \emph{strength} of the bundle combination is $\sum_{p\in \K}a_p$ (note we iterate only over routes $\K$, not all trails $\bK$). The bundle combination is \emph{unit} if its strength is 1. It is \emph{positive} if $a_p$ is positive for every $p\in \bK$.
	The \emph{bundle simplihedron} $\D_1(\bK)$ is the polyhedron of unit $\bK$-bundle combinations. The \emph{bundle cone} $\D_{\geq0}(\bK)$ is the cone of $\bK$-bundle combinations.
\end{defn}

Proposition~\ref{prop:dg-to-chart} now allows us to phrase Theorem~\ref{thm:bundle} and Corollary~\ref{cor:subd} for framed directed graphs:

\begin{thm}[{Theorem~\ref{thm:bundle}, Corollary~\ref{cor:subd}}]
	\label{thm:bundledag}
	Let $(G,\R)$ be a framed directed graph.
	Any nonnegative flow has at most one representation as a positive bundle combination.
	A dense subset of $F\in\F_{\geq0}(G)$, including all rational flows, are obtained as a bundle combination. If $F\in\F_{\geq0}(G)$ is an integer flow, then its bundle combination has integral coefficients.
	Consequently, the \emph{bundle subdivisions}
	\begin{align*}
		\mathcal S_{\geq0}(G,\R)&:=\{\D_{\geq0}(\bK)\ :\ \bK\text{ is a maximal bundle of }(G,\R)\} \text{ and} \\
		\mathcal S_1(G,\R)&:=\{\D_1(\bK)\ :\ \bK\text{ is a maximal bundle of }(G,\R)\text{ with at least one route}\} 
	\end{align*}
	are simplicial polyhedral subdivisions of $\F_{\geq0}(G)$ and $F_1(G)$, respectively, covering every rational point.
\end{thm}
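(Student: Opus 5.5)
The plan is to deduce Theorem~\ref{thm:bundledag} directly from Theorem~\ref{thm:bundle} and Corollary~\ref{cor:subd} by transport through Proposition~\ref{prop:dg-to-chart}. No new combinatorics is needed; the work is to check that every notion in the statement is carried exactly across the correspondence.

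First, I would reduce to the convenient case. A non-convenient framed directed graph $(G,\R)$ is converted to a convenient one by splitting each source and sink vertex into degree-one copies. This splitting does not change the edge set, $\F_{\geq0}(G)$, $\F_1(G)$, routes, bands, or compatibility, since compatibility only involves internal vertices. So assume $(G,\R)$ is convenient and let $(G_\Gamma,\sim_\Gamma,\R_\Gamma)$ be its framed turbulence chart from Definition~\ref{defn:chart-from-dg-}.

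Next, I would check that the identification of edges matches all the relevant structures. Proposition~\ref{prop:dg-to-chart} gives $\F_1(G)=\F_1(G_\Gamma,\sim_\Gamma)$ under this identification. The same argument gives $\F_{\geq0}(G)=\F_{\geq0}(G_\Gamma,\sim_\Gamma)$, because conservation at each internal vertex is the same linear equation: the in-edges form one class of $\sim_v$ and the out-edges the other. The strength normalizations also agree. Every route of a convenient directed graph uses exactly one source edge and one sink edge, so $\sum_{\text{source edges}}F(e)$ equals half the total fringe flow. Finally, Proposition~\ref{prop:dg-to-chart} bijects trails of $\Gamma$ with trails of the chart, preserving indicator vectors and pairwise compatibility. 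It therefore identifies bundles, maximal bundles, bundles containing at least one route, and (since routes map to routes and bands to bands) the strength of a bundle combination.

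With these identifications, a $\bK$-bundle combination on $(G,\R)$ is literally the same vector expression as the corresponding bundle combination on the chart. Uniqueness of positive representations, density including rational flows, and integrality of coefficients for integer flows then follow from Theorem~\ref{thm:bundle}. The sets $\D_1(\bK)$ and $\D_{\geq0}(\bK)$ coincide on both sides, so $\mathcal S_{\geq0}(G,\R)$ and $\mathcal S_1(G,\R)$ coincide with $\mathcal S_{\geq0}(G_\Gamma,\sim_\Gamma,\R_\Gamma)$ and $\mathcal S_1(G_\Gamma,\sim_\Gamma,\R_\Gamma)$. Corollary~\ref{cor:subd} then gives that they are simplicial subdivisions covering every rational point.

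The only step needing real care is compatibility of bands. Compatibility for directed graphs in Section~\ref{ssec:frameddg} is phrased as entering low and leaving high, while on charts it is leaving low on both sides. The reversal of outgoing orders in Definition~\ref{defn:chart-from-dg-} converts one into the other. For bands, the subpaths of all powers are matched, and orientation reversal of a chart band corresponds to reading the directed cycle backwards. I would verify this explicitly on a shared substring $s$ with four boundary edges, and otherwise rely on Proposition~\ref{prop:dg-to-chart}.
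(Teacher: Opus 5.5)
Your proposal is correct and follows essentially the same route as the paper. The paper's entire argument is to transport Theorem~\ref{thm:bundle} and Corollary~\ref{cor:subd} through the correspondence of Proposition~\ref{prop:dg-to-chart}, after first separating sources and sinks to make the graph convenient. Your check that reversing the outgoing orders turns ``enter low, leave high'' into ``leave low on both sides'' (including for bands read backwards) is exactly the content underlying that proposition.

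One small point: the match between the two unit normalizations is best justified by summing the conservation equations over all internal vertices, which gives total source flow equal to total sink flow. Looking at individual routes does not by itself establish it for an arbitrary flow.
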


Theorem~\ref{thm:bundledag} restricts to the framing triangulation result of Danilov, Karzanov, and Koshevoy~\cite{DKK} when $G$ has no cycles, and it restricts to a triangulation result of 
Abram, Bastidas, Dequ\^{e}ne, Morales, Park, and Thomas~\cite{UQAM} in the case of \emph{cyclic amply framed DAGs} (see Remark~\ref{remk:uqam1}).

\section{Examples Inspired by the Kronecker Quiver}
\label{sec:examples}

The framed turbulence charts of Figures~\ref{fig:trapezoid} and~\ref{fig:bowtie} have served as the running examples of this article, and their presentations and subdivisions were discussed in the previous section.
In the interest of providing more examples, we conclude this article with a few framed turbulence charts born of the Kronecker quiver and their subdivided turbulence polyhedra.
The Kronecker quiver is a classic example in the representation theory of algebras consisting of two parallel arrows between two vertices. Figure~\ref{fig:kronhor} shows the fringed algebra of (the gentle algebra of) the Kronecker quiver and the corresponding framed turbulence chart and framed directed graph.

\begin{figure}
	\centering
	\def\svgscale{.25}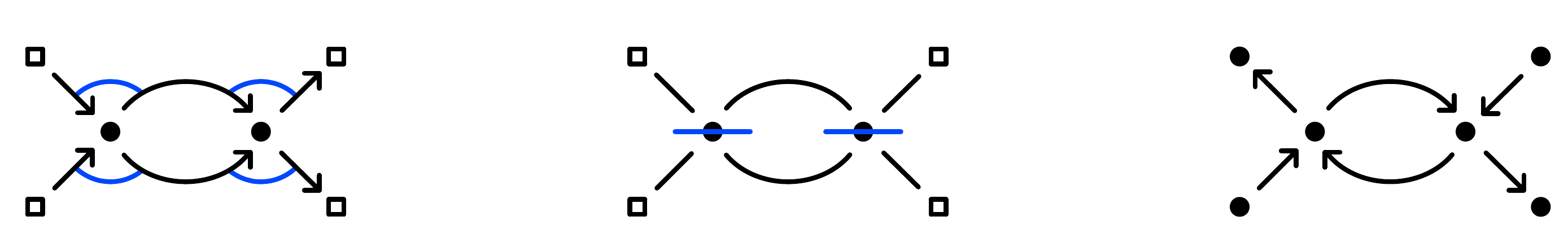
	\caption{The fringed algebra of the Kronecker quiver (left) with the corresponding framed turbulence chart (middle) and framed directed graph (right).}
	\label{fig:kronhor}
\end{figure}

\begin{example}\label{ex:kron1}
	The Kronecker framed turbulence chart of Figure~\ref{fig:kronfinal} is not drawn using the clockwise framing. Figure~\ref{fig:kronclock} redraws the same graph using the clockwise framing and portrays two maximal bundles.
	Consider the oriented edges $\te_1$ and $\tf_1$ to begin at fringed edges and consider the oriented edges $\te_3$ and $\tf_3$ to end at fringed edges. Consider $\te_2$ and $\tf_2$ oriented left to right in the drawing.

	Figure~\ref{fig:kronfinal} shows its three-dimensional turbulence polyhedron. The lattice points are labelled by the corresponding self-compatible route when possible.
	The simplex of the maximal clique in the middle of Figure~\ref{fig:kronclock} is highlighted and outlined in red in Figure~\ref{fig:kronclock}.

	The routes $p_e:=\te_1\te_2\te_3$ and $p_f:=\tf_1\tf_2\tf_3$ are exceptional (i.e., they are compatible with every trail).
	For $a\in\mathbb Z_{\geq0}$, define the self-compatible routes $l_a:=\te_1(\te_2\tf_2^{-1})^a\tf_1^{-1}$ and $r_a:=\te_3^{-1}(\te_2^{-1}\tf_2)^a\tf_3$. Then the self-compatible trails of $(G,\sim,\R)$ are
	\[
		\{p_e,p_f,\te_2\tf_2^{-1}\}\cup\{r_a\ :\ a\in\mathbb Z_{\geq0}\}\cup\{l_a\ :\ a\in\mathbb Z_{\geq0}\}.
	\]
	The elementary routes are $\{p_e,p_f,r_0,l_0\}$.
	There is only one band $\te_2\tf_2^{-1}$, which is elementary.
	The maximal bundles of $(G,\sim,\R)$ are:
	\[
		\{p_e,p_f,\te_2\tf_2^{-1}\}\cup
		\{p_e,p_f,r_0,l_0\}\cup
		\big\{\{p_e,p_f,r_a,r_{a+1}\}\ :\ a\in\mathbb Z_{\geq0}\big\}\cup
		\big\{\{p_e,p_f,l_a,l_{a+1}\}\ :\ a\in\mathbb Z_{\geq0}\big\}
	\]

	\begin{figure}
		\centering
		\def\svgscale{.25}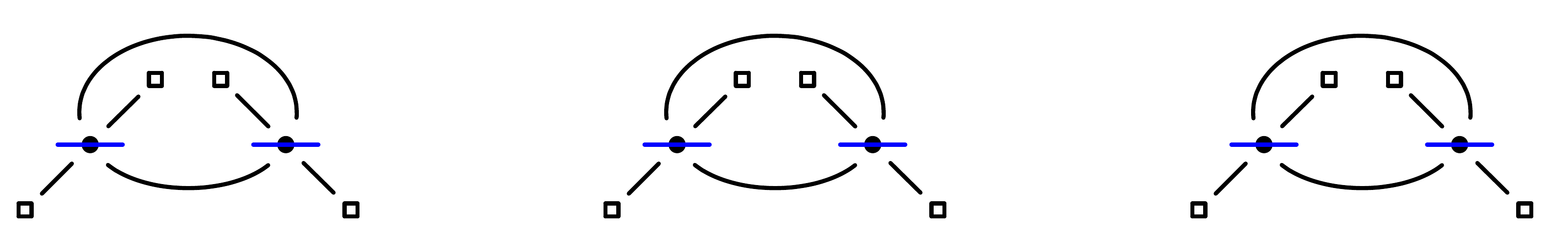
		\caption{The Kronecker framed turbulence chart drawn using the clockwise framing.}
		\label{fig:kronclock}
	\end{figure}
	\begin{figure}
		\centering
		\def\svgscale{.25}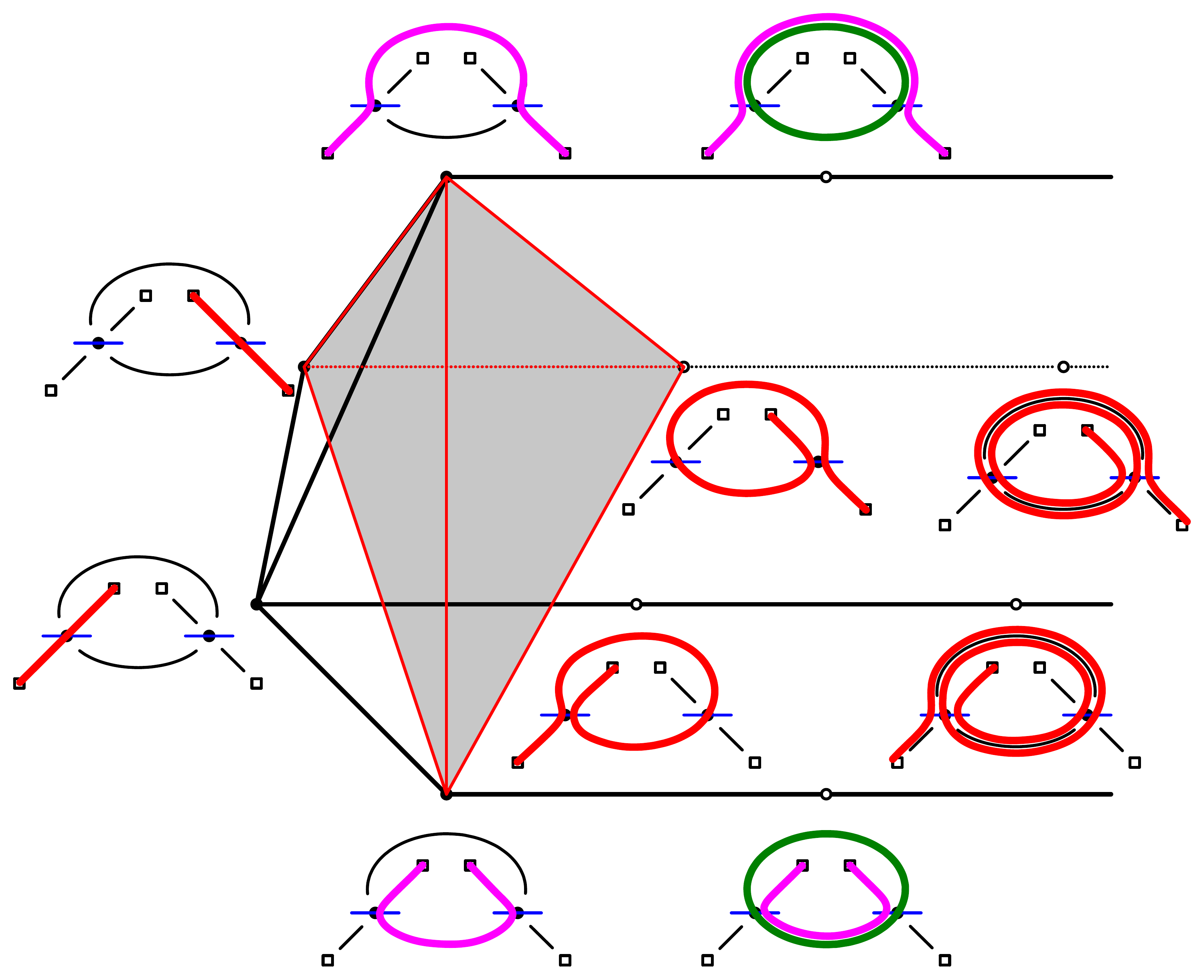
		\caption{The turbulence polyhedron of the Kronecker framed turbulence chart.}
		\label{fig:kronfinal}
	\end{figure}
\end{example}

\begin{example}\label{ex:face}
	Figure~\ref{fig:kronface} shows the framed turbulence chart $(G',\sim',\R')$ obtained by deleting the edges $e_1$ and $e_2$ from the Kronecker framed turbulence chart $(G,\sim,\R)$ of Figure~\ref{fig:kronclock}.
	The turbulence polyhedron is a ray, which is the face of $\F_1(G,\sim)$ obtained by zeroing out flow through the edges $e_1$ and $f_1$.
	Accordingly, the bundle subdivision of $(G',\sim',\R')$ is the bundle subdivision of $(G,\sim,\R)$ restricted to this face.
	The maximal bundles are
	\[
		\big\{e_2f_2^{-1}\big\}\cup\big\{\{r_a,r_{a+1}\}\ :\ a\geq0\big\},
	\]
	where the paths $r_a$ are defined as in Example~\ref{ex:kron1}.
	The maximal bundle $\{e_2f_2^{-1}\}$ contains no route and hence does not index a cell of the bundle subdivision on $\F_1(G,\sim)$ induced by $\R$. It will, however, index a cell of the bundle subdivision on the cone of nonnegative flows $\F_{\geq0}(G,\sim)$.
	\begin{figure}
		\centering
		\def\svgscale{.25}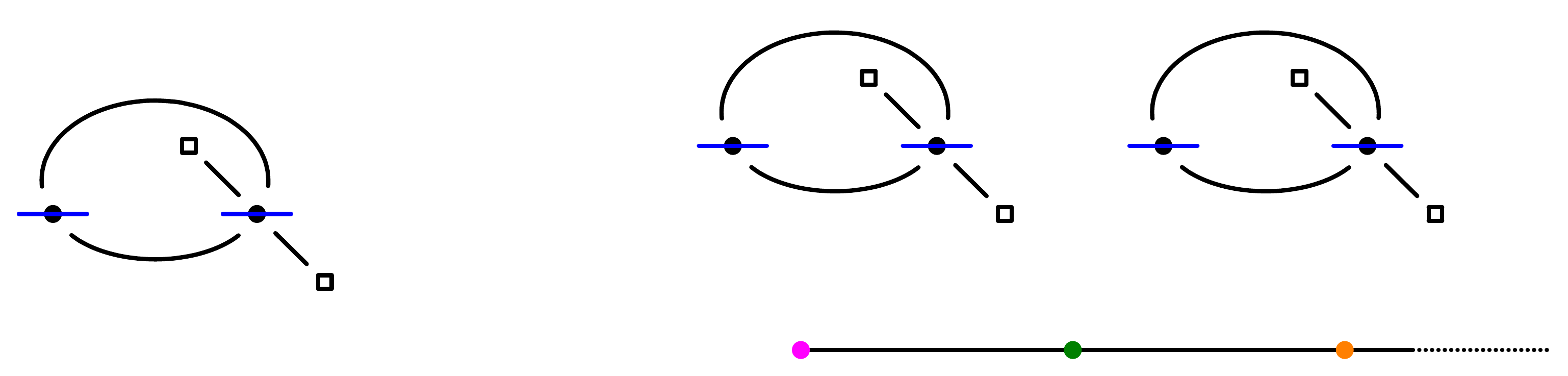
		\caption{A face of the Kronecker framed turbulence chart.}
		\label{fig:kronface}
	\end{figure}
\end{example}

We now give a framing on the Kronecker turbulence chart with a particularly simple bundle subdivision.

\begin{example}\label{ex:kron2}
	Figure~\ref{fig:kron-alt} shows the embedding of the Kronecker turbulence chart $(G,\sim)$ from Figure~\ref{fig:kronhor} equipped with its clockwise framing $\R'$.
	Since $(G,\sim)$ is directable, this turbulence chart may still be described by a framed directed graph with an oriented cycle, but $(G,\sim,\R')$ is not gentle so there is no fringed algebra.
	Consider all oriented edges $\te_i$ and $\tf_j$ left to right.
	With the new framing $\R'$, the band $e_2f_2^{-1}$ is compatible with every trail. The only self-compatible routes are $\{p_e,p_f,r_0,l_0\}$, which form two maximal bundles as pictured.
	We finally remark that the framed directed graph is an example of a directed graph with an ample framing all of whose cycles are monochromatic (i.e., all half edges have the same label in $\{1,2\}$). Such framings on directed graphs are called \emph{cyclic ample framings} in~\cite[Definition 6.7]{UQAM} and studied in greater depth.
	In particular, the amply framed directed graph whose internal subgraph is a monochromatic cycle of length $n$ gives a bundle subdivision whose dual graph is the 1-skeleton of the cyclohedron $\mathcal C_n$ encoding the centrally symmetric triangulations of a regular $2n$-gon~\cite[Proposition 7.6]{UQAM} (the Kronecker framed directed graph of Figure~\ref{fig:kron-alt} being the $n=2$ example).
	\begin{figure}
		\centering
		\def\svgscale{.25}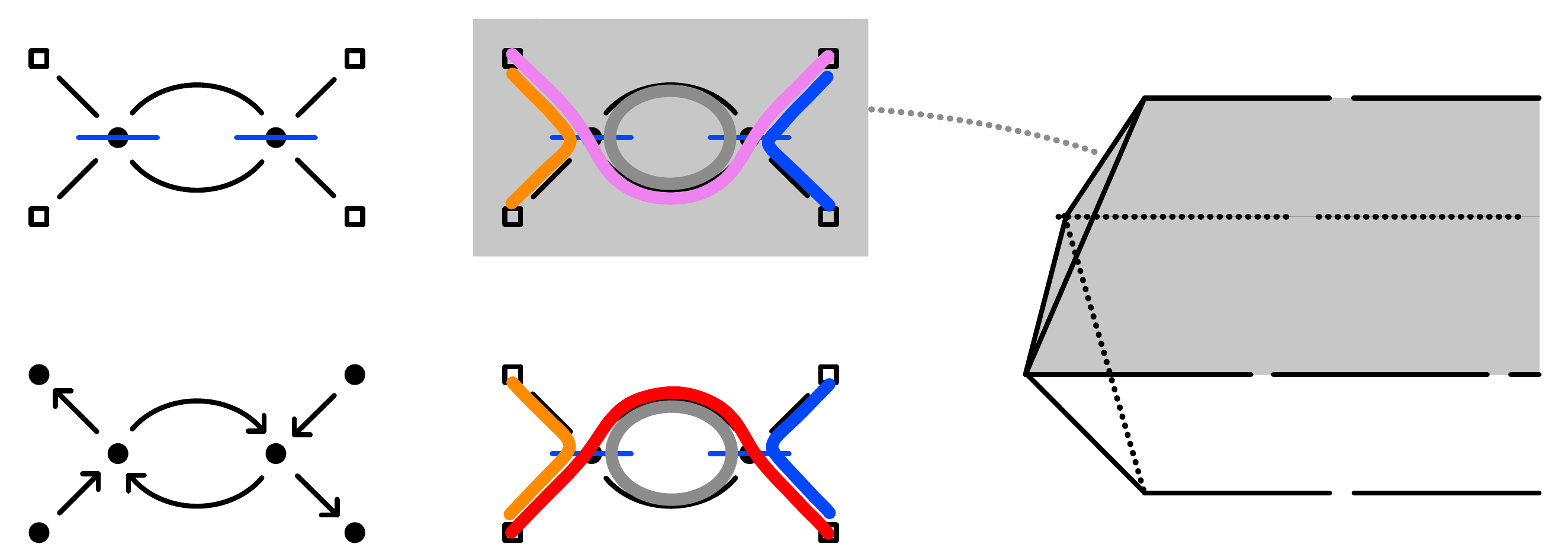
		\caption{The Kronecker turbulence chart with an altered framing.}
		\label{fig:kron-alt}
	\end{figure}
\end{example}

Examples~\ref{ex:kron1} and~\ref{ex:kron2} showed the bundle subdivisions from two different framings on the same Kronecker turbulence chart. Both examples featured exceptional routes, corresponding to points of the turbulence polyhedron which appear in every maximal bundle simplihedron.
The former example had an infinite number of simplices with one lower-dimensional bundle simplihedron using the band $\te_2\tf_2^{-1}$, while the latter example had only two bundles, both containing $\te_2\tf_2^{-1}$. We now give a framing on this Kronecker turbulence chart whose bundle subdivision is a true triangulation of the turbulence polyhedron into full-dimensional simplices because the band $\te_2\tf_2^{-1}$ is not compatible with any route.

\begin{example}\label{ex:kron3}
	Figure~\ref{fig:kronnoband} gives a final framing $\R''$ of the Kronecker turbulence chart.
	Observe that the unique band $\{\te_2\tf_2\}$ of this chart is not compatible with any route. Hence, all maximal bundle simplihedra of the bundle subdivision of $\F_1(G,\sim)$ induced by $\R''$ are simplices, meaning that this bundle subdivision is a unimodular triangulation of $\F_1(G,\sim)$.
	\begin{figure}
		\centering
		\def\svgscale{.25}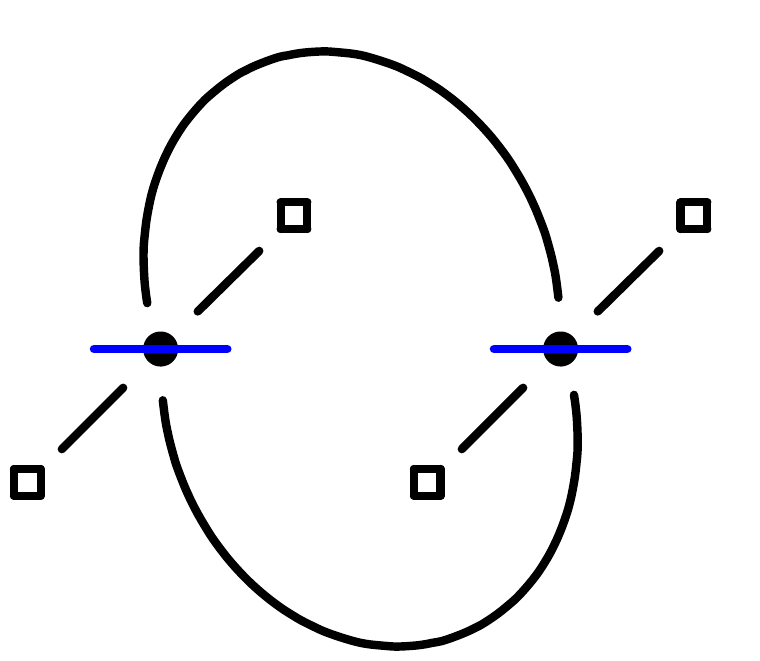
		\caption{A final framing of the Kronecker turbulence chart.}
		\label{fig:kronnoband}
	\end{figure}
\end{example}

\bibliographystyle{alphaurl}
\bibliography{biblio} 

\end{document}